\documentclass[11pt,a4paper,reqno]{amsart}
\usepackage{amsmath}
\usepackage{amsfonts}
\usepackage{amssymb}
\usepackage{amscd}
\usepackage{url}
\usepackage{enumerate}
\usepackage[pdftex,bookmarks=true]{hyperref}
\usepackage{graphicx}

\newcommand\R{{\mathbf{R}}}

\renewcommand\P{{\mathbf{P}}}
\newcommand\E{{\mathbf{E}}}

\newcommand\tr{{\operatorname{tr}}}

\newcommand\dist{{\operatorname{dist}}}
\newcommand\Z{{\mathbf{Z}}}

\newcommand\col{{\mathbf{c}}}
\newcommand\row{{\mathbf{r}}}


\newcommand\ep{\varepsilon}
\newcommand\eps{\varepsilon}

\newcommand\Ba{{\mathbf a}}
\newcommand\Bb{{\mathbf b}}
\newcommand\Bc{{\mathbf c}}

\newcommand\Be{{\mathbf e}}

\newcommand\Bu{{\mathbf u}}
\newcommand\Bv{{\mathbf v}}
\newcommand\Bw{{\mathbf w}}
\newcommand\Bx{{\mathbf x}}
\newcommand\By{{\mathbf y}}
\newcommand\Bz{{\mathbf z}}
%


\newcommand\CE{{\mathcal E}}
\newcommand\CF{{\mathcal F}}

\newcommand\CL{{\mathcal L}}

\newcommand\CN{{\mathcal N}}




\newcommand\LCD{\mathbf{LCD}}

\newcommand\Comp{\mathbf{Comp}}
\newcommand\Incomp{\mathbf{Incomp}}
\newcommand\supp{\mathbf{supp}}
\newcommand\spread{\mathbf{spread}}

\newcommand\LCDhat{\widehat{\mathbf{LCD}}}

\parindent = 0 pt
\parskip = 12 pt

\textwidth=6.5 in
\oddsidemargin=0in
\evensidemargin=0in

\theoremstyle{plain}
  \newtheorem{theorem}[subsection]{Theorem}

  \newtheorem{fact}[subsection]{Fact}
  \newtheorem{lemma}[subsection]{Lemma}
  \newtheorem{corollary}[subsection]{Corollary}

  \newtheorem{claim}[subsection]{Claim}

\theoremstyle{definition}
  \newtheorem{definition}[subsection]{Definition}

\begin{document}

\title[Distances in Wigner matrices]{Concentration of distances in Wigner matrices}

\author{Hoi H. Nguyen}
\address{Department of Mathematics, The Ohio State University, Columbus OH 43210}

\email{nguyen.1261@math.osu.edu}
\thanks{The author is supported by NSF grants DMS-1358648, DMS-1128155 and CCF-1412958. Any opinions, findings and conclusions or recommendations expressed in this material are those of the author and do not necessarily reflect the views of  NSF}

\subjclass[2010]{15A52, 15A63, 11B25}

\maketitle

\begin{abstract}  It is well-known that distances in random iid matrices are highly concentrated around their mean. In this note we extend this concentration phenomenon to Wigner matrices. Exponential bounds for the lower tail are also included. 
\end{abstract}

\section{Introduction}
Let $\xi$ be a real random variable of mean zero, variance one, and there exists a parameter $K_0>0$ such that for all $t$

$$\P(|\xi|\ge t)= O(\exp(-t^2/K_0)).$$

Let $A=(a_{ij})$ be a random matrix of size $N$ by $N$ where $a_{ij}$ are iid copies of $\xi$. For convenience, we denote by $\row_i(A)=(a_{i1},\dots, a_{in})$ the $i$-th row vector of $A$.

For a given $1\le n\le N-1$ let $B$ be the submatrix of $A$ formed by $\row_2(A),\dots,\row_{n+1}(A)$ and let $H\subset \R^N$ be the subspace generated by these row vectors. The following concentration result is well known (see for instance \cite[Lemma 43]{TVuniversality}, \cite[Corollary 2.1.19]{Tao-RMT} or \cite[Corollary 3.1]{RV-HW}).

\begin{theorem}\label{thm:distance:independent} With $m=N-n$, we have
$$\P\left(|\dist(\row_1,H) - \sqrt{m}| \ge t \right) \le  \exp(-t^2/K_0^4).$$
\end{theorem}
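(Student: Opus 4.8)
The plan is to condition on the submatrix $B$ and reduce the statement to a one-line application of the subgaussian concentration inequality of Rudelson--Vershynin. Once $B$ is fixed the subspace $H$ is determined, while $\row_1=\row_1(A)$ remains an independent vector with i.i.d.\ coordinates; writing $P$ for the orthogonal projection of $\R^N$ onto $H^\perp$ one has the exact identity $\dist(\row_1,H)=\|P\row_1\|_2$, so the quantity of interest is a fixed functional of the independent entries of $\row_1$. Almost surely $\row_2(A),\dots,\row_{n+1}(A)$ are linearly independent, so $\dim H=n$ and $P$ is a symmetric idempotent of rank $m=N-n$; hence $\|P\|_{\mathrm{HS}}^2=\tr P=m$ and $\|P\|_{\mathrm{op}}=1$. (If $\xi$ is atomic the event $\dim H<n$ has probability $e^{-\Omega(N)}$, which is harmlessly absorbed in the range of $t$ where the asserted bound is not vacuous, the complementary range being covered by the trivial bound $\dist(\row_1,H)\le\|\row_1\|_2$ together with concentration of the norm.)

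With these parameters the bound is an instance of \cite[Corollary 3.1]{RV-HW}: for a fixed matrix $A$ and a random vector $x$ with independent, mean zero, subgaussian coordinates of parameter $K_0$,
\[
\P\big(\,\big|\,\|Ax\|_2-\|A\|_{\mathrm{HS}}\,\big|\ge t\,\big)\ \le\ 2\exp\!\Big(-\,\frac{c\,t^2}{K_0^{4}\,\|A\|_{\mathrm{op}}^2}\Big).
\]
Applying this with $x=\row_1$ and $A=P$, and then taking expectations over $B$, yields the claim after absorbing the absolute constants into the exponent. I would also record the standard unpacking of this inequality, since it makes the source of the centering constant transparent: by the mean-zero, variance-one hypothesis, $\E[\dist(\row_1,H)^2\mid B]=\E[\row_1^{T}P\row_1]=\tr P=m$; the Hanson--Wright inequality applied to the quadratic form $\row_1^{T}P\row_1$ gives exponential concentration of $\dist(\row_1,H)^2$ about $m$ at scale $\min\{s^2/(K_0^{4}m),\ s/K_0^{2}\}$; and one converts this to concentration of $\dist(\row_1,H)$ about $\sqrt m$ by noting that $\{|\dist-\sqrt m|\ge t\}$ forces $|\dist^2-m|\ge t\sqrt m$ when $t\le\sqrt m$ (expand $(\sqrt m\pm t)^2$) and $\dist^2-m\ge t^2$ when $t>\sqrt m$, so that feeding in $s=t\sqrt m$, resp.\ $s=t^2$, reproduces the bound in every regime.

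Since the theorem is a repackaging of known concentration inequalities, there is no genuine conceptual obstacle; the points that need care are (i) the almost-sure (or overwhelmingly likely) identification $\rank P=m$, which pins down the centering value $\sqrt m$ — without it the distance would concentrate around $\sqrt{N-\dim H}$ instead — and (ii) the bookkeeping in the passage from $\dist^2$ to $\dist$ and in tracking powers of $K_0$ and absolute constants so as to land on the clean form in the statement. An alternative to invoking Hanson--Wright is to use that $x\mapsto\|Px\|_2$ is convex and $1$-Lipschitz: truncate the coordinates of $\row_1$ at level $\asymp\sqrt{K_0\log N}$, apply Talagrand's concentration inequality for convex Lipschitz functions on a product of bounded intervals, control the truncation error by the subgaussian tail, and compare the resulting median to $\sqrt m$ via the second-moment identity above; this route yields a bound of the same shape.
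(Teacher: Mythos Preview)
Your proposal is correct and matches the paper's own treatment: the paper does not prove this theorem in detail but simply cites it as well known, pointing to \cite[Corollary 3.1]{RV-HW} (the very inequality you invoke) and to Talagrand/Hanson--Wright concentration, and notes that the argument only uses that $\row_1$ is independent of $H$. Your write-up is essentially an unpacking of those citations, so there is nothing to compare.
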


One can justify Theorem \ref{thm:distance:independent} by applying concentration results of Talagrand or of Hanson-Wright. But all of these methods heavily rely on the fact that $\row_1$ is independent from $H$. In fact, Theorem \ref{thm:distance:independent} holds as long as $H$ is any deterministic non-degenerate subspace.

As Theorem \ref{thm:distance:independent} has found many applications (see for instance \cite{NgV-CLT, TVcir, TV-least, TVuniversality}) and as concentration is useful in Probability in general, it is natural to ask if Theorem \ref{thm:distance:independent} (or its variant) continues to hold when $\row_1$ and $H$ are correlated. We will address this issue for one of the simplest models, the symmetric Wigner ensembles. In our matrix model $A=(a_{ij})$, the upper diagonal entries $a_{ij}, i\le j$ are iid copies of $\xi$. 

\begin{theorem}[Main result, concentration of distance]\label{thm:distance:dependent} With the assumption as above, there exists a sufficiently small positive constant $\kappa$ depending on the subgaussian parameter $K_0$ of $\xi$ such that the following holds for any $m\ge \log^{\kappa^{-1}} N$
$$\P\left(|\dist(\row_1,H) - \sqrt{m}| \ge t  \right)= O\Big(N^{-\omega(1)} + N\exp(- \kappa t)\Big).$$
\end{theorem}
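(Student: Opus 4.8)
The plan is to condition on enough of $A$ that $\row_1$ becomes a genuine i.i.d.\ vector, and then to isolate the rank one effect of the symmetry. Let $C$ be the minor of $A$ on rows $2,\dots,n+1$ and columns $2,\dots,N$. Then $C$ is independent of $\row_1$, and conditionally on $C$ the entries $a_{11},\dots,a_{1N}$ are still i.i.d.\ copies of $\xi$. The structural point is that the $k$-th generator of $H$ is $\row_{k+1}=a_{1,k+1}\Be_1+(0,r_k)$, where $r_k\in\R^{N-1}$ is the $k$-th row of $C$ (placed in coordinates $2,\dots,N$), so that $H\subseteq H_0:=\R\Be_1+(\{0\}\times\operatorname{row}(C))$, an $(n+1)$-dimensional subspace measurable with respect to $C$. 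Hence $P_{H^\perp}=P_{H_0^\perp}+\hat\Bz\hat\Bz^\top$ for a unit vector $\hat\Bz$ spanning the line $H_0\ominus H$, and therefore
\[
\dist(\row_1,H)^2=\dist(\row_1,H_0)^2+\langle\row_1,\hat\Bz\rangle^2 .
\]
(This uses $\rank C=n$ and $\rank B=n$, which hold off an event of probability $N^{-\omega(1)}$ by standard invertibility estimates; on that event one argues directly.)

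For the first term, conditionally on $C$ the subspace $H_0$ is deterministic, of codimension $m-1$, and independent of $\row_1$, so Theorem~\ref{thm:distance:independent} gives $|\dist(\row_1,H_0)-\sqrt{m-1}|\le t$ off probability $\exp(-t^2/K_0^4)$. Since $\dist(\row_1,H)\ge\dist(\row_1,H_0)$, this already yields the lower tail in Theorem~\ref{thm:distance:dependent}.

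The heart of the matter is to show that the correction satisfies $\langle\row_1,\hat\Bz\rangle^2=\log^{O(1)}N$ with the required probability: once this is known, writing $\dist(\row_1,H)=\sqrt{(m-1+O(t\sqrt m))+\log^{O(1)}N}$ and linearising the square root — here the hypothesis $m\ge\log^{\kappa^{-1}}N$ enters, making $\log^{O(1)}N/\sqrt m=o(1)$ — returns $|\dist(\row_1,H)-\sqrt m|=O(t)$. To estimate $\langle\row_1,\hat\Bz\rangle$ I would make $\hat\Bz$ explicit: with $\bv=(a_{12},\dots,a_{1,n+1})^\top$ (the first column of $B$) one computes $\hat\Bz=\pm\|\Bz\|^{-1}\Bz$, $\Bz=\Be_1-(0,C^{+}\bv)$, $\|\Bz\|^2=1+\bv^\top(CC^\top)^{-1}\bv$. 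Splitting the tail $(a_{12},\dots,a_{1N})$ into its first $n$ coordinates $\bv$ and its last $m-1$ coordinates $\By$ (i.i.d.\ and independent of $\bv$ given $C$), and letting $E_n$ be the projection onto the first $n$ coordinates,
\[
\langle\row_1,\Bz\rangle=\bigl(a_{11}-\langle\By,\,(C^{+}\bv)_{\mathrm{last}}\rangle\bigr)-\bv^\top(E_nC^{+})\bv ,
\]
where $(C^{+}\bv)_{\mathrm{last}}$ denotes the last $m-1$ coordinates. Conditionally on $(C,\bv)$ the first bracket is centred subgaussian with variance parameter $O(K_0^2\|\Bz\|^2)$, hence contributes $O(t)$ to $\langle\row_1,\hat\Bz\rangle$ off probability $\exp(-ct^2/K_0^2)$. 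The last term is a quadratic form in the i.i.d.\ vector $\bv$ with the $C$-measurable matrix $E_nC^{+}$; by the Hanson–Wright inequality it concentrates near $\tr(E_nC^{+})$, and one needs $\|E_nC^{+}\|_{\mathrm{op}}\le\sigma_{\min}(C)^{-1}$, $\|E_nC^{+}\|_F\le\|C^{+}\|_F$, the cancellation estimate $|\tr(E_nC^{+})|\le\log^{O(1)}N\cdot\|C^{+}\|_F$, and the matching lower bound $\|\Bz\|^2\gtrsim\|C^{+}\|_F^2$ (a one-sided Hanson–Wright bound for $\bv^\top(CC^\top)^{-1}\bv$); these reduce to standard nonasymptotic singular value estimates for the rectangular random matrix $C$.

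The step I expect to be the main obstacle is precisely this bound on $\langle\row_1,\hat\Bz\rangle^2$, and within it the regime where $C$ is nearly square (equivalently $m$ comparatively small): there $\sigma_{\min}(C)$ may be small and $(CC^\top)^{-1}$ essentially rank one, so that both the lower bound on $\|\Bz\|$ and the cancellation in $\tr(E_nC^{+})$ become delicate, as does the tail of $\bv^\top(E_nC^{+})\bv$ for large $t$. This is exactly where the hypothesis $m\ge\log^{\kappa^{-1}}N$ and the weaker term $N\exp(-\kappa t)$ in the conclusion are spent: the $N^{-\omega(1)}$ term absorbs the polylogarithmic-deviation Hanson–Wright bounds, while $N\exp(-\kappa t)$ absorbs the remaining heavier exponential tails (and any union bound over candidate bad directions) produced in the near-square analysis. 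One then combines the three estimates in the displayed decomposition and reparametrises $t$ to conclude.
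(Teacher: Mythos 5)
Your set-up is sound and is in fact exactly the paper's Lemma~\ref{lemma:distance:removal} in geometric disguise: your $C$ is the paper's $P$, your $\bv$ its $\Bz$, your $H_0$ the span of $\Be_1$ and the rows of $P$, and your correction term $\langle \row_1,\hat\Bz\rangle^2$ equals $[a_{11}-\Bz^T(PP^T)^{-1}P{\Bx^{[1]}}^T]^2/(1+\Bz^T(PP^T)^{-1}\Bz)$. The decomposition and the lower-tail argument are therefore correct, and you have correctly isolated the decisive quantity $\tr(E_nC^{+})=\sum_{i\le n}((CC^T)^{-1}C)_{ii}$.

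The gap is the claim that the required estimates ``reduce to standard nonasymptotic singular value estimates for the rectangular random matrix $C$.'' They do not, on two counts. First, $C$ is not an i.i.d.\ rectangular matrix --- it contains the symmetric block of $A$ on rows and columns $2,\dots,n+1$ --- so even $\sigma_{\min}(C)\gtrsim \eps m/\sqrt N$ (Theorem~\ref{thm:least}) and $\|C^{+}\|_F^2=\tr((CC^T)^{-1})\asymp N/m$ (Corollary~\ref{cor:HSnorm}, which rests on eigenvalue rigidity, Theorem~\ref{theorem:concentration}) require bespoke arguments that occupy Sections 4--7 and the Appendix. Second, and more seriously, the cancellation bound $|\tr(E_nC^{+})|\ll\|C^{+}\|_F\asymp\sqrt{N/m}$ is precisely the statement whose proof is the main contribution of the paper, and no off-the-shelf estimate delivers it: each diagonal entry is only $O(N^{-1/2})$ individually (Corollary~\ref{cor:estimate:1}), so the triangle inequality gives $O(\sqrt N)$ --- a factor $\sqrt m$ too large --- and Cauchy--Schwarz against $\|C^{+}\|_F$ is worse still. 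The paper closes this by a two-step leave-one-out argument: removing a row and column changes the trace sum by only $O(\sqrt N/m)$ (Lemma~\ref{lemma:comparison:trace}), while the explicit formula for a diagonal entry (Lemma~\ref{lemma:formula:diagonal}) shows $((PP^T)^{-1}P)_{ii}\approx -T/D_i$ with $D_i\asymp N$; combining the two yields the self-consistency relation behind Lemma~\ref{lemma:PPP:diagsum} and hence $|T|=O(\sqrt N/m+t(\sqrt{N/m}+1))$. Without an argument of this kind your proof does not close, and note that this difficulty is present uniformly over the stated range of $m$, not only in the near-square regime you single out.
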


Note that our bound is weaker than Theorem \ref{thm:distance:independent} mainly due to the use of other spectral concentration results (Lemma \ref{lemma:manyvalues'} and Theorem \ref{theorem:concentration}). Theorem \ref{thm:distance:dependent} also implies identical control for the distance from $\row_1$ to other $n$ rows (not necessarily consecutive). We will be also giving tail bounds throughout the proof (see for instance Theorem \ref{theorem:distance:problem} and \ref{theorem:distance:problem'}). 

As we have mentioned, the main bottleneck of our problem is  that $\Bx$ and $H$ are not independent. Roughly speaking, one might guess that the distance of $\row_1=(a_{11},\dots, a_{1n})$ to $H$ is close to the distance from the truncated vector $(a_{12},\dots,a_{1n})$  to the subspace in $\R^{N-1}$ generated by the corresponding truncated row vectors of $B$. The main problem, however, is that the "error term" of this approximation involves a few non-standard statistics of the matrix $P$ which is obtained from $B$ by removing its first column. Notably, we have to resolve the following two obstacles

\begin{enumerate}
\item the operator norm $\|(PP^T)^{-1}\|_2$ must be under control;
\vskip .1in
 \item the sum $\sum_{1\le i\le n}((PP^T)^{-1}P)_{ii}$ must be small; 
\end{enumerate}

For bounding $\|(PP^T)^{-1}\|_2$ in (1), as it is the reciprocal of the least singular value $\sigma_n(P )$ of $P$, we need to find an efficient lower bound for $\sigma_n(P )$. 

When $n=N-1$, $P$ can be viewed as a random square symmetric matrix of size $N-1$ (and so let's pass to $A$). It is known via the work of  \cite{Ng-sym} and \cite{V} that in this case $A$ is non-singular with very high probability. More quantitatively, the result of Vershynin in \cite{V} shows  the following.

\begin{theorem}\label{thm:singularity} There exists a positive constant $\kappa'$ such that the following holds. Let $\sigma_N(A)$ be the smallest singular value of $A$, then for any $\eps<1$,
$$\P(\sigma_N(A) \le \kappa' \eps N^{-1/2}) =O(\eps^{1/8} + e^{-N^{\kappa'}}).$$
\end{theorem}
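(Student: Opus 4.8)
The plan is to follow the geometric invertibility method of Rudelson--Vershynin, in the form developed by Vershynin for symmetric ensembles. Write $\sigma_N(A)=\inf_{x\in S^{N-1}}\|Ax\|_2$ and split the sphere $S^{N-1}$ into the set $\Comp(\delta,\rho)$ of \emph{compressible} unit vectors — those lying within Euclidean distance $\rho$ of a vector supported on at most $\delta N$ coordinates — and its complement $\Incomp(\delta,\rho)$, the \emph{incompressible} vectors, for small absolute constants $\delta,\rho>0$ to be chosen later. Then
$$\P\big(\sigma_N(A)\le\kappa'\eps N^{-1/2}\big)\le\P\Big(\inf_{x\in\Comp}\|Ax\|_2\le\kappa'\eps N^{-1/2}\Big)+\P\Big(\inf_{x\in\Incomp}\|Ax\|_2\le\kappa'\eps N^{-1/2}\Big),$$
and I would control the two contributions by completely different mechanisms.

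The compressible part is pure large deviation. First record $\|A\|_2=O(\sqrt N)$ off an event of probability $e^{-\Omega(N)}$ (a standard $\eps$-net bound from the subgaussian tail of $\xi$). Next fix a unit vector $x_0$ supported on a set $S$ with $|S|\le\delta N$: the coordinates $\{(Ax_0)_i:i\notin S\}$ involve pairwise disjoint entries of $A$, hence are jointly independent, each subgaussian with variance one, so a small-ball estimate for one such coordinate together with a Chernoff bound over $i\notin S$ gives $\|Ax_0\|_2\ge c_1\sqrt N$ except with probability $e^{-\Omega(N)}$. Covering the $\delta N$-sparse unit vectors by a net of cardinality $\binom{N}{\delta N}(3/\rho)^{\delta N}=e^{O(\delta\log(1/\delta)N)}$, choosing $\delta$ small enough (depending on $\rho$ and the small-ball constants) that this is dominated by $e^{-\Omega(N)}$, union-bounding, and then passing from the net to all of $\Comp(\delta,\rho)$ via $\|A\|_2=O(\sqrt N)$ and $\rho$ small, we obtain $\inf_{x\in\Comp}\|Ax\|_2\ge \tfrac12 c_1\sqrt N$ except with probability $e^{-\Omega(N)}$; since $\kappa'\eps N^{-1/2}\ll\sqrt N$ this is swallowed by the $e^{-N^{\kappa'}}$ term.

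For incompressible vectors I would invoke the \emph{invertibility via distance} principle: writing $A_k$ for the $k$-th column of $A$ and $H_k$ for the span of the remaining columns, the fact that every $x\in\Incomp(\delta,\rho)$ has a linear fraction of coordinates of order $N^{-1/2}$ yields
$$\P\Big(\inf_{x\in\Incomp}\|Ax\|_2\le\frac{\kappa'\eps}{\sqrt N}\Big)\le\frac{1}{\delta N}\sum_{k=1}^{N}\P\big(\dist(A_k,H_k)\le \kappa'\eps/\rho\big).$$
Fix $k$ and permute it to the last position, writing
$$A=\begin{pmatrix}A'&b\\ b^{T}&a\end{pmatrix},$$
with $A'$ the symmetric $(N-1)\times(N-1)$ minor obtained by deleting row and column $k$, $b\in\R^{N-1}$ the off-diagonal part of that row/column, and $a=a_{kk}$. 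The structural feature that makes the symmetric case tractable is that deleting \emph{both} row and column $k$ leaves $A'$ independent of the pair $(a,b)$, while $b$ still has i.i.d.\ $\xi$-entries. When $A'$ is invertible the vector $(-(A')^{-1}b,\,1)$ is normal to $H_k$, so
$$\dist(A_k,H_k)=\frac{\big|\,a-b^{T}(A')^{-1}b\,\big|}{\sqrt{1+\|(A')^{-1}b\|_2^{2}}}.$$
A crude lower bound $\sigma_{N-1}(A')\ge N^{-C}$ holding with probability $1-e^{-N^{c}}$ — supplied either by induction on $N$ or by the preliminary non-singularity estimates of \cite{Ng-sym,V} — gives the invertibility of $A'$ and, together with $\|b\|_2=O(\sqrt N)$ and a routine concentration bound for $\|(A')^{-1}b\|_2^2=b^{T}(A')^{-2}b$ conditionally on $A'$, a polynomial upper bound on the denominator. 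It then remains to prove a small-ball estimate, at scale $s\asymp\eps$, for the quadratic form $Q:=a-b^{T}(A')^{-1}b$.

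This last step is the heart of the matter and the one I expect to be the main obstacle. Conditioning on $A'$, we must bound $\P(|Q|\le s\mid A')$ where $Q=a-b^{T}Mb$ with $M:=(A')^{-1}$ a \emph{fixed} symmetric matrix, $a$ an independent copy of $\xi$, and $b$ having i.i.d.\ $\xi$-entries. A quadratic Littlewood--Offord / decoupling estimate in the spirit of Costello--Tao--Vu provides polynomial-in-$s$ anti-concentration for such a $Q$, \emph{provided} $M$ is spread out — concretely, that a linear fraction of the eigenvalues of $(A')^{-1}$ sit at a controlled scale; this is an anti-concentration property of $(A')^{-1}$ that one harvests from eigenvalue rigidity / delocalization for the Wigner minor $A'$, e.g.\ from two-sided control of $\tr((A')^{-2})$ together with the operator-norm bound on $A'$. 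Assembling the pieces, the $e^{-N^{\kappa'}}$ term of the conclusion absorbs all exceptional spectral, operator-norm and crude-invertibility events for $A$ and $A'$; the $\eps^{1/8}$ term is what survives the quadratic small-ball bound — the exponent being degraded from the linear Littlewood--Offord value by the decoupling and by accounting for atypical $b$ — after the $\tfrac1{\delta N}$-weighted union over $k$ (whose polynomial cost is absorbed by shrinking $\kappa'$); and choosing $\kappa'$ small relative to $\delta,\rho$, the constants $c_1,C,c$ above and the subgaussian parameter $K_0$ closes the argument. The genuinely delicate points — making the recursion in $N$ self-consistent at the level of exponentially small failure probabilities, and extracting the spread of $(A')^{-1}$ with such probabilities — are exactly where the work of \cite{V} is concentrated.
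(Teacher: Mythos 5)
The paper does not actually prove Theorem \ref{thm:singularity}: it is imported from Vershynin \cite{V} (with \cite{Ng-sym} as a precursor), so there is no internal proof to compare against. Your outline is in essence a faithful sketch of the strategy of the cited work: the compressible/incompressible decomposition, the net argument exploiting the joint independence of the coordinates of $Ax_0$ off the support of a sparse $x_0$, the invertibility-via-distance reduction, the identity $\dist(A_k,H_k)=|a-b^{T}(A')^{-1}b|/\sqrt{1+\|(A')^{-1}b\|_2^2}$ coming from the normal vector $(-(A')^{-1}b,1)$, and finally a quadratic small-ball estimate obtained by decoupling --- the last being, as you say, where essentially all the work lies and where the exponent $1/8$ is lost. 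It is worth contrasting this with the paper's own singular-value argument (Sections 4--7, proving the rectangular Theorem \ref{thm:least}): that proof shares your first two steps and the distance reduction, but avoids the quadratic form entirely, because deleting $d$ rows makes $B'\Bz$ independent of the target subspace, so the small-ball step reduces to the \emph{linear}, LCD-based Theorem \ref{theorem:RV:d} and yields the much stronger bound $\eps^{\delta m}$. That decoupling is unavailable in the square symmetric case, which is precisely why one must confront $a-b^{T}(A')^{-1}b$ and why the tail degrades to $\eps^{1/8}$. As a blind proposal your argument is structurally correct and correctly locates the crux; it is not self-contained at the quadratic Littlewood--Offord step (nor at the recursive control of $\sigma_{N-1}(A')$), but the paper itself offers no more, since it simply cites the result.
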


It is conjectured that the bound can be replaced by $O(\ep + e^{-\Theta(n)})$, but in any case these bounds show that we cannot hope for a good control on $\sigma_N^{-1}$. The situation becomes better when we truncate $A$ as the matrix becomes less singular. In fact this has been observed for quite a long time (see for instance \cite{GN}). Here we will show the following variant of a recent result by Rudelson and Vershynin from \cite{RV-rec}.

\begin{theorem}\label{thm:least}
There exist positive  constants $\delta,\kappa'$ such that the following holds  for any $\eps<1$

$$\P(\sigma_n(B) \le \kappa'\eps m N^{-1/2}) \le \eps^{\delta m} + e^{-N^{\kappa'}}.$$
\end{theorem}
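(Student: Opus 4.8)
The plan is to exploit the block structure of $B$. After permuting the $N$ columns of $B$ so that the $n$ columns indexed by the row-set $I=\{2,\dots,n+1\}$ come first, we may write $B=[\,W\ \ G\,]$, where $W$ is the $n\times n$ principal submatrix of $A$ on $I\times I$ — itself a Wigner matrix of size $n$ — and $G$ is the $n\times m$ block of $A$ on $I\times([N]\setminus I)$, whose entries are i.i.d.\ copies of $\xi$ and are independent of $W$. Since $\sigma_n(B)=\min_{x\in S^{n-1}}\|B^{T}x\|_2$ and $\|B^{T}x\|_2^2=\|Wx\|_2^2+\|G^{T}x\|_2^2$ on the unit sphere $S^{n-1}$ of $\R^n$, everything reduces to bounding this minimum below by $\kappa'\eps m/\sqrt N$. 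For $m$ below a suitable absolute constant this already follows from Theorem \ref{thm:singularity} applied to $W$ together with $\sigma_n(B)\ge\sigma_n(W)$ (choosing $\delta$ small enough that $\eps^{1/8}\le\eps^{\delta m}$), so we may assume $m$ is large. We split $S^{n-1}=\Comp(\delta_0,\rho_0)\cup\Incomp(\delta_0,\rho_0)$ with $\delta_0$ of order $m/(n\log N)$, and work on the event $\|B\|_2\le C\sqrt N$ (whose complement has probability $e^{-cN}$), so that $\rho$-nets transfer to $S^{n-1}$ at the cost of an additive term $C\sqrt N\rho$.

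Compressible vectors are controlled by the independent block alone. For a fixed $\delta_0n$-sparse unit vector $x$ the $m$ coordinates of $G^{T}x$ are i.i.d.\ linear forms of unit variance, so a tensorization/Chernoff estimate gives $\P(\|G^{T}x\|_2\le c_0\sqrt m)\le e^{-c_1 m}$; since a $\rho_0$-net of $\delta_0n$-sparse unit vectors has cardinality $e^{O(\delta_0 n\log N)}$ and $\delta_0 n\log N\ll m$, a union bound handles $\Comp(\delta_0,\rho_0)$ with room to spare (the additive loss $C\sqrt N\rho_0$ being negligible since $c_0\sqrt m\gg\kappa'\eps m/\sqrt N$).

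For incompressible $x$ the point is that $\|G^{T}x\|_2$ carries the factor $\eps^{\delta m}$ while $\|Wx\|_2$ carries the union bound. Any $x\in\Incomp(\delta_0,\rho_0)$ has at least $c\delta_0n$ coordinates of order $(\delta_0n)^{-1/2}$, so each coordinate of $G^{T}x$ obeys the Littlewood--Offord estimate $\sup_u\P(|(G^{T}x)_j-u|\le s)\le C\big(s+\LCD(x)^{-1}\big)$; tensorizing over the $m$ coordinates, $\P\big(\|G^{T}x\|_2\le s\sqrt m\big)\le\big(C(s+\LCD(x)^{-1})\big)^{m}$, and taking $s$ a small multiple of $\eps$ (so $s\sqrt m\gtrsim\kappa'\eps m/\sqrt N$) this is of the shape $\eps^{\delta m}$ as soon as $\LCD(x)^{-1}$ is dominated by $s$. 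If $m\gtrsim n$ a single $\rho$-net of $\Incomp(\delta_0,\rho_0)$ closes the argument at once. When $m\ll n$ it does not, and one must bring in the symmetric block through the Rudelson--Vershynin scheme for symmetric matrices: stratify $\Incomp(\delta_0,\rho_0)$ by the value of $\LCD(x)$; on the high-$\LCD$ strata the term $\LCD(x)^{-1}$ is dominated by $s$ and the cardinality of the $\LCD$-net is still small enough for the estimate above; on the low-$\LCD$ strata, where this linear anti-concentration is too weak, one estimates instead the probability that $\|Wx\|_2$ is small by the quadratic Littlewood--Offord inequality, exploiting that an off-diagonal block of the symmetric matrix $W$ and its transpose carry the same information, so that one may leave that pair free while conditioning on the two diagonal blocks. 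Theorem \ref{thm:singularity}, applied to $W$, supplies the crude invertibility bound needed for the extreme strata. Summing the geometrically decaying stratum bounds, and adding $e^{-cN}$ for the failure of the operator-norm and spectral events, yields the stated $\eps^{\delta m}+e^{-N^{\kappa'}}$.

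I expect the incompressible regime with $m$ small to be the main obstacle: there the gain $\eps^{\delta m}$ must be extracted entirely from the $m$ columns of $G$, so the linear anti-concentration of the coordinates of $G^{T}x$ has to be married to the quadratic anti-concentration of $\|Wx\|_2$ while keeping the $\LCD$-net cardinalities under control, and the quadratic estimates for the symmetric block are intrinsically lossier than the linear ones. A secondary difficulty is the range of $\eps$ bounded away from $0$, where $\kappa'\eps m/\sqrt N$ is comparable to the typical size of $\sigma_n(B)$, so that the bound becomes a moderate-deviation lower-tail estimate and really requires the exponential-type inputs rather than Lipschitz concentration alone, which would only give a Gaussian $e^{-cm^2/N}$ tail.
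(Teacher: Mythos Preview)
Your block decomposition $B=[W\ G]$ with $\|B^{T}x\|_2^2=\|Wx\|_2^2+\|G^{T}x\|_2^2$ is a natural starting point, but the incompressible argument has a genuine gap precisely where you anticipate it. The sentence ``on the high-$\LCD$ strata \dots\ the cardinality of the $\LCD$-net is still small enough'' does not hold. Once $\LCDhat(x)$ exceeds the super-polynomial threshold coming from Vershynin's structure theorem for $W$, there is no further structural constraint on $x$; you are netting essentially all of $\Incomp\subset S^{n-1}$. To transfer the event $\|B^{T}x\|_2\le\kappa'\eps m/\sqrt{N}$ through $\|B\|_2\le C\sqrt N$ you need a net of mesh $\rho\asymp\eps m/N$, hence of cardinality $(CN/(\eps m))^{n}$. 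Your per-vector bound from $G^{T}x$ is only $(C\eps)^{m}$, and the quadratic bound from $Wx$ contributes at best $(C\eps m/N)^{(1-\lambda)n}$; the product with the net size is $\asymp(N/(\eps m))^{\lambda n}(C\eps)^{m}$, which diverges for $m\ll n$. Said differently, Vershynin's stratified argument for $W$ alone yields $\eps^{1/8}+e^{-n^{c}}$, not $\eps^{\delta m}$; the factor $(C\eps)^{m}$ you want to extract from $G$ cannot be union-bounded over an $n$-dimensional net.

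The paper resolves exactly this obstruction by the Rudelson--Vershynin random-subset reduction (Lemma~\ref{lemma:incomp:2}): for each $x\in\Incomp$ one shows that, with probability $\ge c^{d}$ over a uniformly random $J\subset[n]$ of size $d=\delta m$, the restriction $P_{J}x/\|P_{J}x\|_2$ lies in $\spread_{J}$, and $\|Bx\|_2\gtrsim\sqrt{d/n}\,\dist(Bz,H_{J^{c}})$ for some $z\in\spread_{J}$. This transfers the infimum from $S^{n-1}$ to the $d$-dimensional set $\spread_{J}$, at the cost of a factor $C^{d}=C^{\delta m}$, which \emph{is} beaten by the per-vector distance bound $\eps^{cm}$. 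One then removes the $d$ dependent rows (Fact~\ref{claim:passing}) to make the remaining distance problem independent, and proves a genuine lower-tail distance estimate (Theorem~\ref{theorem:distance:problem}) via a structure theorem for the $m$-dimensional orthogonal complement $H^{\perp}\subset\R^{N}$. Your decomposition does not substitute for this dimension reduction; without it the $\eps^{\delta m}$ cannot be obtained when $m\ll n$.
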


Thus, if $m$ grows to infinity with $N$, this theorem implies that $\|(PP^T)^{-1}\|_2$ is well under control with very high probability. We will discuss a detailed treatment for Theorem \ref{thm:least} starting from Section \ref{section:least:introduction} by modifying the approach by Rudelson-Vershynin from \cite{RV-rec} and by Vershynin from \cite{V}. 

For  the task of controlling $T=\sum_{1\le i\le n}((PP^T)^{-1}P)_{ii}$ in (2), which is the main contribution of our note, there are two main steps. In the first step we show that with high probability $((PP^T)^{-1}P)_{ii}$ is close to $- \frac{\sum_{1\le j\le n-1}((R_iR_i^T)^{-1}R_i)_j}{D_i}$, where $D_i$ has order $N$ and $R_i$ is the matrix obtained from $P$ by removing its $i$-th row and column. On the other hand, in the second step we show that $T$ is close to $\sum_{1\le j\le n-1}((R_iR_i^T)^{-1}R_i)_j$ for any $1\le i \le n$ with high probability. These two results will then imply that $T$ is close to zero. Our detailed analysis will be presented in  Section \ref{section:distance:dependent:1} and  Section \ref{section:distance:dependent:2}.

In short, our proof of Theorem \ref{thm:distance:dependent} uses both spectral concentration and anti-concentration coupled with linear algebra identities. Given the simplicity of the statement and of its non-Hermitian counterpart, perhaps it is natural to seek for more direct proof. 

To complete our discussion, we give here an application of Theorem \ref{thm:distance:dependent} on the delocalization of the normal vectors in random symmetric matrices. 

\begin{corollary}\label{cor:normal} Let $\Bx$ be the normal vector of the subspace generated by $\row_2(A),\dots,\row_N(A)$. Then with overwhelming probability 

$$\|\Bx\|_\infty =O( \frac{\log^{3/2}N}{\sqrt{N}}).$$

\end{corollary}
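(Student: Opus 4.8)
The plan is to exploit the standard relationship between the coordinates of the normal vector and distances to hyperplanes spanned by subsets of the rows. Let $\Bx = (x_1,\dots,x_N)$ be the unit normal vector to the span $H_1$ of $\row_2(A),\dots,\row_N(A)$; this span is $(N-1)$-dimensional with overwhelming probability (by Theorem \ref{thm:singularity}, the full matrix is non-singular, so in particular the last $N-1$ rows are linearly independent). For a fixed index $i$, the key identity is
\begin{equation*}
|x_i| = \frac{1}{\dist(\row_i(A), H_{-i})},
\end{equation*}
where $H_{-i}$ is the hyperplane spanned by all rows $\row_j(A)$ with $2\le j\le N$ except possibly reindexing — more precisely, $|x_i|$ is the reciprocal of the distance from $\row_i(A)$ to the span of the other $N-1$ of the rows $\row_2(A),\dots,\row_N(A)$ together with... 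Here one has to be slightly careful: the natural identity is that if $\Bx$ is normal to the span of $N-1$ vectors $\bv_2,\dots,\bv_N$, then for any additional vector $\bv_1$ one has $|\langle \Bx,\bv_1\rangle| = \dist(\bv_1, H_1)$, and $\Bx$ itself, expanded in the basis dual to the rows, has coordinates governed by cofactors. The cleanest route is: by symmetry of the matrix, $\row_i(A) = \col_i(A)$, so $\langle \Bx, \col_i(A)\rangle = \langle \Bx, \row_i(A)\rangle$; and $\langle \Bx,\col_i(A)\rangle = \sum_j x_j a_{ji}$, while the $i$-th coordinate of $A\Bx$ is $\row_i(A)\cdot \Bx$. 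Rather than chase the exact identity, I would use the robust fact (as in \cite[Lemma 43]{TVuniversality}) that $|x_i|^{-1}$ equals the distance from $\row_1(A)$ — after a relabeling that moves index $i$ into position $1$, which is legitimate since the Wigner distribution is invariant under simultaneous row–column permutations — to the subspace spanned by the other $N-1$ rows.

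Once this identification is made, the corollary follows by applying Theorem \ref{thm:distance:dependent} with $n = N-1$, so $m = 1$. Wait — $m=1$ does not satisfy the hypothesis $m\ge \log^{\kappa^{-1}}N$. This is the genuine obstacle, and the fix is to not take all $N-1$ remaining rows but instead to bound $|x_i|$ from above by noting that the distance to the span of \emph{any subset} of those rows is a lower bound for the distance to their full span is \emph{false} — distance decreases as the subspace grows. So instead one uses that $|x_i|^{-1} = \dist(\row_{(i)}, H)$ where $H$ has codimension $m = N - (N-1) = 1$; to get a usable lower bound on this distance we should instead observe $\dist(\row_{(i)}, H) \ge \dist(\row_{(i)}, H')$ is wrong, but $\dist(\row_{(i)},H)$ with $H$ of dimension $n$ is what we have, and Theorem \ref{thm:distance:dependent} as stated handles $m = N-n$ large. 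The resolution: the normal-vector coordinate identity can be set up with the \emph{complementary} count — drop index $i$ and also drop a block of $m_0 := \lceil \log^{\kappa^{-1}} N\rceil$ further rows, obtaining a subspace $H_i$ of dimension $N - 1 - m_0$; then $\dist(\row_{(i)}, H_i) \ge \dist(\row_{(i)}, H_1)= |x_i|^{-1}$ is again the wrong direction.

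Let me state the correct mechanism: write $|x_i| = |\cof_{ii}(A)| / |\det A|$ is not scale-free either. The honest approach, and the one I would carry out, is this: $|x_i|$ is the reciprocal of $\dist(\col_i(A), V_i)$ where $V_i = \mathrm{span}\{\col_j(A): j\ne i\} = \mathrm{span}\{\row_j(A): j \ne i\}$ since $A = A^T$, and $V_i$ has dimension $N-1$, i.e.\ here $n = N-1$ and $m = 1$. To bring Theorem \ref{thm:distance:dependent} to bear with a valid value of $m$, I instead bound $\dist(\col_i(A), V_i) \ge \dist(\Pi_W \col_i(A), \Pi_W V_i)$ for a suitable coordinate projection $\Pi_W$, but more directly: split the rows indexed by $\{2,\dots,N\}\setminus\{i\}$ (relabel $i\to 1$) into those generating $H$ and set $m = \log^{\kappa^{-1}} N$, then apply Theorem \ref{thm:distance:dependent} to get $\dist(\row_1, H)\ge \sqrt m - t$ w.h.p., and use the monotonicity $\dist(\row_1, H_1) \le \dist(\row_1, H)$ \emph{together with} a matching upper bound on $\dist(\row_1, H_1)$: but we want a \emph{lower} bound on $|x_i|^{-1} = \dist(\row_1,\mathrm{span\ of\ all\ others})$, hence an upper bound on that distance won't do. The true statement is that $\dist$ to the big span is \emph{at most} $\sqrt{m}$-ish is backwards; the correct identity is $\dist(\row_1, \text{span of other } N-1) = |x_1|^{-1}$ and this is typically $\Theta(1)$, not $\Theta(\sqrt N)$. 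So the bound $\|\Bx\|_\infty = O(\log^{3/2} N/\sqrt N)$ must come from the \emph{other} normalization: after normalizing $\Bx$ to a unit vector, $|x_i| = \dist(\row_i, V_i)^{-1} / (\sum_j \dist(\row_j,V_j)^{-2})^{1/2}$, and one needs a uniform \emph{lower} bound of order $\sqrt N$ on $\sum_j \dist(\row_j, V_j)^{-2}$ — equivalently a typical lower bound of order $\sqrt N$ on each $\dist(\row_j, V_j)^{-1}$, i.e.\ an \emph{upper} bound of order $1/\sqrt N \cdot \mathrm{polylog}$ on each $\dist(\row_j, V_j)$. That is exactly where Theorem \ref{thm:least} enters: $\dist(\row_j, V_j) \ge \sigma_{N-1}(B_j)\cdot(\text{stuff})$... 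Concretely, I expect the clean path is: (i) the unnormalized normal vector has $i$-th coordinate $\propto \det(A^{(i)})$ where $A^{(i)}$ deletes row and column $i$, a Wigner matrix of size $N-1$; (ii) by Theorem \ref{thm:least} (or rather its square case, Theorem \ref{thm:singularity}) and a determinant concentration/comparison, $|\det A^{(i)}|$ concentrates multiplicatively, so the ratios $|x_i|/|x_j|$ are $N^{o(1)}$ with overwhelming probability; (iii) combined with $\sum x_i^2 = 1$ this forces each $|x_i| \le N^{-1/2+o(1)}$, and a more careful accounting of the fluctuations of $\log|\det A^{(i)}|$ — which is where Theorem \ref{thm:distance:dependent} with $m$ up to $N/2$, applied to successive distances $\dist(\row_k, \mathrm{span}(\row_{k+1},\dots)) $ in the expansion $\det = \prod \dist$ — upgrades $N^{o(1)}$ to $\log^{3/2} N$. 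The main obstacle is therefore \textbf{controlling the fluctuations of $\log|\det A^{(i)}|$ sharply enough}, i.e.\ summing the distance concentration bounds of Theorem \ref{thm:distance:dependent} over all scales $m$ and showing the accumulated deviation is $O(\log^{3/2} N)$ rather than $N^{o(1)}$; the union bound over the $N$ choices of $i$ costs only an extra $\log N$ factor and is harmless given the $N^{-\omega(1)}$ error term.
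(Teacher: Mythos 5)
Your proposal does not reach a proof, and the route you finally settle on has two genuine defects. First, the identity in your step (i) is wrong: the unnormalized normal vector to the span of $\row_2(A),\dots,\row_N(A)$ is the generalized cross product of those $N-1$ rows, so its $i$-th coordinate is (up to sign) the determinant of $A$ with \emph{row $1$ and column $i$} deleted, not the principal minor $\det(A^{(i)})$. For $i\neq 1$ this minor is not a symmetric Wigner matrix, so neither Theorem \ref{thm:singularity} nor any determinant CLT for Wigner matrices applies to it directly. Second, even granting a determinant comparison, steps (ii)--(iii) cannot deliver a polylogarithmic bound: $\log|\det|$ of such matrices fluctuates by $\Theta(\sqrt{\log N})$ typically and by $\Theta(\log N)$ at overwhelming-probability confidence levels, so the ratios $|x_i|/|x_j|$ can only be pinned down to within $e^{\Theta(\sqrt{\log N})}=N^{o(1)}$. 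That gives at best $\|\Bx\|_\infty\le N^{-1/2+o(1)}$, and no amount of ``more careful accounting'' of the product-of-distances expansion will compress an inherently $N^{o(1)}$ multiplicative fluctuation into a fixed power of $\log N$. Your earlier instinct that $m=1$ is the obstruction was correct, but every attempted fix in the proposal either reverses an inequality (as you repeatedly note yourself) or lands on this determinant route, which is a dead end for the stated bound.

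The missing idea, which is how the paper (following \cite{RV-del,NgV-linear}) proceeds, is to exploit the linear dependence among the \emph{columns} of the $(N-1)\times N$ matrix $B$ whose rows are $\row_2(A),\dots,\row_N(A)$: since $B\Bx=0$, one has $\sum_i x_i\col_i(B)=0$. Fix a set $\{1\}\cup I$ of $m$ column indices with $m$ polylogarithmic in $N$ (large enough to meet the hypothesis $m\ge\log^{\kappa^{-1}}N$ of Theorem \ref{thm:distance:dependent}), let $H$ be the span of the remaining $N-m$ columns, and project the dependence onto $H^{\perp}$, which has dimension $m-1$ with overwhelming probability. All columns lying in $H$ are annihilated, leaving $|x_1|\,\|P_H\col_1(B)\|_2=\|\sum_{j\in I}x_jP_H\col_j(B)\|_2$. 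Theorem \ref{thm:distance:dependent} (this is exactly where the correlated distance estimate with codimension $m\sim\mathrm{polylog}\,N$ is used, both upper and lower tails) gives $\|P_H\col_j(B)\|_2\asymp\sqrt m$ for every retained column, and Cauchy--Schwarz then yields $|x_1|\ll m^{1/2}\bigl(\sum_{j\in I}x_j^2\bigr)^{1/2}$. Running this over a partition of $\{2,\dots,N\}$ into blocks of size $m-1$ and pigeonholing (since $\sum_ix_i^2\le 1$, some block carries mass at most $(m-1)/(N-1)$) gives $|x_1|\ll m/\sqrt N$, which is $O(\log^{3/2}N/\sqrt N)$ for the admissible choice of $m$; a union bound over the $N$ coordinates finishes the proof. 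The contrast with your approach is that the normal-vector coordinate is never expressed through a single codimension-one distance or a determinant; it is controlled through many codimension-$m$ distances, which is precisely the regime where the main theorem is applicable.
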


This is an analog of a result from \cite{NgV-linear} where $A$ is a non-symmetric iid matrix, see also \cite{RV-del}. However, the method in these papers do not seem to extend to our symmetric model. 

Finally, as the $i$-th row vector of the inverse matrix $A^{-1}$ is orthogonal to all other row vectors of index different from $i$ of the matrix $A$, by comparing in each row and then each column, we deduce the following bound on the entries $(A^{-1})_{ij}$ of the inverse marix $A^{-1}$.

\begin{corollary}
With overwhelming probability we have

$$\sup_{1\le i,j\le N} \frac{|(A^{-1})_{ij}|}{\|A^{-1}\|_{HS}} \le \frac{\log^3 N}{N}.$$
\end{corollary}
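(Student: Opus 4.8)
The plan is to upgrade the single orthogonality relation recalled just above into an estimate valid for \emph{every} row of $A^{-1}$, and then to play the rows against the columns. With overwhelming probability $A$ is invertible (Theorem~\ref{thm:singularity}, or \cite{V,Ng-sym}), so $A^{-1}$ exists and is symmetric, and the $N$ rows of $A$ are linearly independent; in particular for each $i$ the $N-1$ vectors $\{\row_k(A):k\ne i\}$ span a hyperplane $H_i\subset\R^N$, with unit normal $\Bx_i$. Since $A$ is symmetric its $k$-th column is $\row_k(A)$, so $A^{-1}A=I$ gives $\langle\row_i(A^{-1}),\row_k(A)\rangle=\delta_{ik}$; hence $\row_i(A^{-1})\perp H_i$, i.e. $\row_i(A^{-1})=c_i\Bx_i$ with $|c_i|=\|\row_i(A^{-1})\|_2$, and therefore for all $i,j$
$$|(A^{-1})_{ij}|=|c_i|\,|(\Bx_i)_j|\le\|\row_i(A^{-1})\|_2\,\|\Bx_i\|_\infty .$$
Symmetry of $A$ enters precisely here: for a non-symmetric matrix $\row_i(A^{-1})$ would only be orthogonal to a span of \emph{columns} of $A$, an object that Corollary~\ref{cor:normal} does not control.

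Next I want this delocalization bound for all rows simultaneously. Conjugating $A$ by a permutation matrix permutes its rows and columns by the same permutation, which again yields a symmetric Wigner matrix and permutes the family $(\Bx_i)$; hence each $\|\Bx_i\|_\infty$ has the law of $\|\Bx\|_\infty$ from Corollary~\ref{cor:normal}, and since that holds with probability $1-N^{-\omega(1)}$ a union bound over $1\le i\le N$ gives, with overwhelming probability,
$$\eta:=\max_{1\le i\le N}\|\Bx_i\|_\infty=O\!\left(\frac{\log^{3/2}N}{\sqrt N}\right).$$

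Finally I compare across columns. Since $A^{-1}$ is symmetric, $\|\row_i(A^{-1})\|_2^2=\sum_{j=1}^N(A^{-1})_{ji}^2$, and substituting $\row_j(A^{-1})=c_j\Bx_j$ from the first paragraph,
$$\|\row_i(A^{-1})\|_2^2=\sum_{j=1}^N c_j^2(\Bx_j)_i^2=\sum_{j=1}^N\|\row_j(A^{-1})\|_2^2\,(\Bx_j)_i^2\le\eta^2\sum_{j=1}^N\|\row_j(A^{-1})\|_2^2=\eta^2\,\|A^{-1}\|_{HS}^2 .$$
Plugging $\|\row_i(A^{-1})\|_2\le\eta\,\|A^{-1}\|_{HS}$ into the bound of the first paragraph yields, for all $i,j$,
$$|(A^{-1})_{ij}|\le\eta^2\,\|A^{-1}\|_{HS}=O\!\left(\frac{\log^3N}{N}\right)\|A^{-1}\|_{HS},$$
which is the assertion (after absorbing the implied constant into the power of $\log N$, or simply reading the conclusion as an $O(\cdot)$ bound).

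The only step calling for care is the exchangeability used in the second paragraph — that simultaneous row--column permutations preserve the symmetric Wigner law — together with the remark that the resulting union bound is costless because Corollary~\ref{cor:normal} is an overwhelming-probability statement; everything else is the elementary identity and estimate above. It is worth noting where the saving comes from: the naive combination $|(A^{-1})_{ij}|\le\|\row_i(A^{-1})\|_2\,\|\Bx_i\|_\infty$ together with the trivial $\|\row_i(A^{-1})\|_2\le\|A^{-1}\|_{HS}$ only gives $O(\log^{3/2}N/\sqrt N)$; the extra factor $\eta$, hence the passage to $O(\log^3N/N)$, is exactly the effect of invoking the delocalization bound a second time along the column — which is the meaning of ``comparing in each row and then each column.''
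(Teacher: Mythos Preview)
Your proof is correct and is precisely the argument the paper has in mind: the paper only sketches it in one sentence (``by comparing in each row and then each column''), and you have fleshed out exactly that --- first $|(A^{-1})_{ij}|\le\eta\,\|\row_i(A^{-1})\|_2$ from the row, then $\|\row_i(A^{-1})\|_2\le\eta\,\|A^{-1}\|_{HS}$ from the column via symmetry of $A^{-1}$, combining to $\eta^2\|A^{-1}\|_{HS}$. The exchangeability/union-bound step and the use of Corollary~\ref{cor:normal} for every $i$ are handled correctly.
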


For terminology, we will use both row and column vectors frequently in this note, here $\row_i(A)$ and $\col_j(A)$ stand for the $i$-th row and $j$-th column of $A$ respectively. Throughout this paper, we regard $N$ as an asymptotic parameter going to infinity. We write $X = O(Y ), X\ll Y$, or $Y\gg X$ to denote the claim that $|X| \le CY$ for some fixed $C$; this fixed quantity $C$ is allowed to depend on other fixed quantities such as the sub-gaussian parameter $K_0$ of $\xi$ unless explicitly declared otherwise. We also write $X \asymp Y$ or $X=\Theta(Y)$ for $X,Y>0$ to denote the claim that $X\ll Y \ll X$. Lastly, to simplify our presentation, a same constant may used in different places even when they can be totally different.


\begin{center} 

\begin{figure}[!ht]
   \centerline{
    \includegraphics[width=0.6\textwidth]{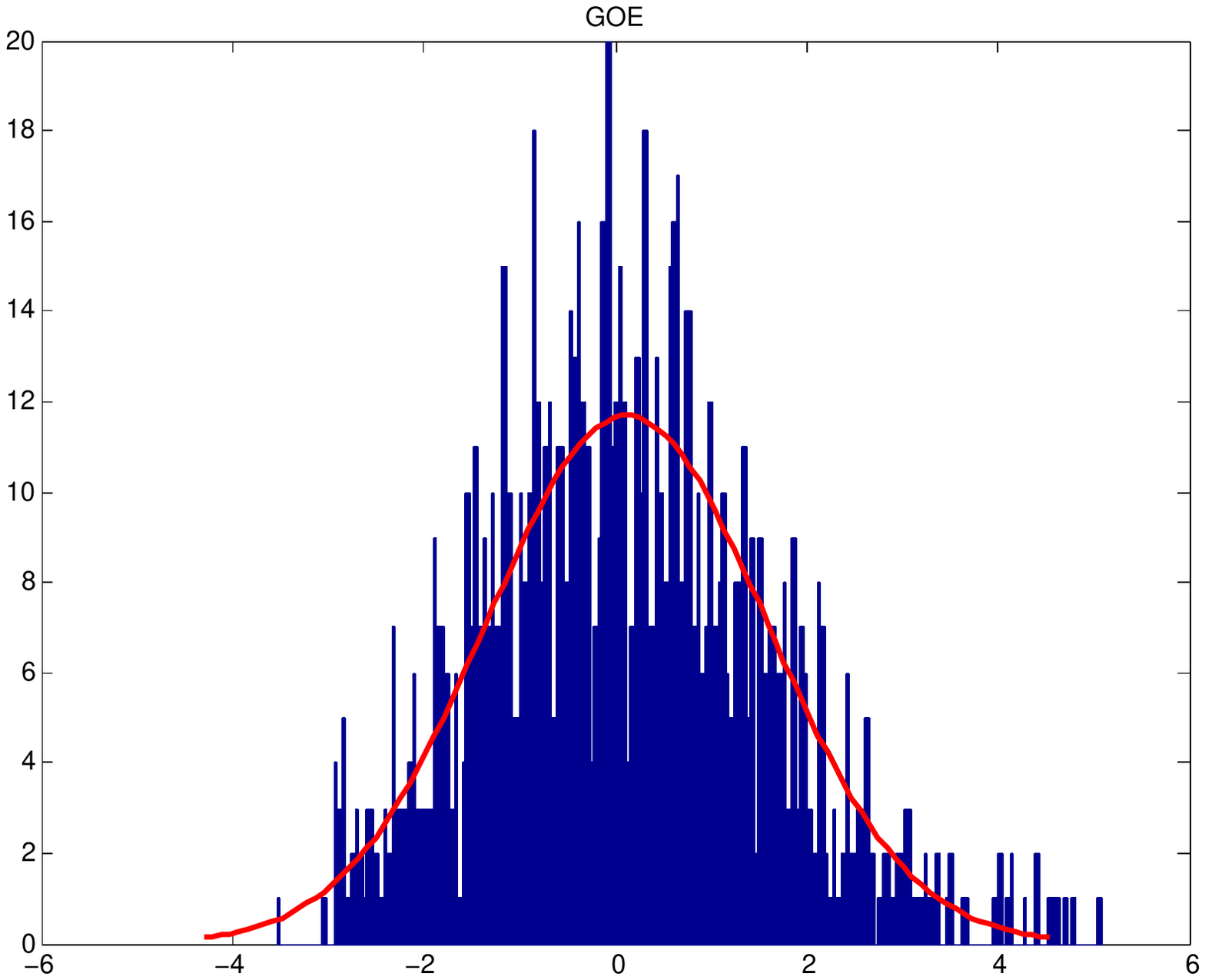} \includegraphics[width=0.6\textwidth]{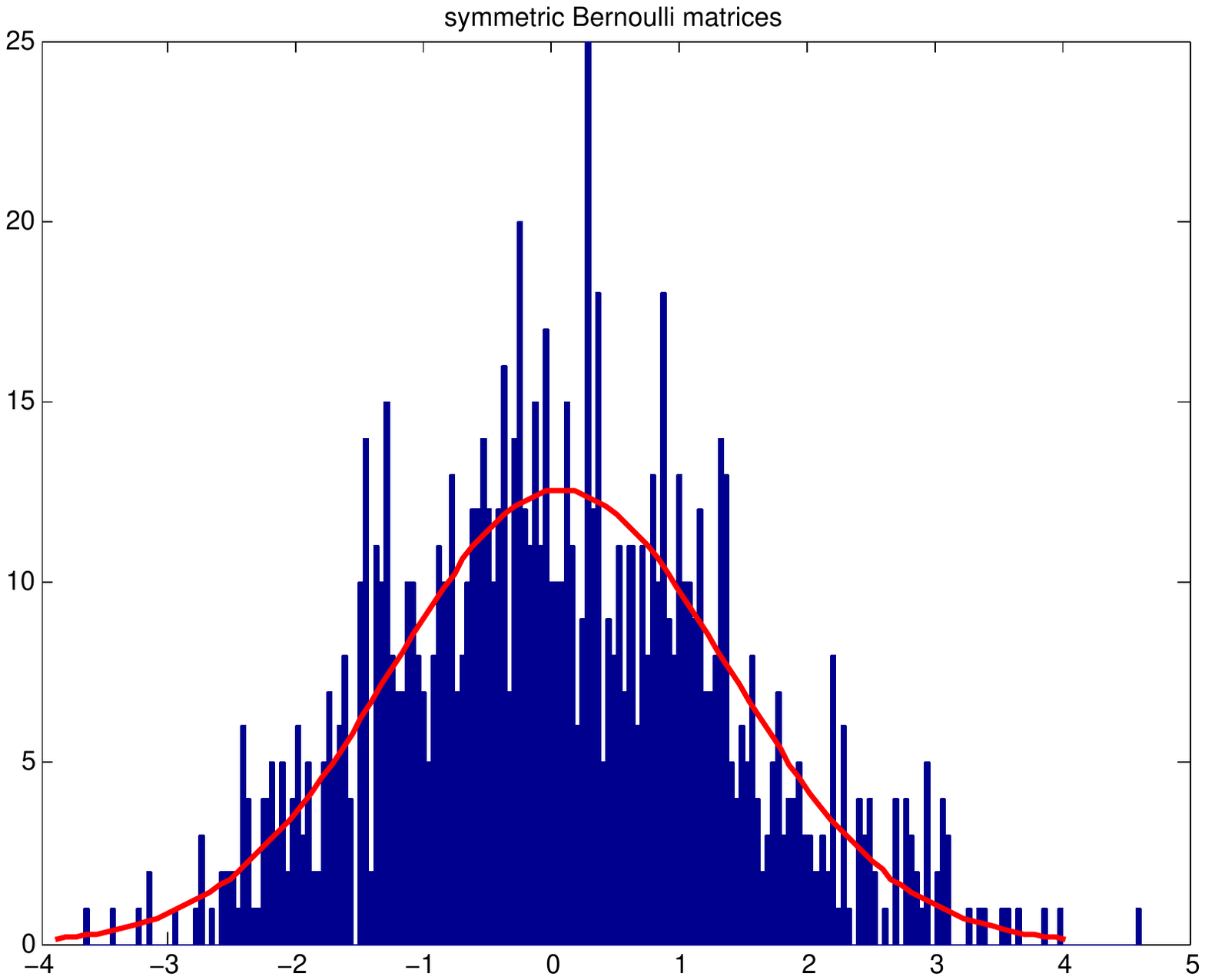}
    } \caption{We sampled 1000 random matrices of size 1000 from GOE and symmetric Bernoulli ensembles.  The histogram represents $(\dist^2-m)/\sqrt{m}$, where the distance is measured from the first row to the subspace generated by the next 900 rows.}
    \label{fig:1}
\end{figure}

\end{center}



\section{Proof of Theorem \ref{thm:distance:dependent}: main ingredients}\label{section:distance:dependent:1}

Let $\Bx=\row_1(A)=(a_{11},\dots,a_{1N})$ be the first row vector of $A$, and $\By_{i}:=\row_{i+1}(A)=(a_{(i+1)1},\dots, a_{(i+1)N})$ be the $(i+1)$-st row vector of $A$ for $1\le i\le n$. Recall that $B$ is the matrix of size $n\times N$ with rows $\By_i, 1\le i\le n$. We can represent $\dist(\row_1,H)=\dist(\Bx,H)$ by the following formula

\begin{equation}\label{eqn:dist:1}
\dist^2(\Bx,H) = \Bx (I_N - B^T (BB^T)^{-1}B) \Bx^T.
\end{equation}

\begin{lemma}\label{lemma:distance:removal}
We have 
$$\dist^2(\Bx,H) = {\Bx^{[1]}} (I_{N-1} -P^T (P P^T)^{-1} P) {\Bx^{[1]}}^T + \frac{[a_{11} - \Bz^T (PP^T)^{-1}P {\Bx^{[1]}}^T]^2}{1+ \Bz^T (PP^T)^{-1} \Bz},$$
where $P$ be the matrix obtained from $B$ by removing its first column $\Bz:=(a_{21},\dots,a_{(n+1)1})^T$, and $\Bx^{[1]}$ is the truncated row vector of $\Bx$, $\Bx^{[1]}=(a_{12},\dots,a_{1N})$, $B= \begin{pmatrix} \Bz & P \end{pmatrix}.$
\end{lemma}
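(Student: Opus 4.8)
The stated formula is a purely algebraic identity, valid on the event that $PP^T$ is invertible; this is the regime relevant for us in view of Theorem~\ref{thm:least}, and one should note that whenever $PP^T$ is invertible so is $BB^T$, because $(PP^T)^{-1}$ is positive definite and hence $1+\Bz^T(PP^T)^{-1}\Bz\ge 1\ne 0$. My plan is to start from formula \eqref{eqn:dist:1}, expand the quadratic form $\Bx(I_N-B^TG^{-1}B)\Bx^T$ blockwise along the coordinate splitting $\R^N=\R e_1\oplus\R^{N-1}$ (with $G:=BB^T$), and then reduce $(BB^T)^{-1}$ to $(PP^T)^{-1}$ by the Sherman--Morrison rank-one update, since $BB^T=PP^T+\Bz\Bz^T$.

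\textbf{Step 1 (block expansion).} Writing $B=\begin{pmatrix}\Bz & P\end{pmatrix}$, I would first record that
\begin{equation*}
B^TG^{-1}B=\begin{pmatrix}\Bz^TG^{-1}\Bz & \Bz^TG^{-1}P\\ P^TG^{-1}\Bz & P^TG^{-1}P\end{pmatrix},
\end{equation*}
so that, since $\Bx=(a_{11},\Bx^{[1]})$, the right-hand side of \eqref{eqn:dist:1} equals
\begin{equation*}
a_{11}^2\bigl(1-\Bz^TG^{-1}\Bz\bigr)-2a_{11}\,\Bz^TG^{-1}P\,{\Bx^{[1]}}^T+\Bx^{[1]}\bigl(I_{N-1}-P^TG^{-1}P\bigr){\Bx^{[1]}}^T .
\end{equation*}
Here the off-diagonal block contributes the single cross term because $\Bz^TG^{-1}P\,{\Bx^{[1]}}^T$ and ${\Bx^{[1]}}P^TG^{-1}\Bz$ are equal scalars.

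\textbf{Step 2 (rank-one update and completing the square).} Set $M:=PP^T$ and $\beta:=1+\Bz^TM^{-1}\Bz$. Sherman--Morrison gives $G^{-1}=M^{-1}-\beta^{-1}M^{-1}\Bz\Bz^TM^{-1}$, and a one-line computation using $\beta-\Bz^TM^{-1}\Bz=1$ yields $1-\Bz^TG^{-1}\Bz=\beta^{-1}$, $\Bz^TG^{-1}P=\beta^{-1}\Bz^TM^{-1}P$, and $P^TG^{-1}P=P^TM^{-1}P-\beta^{-1}P^TM^{-1}\Bz\Bz^TM^{-1}P$. Abbreviating the scalar $u:=\Bz^TM^{-1}P\,{\Bx^{[1]}}^T={\Bx^{[1]}}P^TM^{-1}\Bz$ and substituting, the two terms carrying $M^{-1}\Bz\Bz^TM^{-1}$ collapse to $\beta^{-1}u^2$, and the display from Step~1 becomes
\begin{equation*}
\frac{a_{11}^2-2a_{11}u+u^2}{\beta}+\Bx^{[1]}\bigl(I_{N-1}-P^TM^{-1}P\bigr){\Bx^{[1]}}^T=\frac{(a_{11}-u)^2}{1+\Bz^T(PP^T)^{-1}\Bz}+\Bx^{[1]}\bigl(I_{N-1}-P^T(PP^T)^{-1}P\bigr){\Bx^{[1]}}^T,
\end{equation*}
which is precisely the claimed identity.

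\textbf{Anticipated difficulty.} There is no genuine obstacle here: the proof is routine once the two moves above are set up. The only points requiring care are the bookkeeping of scalar/vector/matrix blocks in Step~1 and the algebraic simplification of the rank-one update in Step~2. The single conceptual observation is that one scalar $u$ governs both the cross term $-2a_{11}u$ arising from the off-diagonal block and the term $\beta^{-1}u^2$ arising from the rank-one correction, which is exactly why the three pieces assemble into the perfect square $(a_{11}-u)^2$.
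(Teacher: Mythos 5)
Your proposal is correct and follows essentially the same route as the paper: both start from \eqref{eqn:dist:1}, expand $B^T(BB^T)^{-1}B$ blockwise along the splitting $\R^N=\R e_1\oplus\R^{N-1}$, apply the Sherman--Morrison identity $(PP^T+\Bz\Bz^T)^{-1}=(PP^T)^{-1}-\beta^{-1}(PP^T)^{-1}\Bz\Bz^T(PP^T)^{-1}$, and assemble the three resulting scalars into the perfect square $(a_{11}-u)^2/\beta$. The only (immaterial) difference is the order of the two moves — the paper simplifies $(BB^T)^{-1}$ first and then expands the blocks, while you expand first — and your remark that invertibility of $PP^T$ suffices is a harmless extra observation.
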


\begin{proof}  A direct calculation shows

\begin{equation}\label{eqn:Q}
Q:=(BB^T)^{-1} = ( P P^T + \Bz \Bz^T)^{-1} = (P P^T)^{-1} - \frac{(PP^T)^{-1} \Bz \Bz^T (P P^T)^{-1}}{1+ \Bz^T (PP^T)^{-1} \Bz}.
\end{equation}

Thus $B^T (BB^T)^{-1}B = B^T Q B$, which has the form 

\[ B^T Q B  =
\left(
\begin{array}{cc}
\Bz^T Q \Bz & \Bz^T Q P \\
P^T Q \Bz    & P^T Q P
\end{array}
\right).
\]

Hence,

$$\Bx (I_N - B^T (BB^T)^{-1} B) \Bx^T = \Bx \Bx^T - a_{11}^2 \Bz^T Q \Bz - 2 a_{11} \Bz^T Q P {\Bx^{[1]}}^T - {\Bx^{[1]}} P^T Q P {\Bx^{[1]}}^T.$$

Using \eqref{eqn:Q}, we obtain the following

$$\Bz^T Q \Bz = \Bz^T (PP^T)^{-1} \Bz - \frac{ (\Bz^T (PP^T)^{-1} \Bz)^2}{1 + \Bz^T (PP^T)^{-1} \Bz} = \frac{\Bz^T (PP^T)^{-1} \Bz}{1 + \Bz^T (PP^T)^{-1} \Bz};$$

as well as 

\begin{align*}
\Bz^T Q P {\Bx^{[1]}}^T &=  \Bz^T [(P P^T)^{-1} - \frac{(PP^T)^{-1} \Bz \Bz^T (P P^T)^{-1}}{1+ \Bz^T (PP^T)^{-1} \Bz}] P{\Bx^{[1]}}^T \\
&=\Bz^T (P P^T)^{-1}P{\Bx^{[1]}}^T - \frac{\Bz^T (PP^T)^{-1} \Bz \Bz^T (P P^T)^{-1} P{\Bx^{[1]}}^T }{1+ \Bz^T (PP^T)^{-1} \Bz}\\
&= \frac{\Bz^T (PP^T)^{-1}P{\Bx^{[1]}}^T }{1+ \Bz^T (PP^T)^{-1}\Bz};
\end{align*}

and

\begin{align*}
 {\Bx^{[1]}} P^T Q P {\Bx^{[1]}}^T &=  {\Bx^{[1]}} P^T [ (P P^T)^{-1} - \frac{(PP^T)^{-1} \Bz \Bz^T (P P^T)^{-1}}{1+ \Bz^T (PP^T)^{-1} \Bz}] P {\Bx^{[1]}}^T\\
 &= {\Bx^{[1]}} P^T (P P^T)^{-1} P {\Bx^{[1]}}^T - \frac{ {\Bx^{[1]}} P^T  (PP^T)^{-1} \Bz \Bz^T (P P^T)^{-1}  P {\Bx^{[1]}}^T}{1+ \Bz^T (PP^T)^{-1} \Bz}. 
\end{align*}

Putting together, one obtains the following 

\begin{align}\label{eqn:dist:formula:general}
\dist^2(\Bx,H)  &= \Bx (I_N - B^T (BB^T)^{-1}B) \Bx^T \nonumber \\ 
 &=  {\Bx^{[1]}} (I_{N-1} -P^T (P P^T)^{-1} P) {\Bx^{[1]}}^T  + a_{11}^2 - a_{11}^2 \frac{\Bz^T (PP^T)^{-1} \Bz}{1 + \Bz^T (PP^T)^{-1} \Bz}\nonumber \\
 &- 2a_{11} \frac{\Bz^T (PP^T)^{-1}P {\Bx^{[1]}}^T }{1+ \Bz^T (PP^T)^{-1}\Bz} +\frac{ {\Bx^{[1]}} P^T  (PP^T)^{-1} \Bz \Bz^T (P P^T)^{-1}  P {\Bx^{[1]}}^T}{1+ \Bz^T (PP^T)^{-1} \Bz} \nonumber \\
 &=  {\Bx^{[1]}} (I_{N-1} -P^T (P P^T)^{-1} P) {\Bx^{[1]}}^T \nonumber \\
 &+ \frac{a_{11}^2 - 2a_{11} \Bz^T (PP^T)^{-1}P {\Bx^{[1]}}^T + {\Bx^{[1]}} P^T  (PP^T)^{-1} \Bz \Bz^T (P P^T)^{-1}  P {\Bx^{[1]}}^T }{1+ \Bz^T (PP^T)^{-1} \Bz} \nonumber \\
& = \Bx^{[1]} (I_{N-1} -P^T (P P^T)^{-1} P) {\Bx^{[1]}}^T + \frac{[a_{11} - \Bz^T (PP^T)^{-1}P {\Bx^{[1]}}^T]^2}{1+ \Bz^T (PP^T)^{-1} \Bz}  \nonumber ,
 \end{align}

proving the lemma.
\end{proof}

Note that the term ${\Bx^{[1]}} (I_{N-1} -P^T (P P^T)^{-1} P) {\Bx^{[1]}}^T$ in Lemma \ref{lemma:distance:removal} is just the squared distance from $\Bx^{[1]}$ to the subspace generated by the rows of $P$ in $\R^{N-1}$. The key difference here is that $\Bx^{[1]}$ is now independent of $P$, so we can apply Theorem \ref{thm:distance:independent}, noting that by Theorem \ref{thm:singularity} with probability at least $1-\exp(-N^{\kappa'})$ the matrix obtained from $A$ by removing its first row and column has full rank, and so the co-dimension of the subspace generated by the rows of $P$ in $\R^{N-1}$ is $N-1-n=m-1$ with very high probability.

\begin{theorem}\label{thm:distance:independent'} Assume that the entries of $A$ have subgaussian parameter $K_0>0$. Then
$$\P\left(|\sqrt{{\Bx^{[1]}}^T (I_{N-1} -P^T (P P^T)^{-1} P) \Bx^{[1]}} - \sqrt{m-1}|\ge \lambda\right) \le \exp(-\lambda^2/K_0^4) + \exp(-N^{\kappa'}).$$
\end{theorem}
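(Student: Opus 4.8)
The strategy is to condition on $P$ and reduce to Theorem~\ref{thm:distance:independent} in its deterministic-subspace form. Write $\Pi := I_{N-1} - P^T(PP^T)^{-1}P$, and let $H'\subset\R^{N-1}$ be the subspace spanned by the rows of $P$. Whenever $PP^T$ is invertible one checks directly that $\Pi^T = \Pi$ and $\Pi^2 = \Pi$, so $\Pi$ is the orthogonal projection onto $(H')^\perp$; moreover $\rank \Pi = \tr \Pi = (N-1) - \tr((PP^T)^{-1}PP^T) = (N-1)-n = m-1$. Consequently the quantity under the square root in the statement equals $\dist^2(\Bx^{[1]}, H')$. In the symmetric model every coordinate of $\Bx^{[1]} = (a_{12},\dots,a_{1N})$ carries the index $1$, whereas $P$ is a measurable function of $\{a_{ij} : 2 \le i \le j \le N\}$; since these two families of entries are independent, $\Bx^{[1]}$ is independent of $P$, and it is a vector of $N-1$ i.i.d.\ coordinates with subgaussian parameter $K_0$.

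The only point to address is that $PP^T$ may fail to be invertible, equivalently that $P$ may not have full row rank $n$. Let $\widetilde A$ be the $(N-1)\times(N-1)$ symmetric Wigner matrix obtained from $A$ by deleting its first row and column. Since $P$ consists of $n\le N-1$ of the rows of $\widetilde A$, we have $\rank P = n$ as soon as $\widetilde A$ is nonsingular; applying Theorem~\ref{thm:singularity} to $\widetilde A$ and letting $\eps\to 0$ gives $\P(\widetilde A\text{ singular}) = O(e^{-(N-1)^{\kappa'}})$. Hence, after harmlessly relabelling $\kappa'$, the event $\CE := \{\rank P = n\}$ satisfies $\P(\CE^c)\le \exp(-N^{\kappa'})$, and on $\CE$ the projection $\Pi$ has rank exactly $m-1$ and $H'$ is a non-degenerate subspace of codimension $m-1$.

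It remains to condition on $P$ and work on $\CE$. There $\Bx^{[1]}$ is, independently of $P$, a vector of $N-1$ i.i.d.\ subgaussian coordinates, and $\dist(\Bx^{[1]},H')$ is the distance from such a vector to the deterministic non-degenerate subspace $H'$ of codimension $m-1$; so by the remark following Theorem~\ref{thm:distance:independent}, which applies verbatim to deterministic subspaces,
\[
\P\big(|\dist(\Bx^{[1]}, H') - \sqrt{m-1}| \ge \lambda \mid P\big) \le \exp(-\lambda^2/K_0^4) \quad\text{on } \CE.
\]
(Alternatively one may invoke Hanson--Wright directly, since $\dist^2(\Bx^{[1]},H') = {\Bx^{[1]}}^T\Pi\,\Bx^{[1]}$ is a quadratic form in i.i.d.\ subgaussian variables with $\tr\Pi = \|\Pi\|_{HS}^2 = m-1$ and $\|\Pi\|_2 = 1$.) Taking expectations over $P$ and adding $\P(\CE^c)$ yields the claimed estimate. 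There is essentially no obstacle here: the only input beyond the classical Theorem~\ref{thm:distance:independent} is the symmetric nonsingularity bound Theorem~\ref{thm:singularity}, needed solely to guarantee that $\Pi$ is a projection of the correct rank with overwhelming probability; the genuine difficulty of the paper lies, as stressed in the introduction, in controlling the second term of Lemma~\ref{lemma:distance:removal}, not in Theorem~\ref{thm:distance:independent'}.
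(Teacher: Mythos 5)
Your proposal is correct and follows essentially the same route as the paper: the author likewise observes that $\Bx^{[1]}$ is independent of $P$, invokes Theorem~\ref{thm:singularity} on the minor obtained by deleting the first row and column of $A$ to guarantee that $P$ has full row rank (hence that the projection has rank $m-1$) up to an exceptional event of probability $\exp(-N^{\kappa'})$, and then applies Theorem~\ref{thm:distance:independent} conditionally on $P$. Your verification that $I_{N-1}-P^T(PP^T)^{-1}P$ is the orthogonal projection of trace $m-1$ merely makes explicit what the paper leaves implicit.
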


This confirms the lower tail bound in Theorem \ref{thm:distance:dependent}. Thus, in order to prove our main result, we must study the remaining term $\frac{[a_{11} - \Bz^T (PP^T)^{-1}P {\Bx^{[1]}}^T]^2}{1+ \Bz^T (PP^T)^{-1} \Bz}$ in Lemma \ref{lemma:distance:removal}.

\begin{theorem}[Bound on the error term, key lemma]\label{thm:distance:error} 
We have
$$\P( \frac{[a_{11} - \Bz^T (PP^T)^{-1}P {\Bx^{[1]}}^T]^2}{1+ \Bz^T (PP^T)^{-1} \Bz} \ge t) = O\Big(N^{-\omega(1)}+N \exp(- \kappa t)\Big).$$
\end{theorem}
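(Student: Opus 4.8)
The plan is to first use the symmetry of $A$ to isolate the part of $\Bx^{[1]}$ that is entangled with $P$. Since $a_{1,j+1}=a_{j+1,1}$, the first $n$ coordinates of $\Bx^{[1]}$ are exactly $\Bz=(a_{21},\dots,a_{(n+1)1})^T$, so $\Bx^{[1]}=(\Bz^T,\Bw^T)$ with $\Bw=(a_{1,n+2},\dots,a_{1N})$; moreover $a_{11},\Bz,\Bw$ and $P$ are mutually independent, the first $n$ columns of $P$ form the symmetric principal submatrix $M$ of $A$ on the index set $\{2,\dots,n+1\}$, and the remaining block $P_2$ (so $P=(M\ \ P_2)$ with $P_2$ of size $n\times(m-1)$) is independent of $M$. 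Hence the numerator appearing in Theorem~\ref{thm:distance:error} equals
$$a_{11}-\Bz^T(PP^T)^{-1}M\Bz-\Bz^T(PP^T)^{-1}P_2\Bw,$$
and it suffices to bound the squares of these three summands and to bound $D:=1+\Bz^T(PP^T)^{-1}\Bz$ from below (trivially by $1$, and by $\asymp N/m$ on a good event $\CE$).

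Two of the summands are routine. For $a_{11}$ I would invoke the subgaussian tail of $\xi$. For $L:=\Bz^T(PP^T)^{-1}P_2\Bw$, I would condition on $(\Bz,P)$ and treat $L$ as a subgaussian linear form in the independent coordinates of $\Bw$, with variance proxy $\|P_2^T(PP^T)^{-1}\Bz\|_2^2$. Using $P_2P_2^T=PP^T-M^2$, this equals $\Bz^T(PP^T)^{-1}\Bz-\|M(PP^T)^{-1}\Bz\|_2^2\le D-1<D$, so $\P(L^2/D\ge t\mid\Bz,P)\le C\exp(-ct)$ for all $t>0$, with no spectral input — a clean cancellation that the symmetric structure makes available.

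The quadratic form $\CB:=\Bz^T(PP^T)^{-1}M\Bz$ is the crux. Since $\Bz$ is independent of $P$, its conditional mean given $P$ is $\tr\!\big((PP^T)^{-1}M\big)$; and because $(PP^T)^{-1}M=\big((PP^T)^{-1}P\big)\Pi$, where $\Pi\colon\R^n\hookrightarrow\R^{N-1}$ is the inclusion onto the first $n$ coordinates, this conditional mean is precisely $T=\sum_{i=1}^n\big((PP^T)^{-1}P\big)_{ii}$. I would write $\CB=T+\big(\Bz^TS\Bz-\tr S\big)$ with $S$ the symmetric part of $(PP^T)^{-1}M$, for which the same identity gives $\|S\|_{HS}^2\le\tr\big((PP^T)^{-1}\big)$ and $\|S\|_2\le\|(PP^T)^{-1}P\|_2=\sigma_n(P)^{-1}$. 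Let $\CE$ be the high-probability event (combining Theorem~\ref{thm:least} with the spectral concentration estimates and a Hanson--Wright lower tail in $\Bz$, which is legitimate since $\tr((PP^T)^{-1})/\|(PP^T)^{-1}\|_2\asymp m\ge\log^{1/\kappa}N\gg\log N$) on which $\sigma_n(P)\gg mN^{-1/2}$, $\tr\big((PP^T)^{-1}\big)\gtrsim N/m$, and $D\gtrsim N/m$; then $\P(\CE^c)=N^{-\omega(1)}$. On $\CE$, the event $\{\CB^2/D\ge t\}$ forces $|\Bz^TS\Bz-\tr S|\ge\tfrac12\sqrt{t\,c_0\tr((PP^T)^{-1})}-|T|$, so granting $|T|\le N^{-c}$ (next paragraph) the Hanson--Wright inequality — using $\|S\|_{HS}^2\le\tr((PP^T)^{-1})$ to make the $\ell^2$-exponent $\gtrsim t$, and $\|S\|_2\le\sigma_n(P)^{-1}\ll\sqrt N/m$ with $\tr((PP^T)^{-1})\gtrsim N/m$ to make the $\ell^\infty$-exponent $\gtrsim\sqrt{tm}$ — yields $\P(\CB^2/D\ge t\mid P)\le 2\exp\!\big(-c\min(t,\sqrt{tm})\big)$. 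For $t\le m$ this is $O(e^{-ct})$, and for $t>m$ it is $O(e^{-cm})=N^{-\omega(1)}$ since $m\ge\log^{1/\kappa}N$ with $\kappa<1$. (When $m>N/2$ the matrix $PP^T$ is so well conditioned — $\sigma_n(P)\gtrsim\sqrt N$ — that $\|S\|_{HS}=O(1)$, $\|S\|_2=O(N^{-1/2})$, and $D\ge1$ already suffices; so one may assume $m\le N/2$, equivalently $n\asymp N$, in the above.)

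The hard part will be showing that $T=\sum_{i=1}^n\big((PP^T)^{-1}P\big)_{ii}$ is negligible, say $|T|\le N^{-c}$, with probability $1-N^{-\omega(1)}$; this is the genuine content of the argument. Following the two-step scheme of the introduction, a Schur-complement / rank-one computation writes $\big((PP^T)^{-1}P\big)_{ii}=-T_i/D_i+(\text{small error})$, where $R_i$ is $P$ with its $i$-th row and column deleted, $T_i=\sum_j\big((R_iR_i^T)^{-1}R_i\big)_{jj}$ is the analogue of $T$ for $R_i$, and $D_i$ is a distance-type statistic of order $N$; I would bound the error term and establish $D_i=\Theta(N)$ on the leave-one-out versions of $\CE$, using Theorems~\ref{thm:singularity}--\ref{thm:least} and subgaussian/Hanson--Wright concentration — exactly the "non-standard statistics of $P$" flagged in the introduction. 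A second, stability step shows that $T$ is close to each $T_i$, uniformly over $i$ (the union bound over $i$ here produces the factor $N$ in the final estimate). Combining the two, $T\approx-T\sum_{i=1}^n 1/D_i$ with $\sum_i 1/D_i=\Theta(n/N)$, so $\big(1+\Theta(n/N)\big)T$ is small and hence $|T|\le N^{-c}$. Feeding the three summand bounds and the bound on $D$ back into Lemma~\ref{lemma:distance:removal} (equivalently, the display above) then gives the probability in Theorem~\ref{thm:distance:error} is $O\big(N^{-\omega(1)}+Ne^{-\kappa t}\big)$. The overwhelmingly hardest step is this control of $T$; everything else is concentration bookkeeping.
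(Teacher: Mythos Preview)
Your overall plan is essentially the paper's: both reduce everything to controlling $T=\sum_{i}((PP^T)^{-1}P)_{ii}$ via exactly the two-step Schur-complement/stability scheme you describe, paired with Hanson--Wright and the spectral input of Theorem~\ref{thm:least} and Corollary~\ref{cor:HSnorm} for the denominator $D\asymp N/m$. Your explicit splitting $\Bx^{[1]}=(\Bz^T,\Bw^T)$ and the observation that $L^2/D$ has a subgaussian tail \emph{with no spectral input} (via $\|P_2^T(PP^T)^{-1}\Bz\|_2^2=\Bz^T(PP^T)^{-1}\Bz-\|M(PP^T)^{-1}\Bz\|_2^2\le D-1$) is a clean simplification over the paper, which instead applies Hanson--Wright to the whole bilinear form $\Bz^T(PP^T)^{-1}P{\Bx^{[1]}}^T$ at once, viewing it as a quadratic form in the joint vector $(\Bz,\Bw)$ with mean $T$ and Hilbert--Schmidt norm $\|(PP^T)^{-1}P\|_{HS}=\sqrt{\tr((PP^T)^{-1})}$.

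There is, however, one quantitative misstep: the target $|T|\le N^{-c}$ with probability $1-N^{-\omega(1)}$ is too strong and is \emph{not} what the two-step scheme delivers. The stability bound $|T-T_i|\ll\sqrt{N}/m$ (Lemma~\ref{lemma:comparison:trace}) together with the Hanson--Wright fluctuations in the numerators of Lemma~\ref{lemma:formula:diagonal} give only
\[
|T|=O\!\Big(\sqrt{N}/m + t\,\big(\sqrt{N/m}+1\big)\Big)\qquad\text{with probability }1-O\big(N^{-\omega(1)}+N\exp(-Ct)\big),
\]
which is the paper's Lemma~\ref{lemma:PPP:diagsum}; since $\sqrt{N/m}\ge 1$ this cannot be made $\le N^{-c}$. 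Fortunately this weaker bound is exactly what is needed: on $\CE$ one has $T^2/D=O(t^2)$, and your Hanson--Wright bound on $\CB-T$ (run with the \emph{same} parameter $t$) then gives $\CB^2/D=O(t^2)$ with the stated tail. So the fix is simply to couple the deviation parameter in the $T$-estimate to the final $t$ rather than to aim for fixed polynomial smallness; once you do that, your argument and the paper's coincide.
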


It is clear that Theorem \ref{thm:distance:independent'} and Theorem \ref{thm:distance:error} together imply Theorem \ref{thm:distance:dependent}. It remains to justify Theorem \ref{thm:distance:error}.

\section{Proof of  Theorem \ref{thm:distance:error}}\label{section:distance:dependent:2} 
We will view $P$ as the matrix of the first $n$ rows of a symmetric matrix $A$ of size $N\times N$. We will first need a standard deviation lemma (see for instance \cite{RV-HW}).

\begin{lemma}[Hanson-Wright inequality]\label{lemma:HW} There exists a constant $C=C(K_0)$ depending on the sub-gaussian moment such that the following holds. 

\begin{enumerate}[(i)]

\item Let $A$ be a fixed $M\times M$ matrix. Consider a random vector $\Bx=(x_1,\dots,x_{M})$ where the entries are i.i.d. sub-gaussian of mean zero and variance one. Then

$$\P(|\Bx^T A \Bx-\E \Bx^T A \Bx|>t) \le 2\exp(-C\min (\frac{t^2}{\|A\|^2_{HS}}, \frac{t}{\|A\|_2})).$$

In particularly, for any $t>0$

$$\P\left (|\Bx^T A \Bx-\E \Bx^T A \Bx|> t \|A\|_{HS} \right) \le \exp(-Ct).$$

\item Let $A$ be a fixed $N \times M$ matrix. Consider a random vector $\Bx=(x_1,\dots,x_{M})$ where the entries are i.i.d. sub-gaussian of mean zero and variance one. Then

$$\P\left (|\|A \Bx\|_2 -\|A\|_{HS}|> t\|A\|_2 \right) \le \exp(- Ct^2).$$

\end{enumerate}

\end{lemma}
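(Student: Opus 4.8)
This is the classical Hanson-Wright inequality, and the plan is to reproduce its standard proof (as in \cite{RV-HW}); I sketch part (i), from which part (ii) and the ``in particular'' assertion follow by elementary manipulations. Writing $\Bx^T A\Bx - \E\,\Bx^T A\Bx = \sum_i A_{ii}(x_i^2-1) + \sum_{i\ne j} A_{ij}x_ix_j$, I would estimate the diagonal and off-diagonal sums separately and combine by a union bound. The diagonal sum is a sum of independent, centered, sub-exponential variables (each $x_i^2-1$ is sub-exponential because $\xi$ is sub-gaussian, with sub-exponential norm controlled by $K_0$), so Bernstein's inequality gives $\P(|\sum_i A_{ii}(x_i^2-1)| > t) \le 2\exp\big(-c\min(t^2/\sum_i A_{ii}^2,\, t/\max_i|A_{ii}|)\big)$, and since $\sum_i A_{ii}^2 \le \|A\|_{HS}^2$ and $\max_i|A_{ii}| \le \|A\|_2$, this has the required form.

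For the off-diagonal sum (we may assume $A$ has zero diagonal, which perturbs $\|A\|_{HS}$ and $\|A\|_2$ only by absolute constants) I would use the exponential-moment method. The standard decoupling inequality for quadratic forms (see \cite{RV-HW}) replaces $\sum_{i\ne j}A_{ij}x_ix_j$ inside the exponential by $\langle A^T\Bx,\Bx'\rangle = \sum_{i,j}A_{ij}x_ix_j'$, where $\Bx'$ is an independent copy of $\Bx$, at the cost of an absolute constant in the exponent. Conditioning on $\Bx$ and invoking sub-gaussianity of the coordinates of $\Bx'$ bounds $\E_{\Bx'}\exp(\lambda\langle A^T\Bx,\Bx'\rangle)$ by $\exp(C\lambda^2\|A^T\Bx\|_2^2)$. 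To handle the remaining $\E_\Bx\exp(C\lambda^2\|A^T\Bx\|_2^2)$ I would insert an auxiliary standard Gaussian vector $\Bg$ independent of $\Bx$, using $\exp(C\lambda^2\|v\|_2^2) = \E_\Bg\exp(\sqrt{2C}\,\lambda\langle \Bg,v\rangle)$ with $v = A^T\Bx$; exchanging the order of integration and now conditioning on $\Bg$, sub-gaussianity of $\Bx$ gives $\E_\Bx\exp(\sqrt{2C}\,\lambda\langle A\Bg,\Bx\rangle) \le \exp(C'\lambda^2\|A\Bg\|_2^2)$, so that we are left with $\E_\Bg\exp(C'\lambda^2\,\Bg^TA^TA\Bg)$, a genuine Gaussian quadratic form equal to $\prod_i(1-2C'\lambda^2\sigma_i^2)^{-1/2}$ with $\sigma_i$ the singular values of $A$. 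For $|\lambda| \le c/\|A\|_2$ (so each factor is bounded by a constant) this is at most $\exp(2C'\lambda^2\|A\|_{HS}^2)$.

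Combining, $\E\exp(\lambda\sum_{i\ne j}A_{ij}x_ix_j) \le \exp(C''\lambda^2\|A\|_{HS}^2)$ for $|\lambda| \le c/\|A\|_2$, and the Chernoff bound $\P(\sum_{i\ne j}A_{ij}x_ix_j > t) \le \exp(-\lambda t + C''\lambda^2\|A\|_{HS}^2)$ optimized over $\lambda \in [0,\,c/\|A\|_2]$ — interior optimum $\lambda \asymp t/\|A\|_{HS}^2$ when admissible, endpoint $\lambda = c/\|A\|_2$ otherwise — yields $\exp\big(-c'\min(t^2/\|A\|_{HS}^2,\, t/\|A\|_2)\big)$; running the same argument for $-A$ and combining with the diagonal estimate proves part (i), and the ``in particular'' form follows by substituting $t \mapsto t\|A\|_{HS}$ and using $\|A\|_{HS} \ge \|A\|_2$. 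For part (ii) I would apply part (i) to $A^TA$: since $\E\|A\Bx\|_2^2 = \|A\|_{HS}^2$, $\|A^TA\|_2 = \|A\|_2^2$ and $\|A^TA\|_{HS} \le \|A\|_2\|A\|_{HS}$, part (i) gives $\P(|\|A\Bx\|_2^2 - \|A\|_{HS}^2| > s) \le 2\exp\big(-c\min(s^2/(\|A\|_2\|A\|_{HS})^2,\, s/\|A\|_2^2)\big)$; then the elementary bound $|\|A\Bx\|_2 - \|A\|_{HS}| \le |\|A\Bx\|_2^2 - \|A\|_{HS}^2|/\|A\|_{HS}$ with $s = t\|A\|_2\|A\|_{HS}$ handles the range $t \le \|A\|_{HS}/\|A\|_2$, while for larger $t$ the event $|\|A\Bx\|_2-\|A\|_{HS}| > t\|A\|_2$ forces $\|A\Bx\|_2^2 - \|A\|_{HS}^2 \ge \tfrac12 t^2\|A\|_2^2$, and part (i) again gives a bound of the form $\exp(-ct^2)$.

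The main obstacle is the off-diagonal estimate, specifically controlling the exponential moment of the sub-gaussian quadratic form $\Bx^TA^TA\Bx$: one cannot diagonalize and factor as in the Gaussian case because $\Bx$ is not rotation invariant, and the remedy — inserting an auxiliary Gaussian vector to linearize the quadratic form twice and thereby reduce to a true Gaussian chaos with an explicit moment generating function — together with bookkeeping of the admissible range $|\lambda| \le c/\|A\|_2$ (which is exactly what produces the $\min$ of a sub-gaussian and a sub-exponential tail) is the crux of the argument.
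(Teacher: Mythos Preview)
Your sketch is correct and is precisely the argument of \cite{RV-HW}, which is all the paper does here: Lemma~\ref{lemma:HW} is stated without proof and attributed to that reference, so your proposal in fact supplies more detail than the paper itself. There is nothing to compare or correct.
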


Here the Hilbert-Schmidt norm of $A$ is defined as 

$$\|A\|_{HS} = \sqrt{\sum_{i,j}a_{ij}^2}.$$

By the first point of Lemma \ref{lemma:HW},

\begin{equation}\label{eqn:main-diag}
\P\left (|\Bz^T (PP^T)^{-1} \Bz -\tr(PP^T)^{-1}| > t \|(PP^T)^{-1}\|_{HS} \right) \le \exp(- Ct) 
\end{equation}

and

\begin{equation}\label{eqn:off-diag}
\P\left (|\Bz^T (PP^T)^{-1}P {\Bx^{[1]}}^T - \sum_i ((PP^T)^{-1}P)_{ii}| \ge t \|(PP^T)^{-1}P\|_{HS} \right ) \le  \exp(-Ct) .
\end{equation}

\subsection{Treatment for $\Bz (PP^T)^{-1}\Bz^T$.} To estimate \eqref{eqn:main-diag}, we need to consider the stable rank of the matrix $(PP^T)^{-1}$. By Theorem \ref{thm:least} (where $B$ is replaced by $P$) and by definition, we have the following important estimate with very high probability

\begin{equation}\label{eqn:sigma_n(P )}
\P(\sigma_n^{-2}(P) \le (\kappa' \eps)^{-2} N/m^2) \ge 1-\eps^{-\delta m} -\exp(-N^{\kappa'}).
\end{equation}

We next introduce another useful ingredient whose proof is deferred to Appendix \ref{section:interval}.

\begin{lemma}\label{lemma:manyvalues'} Let $C_1<C_2$ be given constants, then there exists a constant $C=C(C_1,C_2)$ such that the following holds with probability at least $1-N^{-\omega(1)}$: for any $\log^{C} N \le m \le N/C$, the intervals $[x_0+C_1m/N^{1/2},x_0+C_2m/N^{1/2}],0<x_0\le N/Cm$ contain $\Theta(m)$ singular values of $P$.
\end{lemma}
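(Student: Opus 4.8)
The plan is to translate the statement about the singular values of $P$ into one about the eigenvalues of $PP^{T}$, to recognise $PP^{T}$ as a principal submatrix of the square of the ambient Wigner matrix, and then to read off the local eigenvalue count from eigenvalue rigidity for that Wigner matrix, finally upgrading the pointwise estimate to one uniform in the interval by a net argument.

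First I would record that $\#\{i:\sigma_{i}(P)\in[u,v]\}=\#\{i:\lambda_{i}(PP^{T})\in[u^{2},v^{2}]\}$ and that, $P$ being the matrix of the first $n$ rows of a symmetric $N\times N$ Wigner matrix $A$, the matrix $PP^{T}$ is a principal $n\times n$ submatrix of $A^{2}$ (up to an additive correction of rank $\le 1$). Since the intra-row correlations of $P$ are of lower order, the empirical spectral measure of $N^{-1}PP^{T}$ converges to the Marchenko--Pastur law of ratio $n/N$, whose density is of constant order throughout the bulk; hence the deterministic number of singular values of $P$ in a bulk interval of width $\asymp m/\sqrt N$ is $\Theta(m)$, and the stated range of $x_{0}$ is exactly what keeps the window $[x_{0}+C_{1}m/\sqrt N,\ x_{0}+C_{2}m/\sqrt N]$ inside the bulk, away from the soft edge $\asymp\sqrt N$.

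To make this quantitative and simultaneous over the interval, I would invoke eigenvalue rigidity (the local semicircle law) for $A$, which is available because $\xi$ is subgaussian: with probability $1-N^{-\omega(1)}$ the counting function of $A$ agrees with its deterministic profile up to a polylogarithmic error on every interval at once. Transferring this through the interlacing bounds
$$\lambda_{k+m+1}(A^{2})\le\lambda_{k}(PP^{T})\le\lambda_{k}(A^{2}),\qquad 1\le k\le n,$$
(Cauchy interlacing for the principal submatrix, followed by the rank-$\le 1$ bound) gives $\#\{i:\sigma_{i}(P)\in I\}=\Theta(m)+O(m)+O(\log^{O(1)}N)$ for the window $I$. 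Choosing the constant $C=C(C_{1},C_{2})$ large lets $m\ge\log^{C}N$ absorb the polylogarithmic error; for the $\Theta(m)$ main term to survive the additive $O(m)$ loss of the interlacing one must either restrict to windows that are a sufficiently large multiple of $m/\sqrt N$ or, to obtain the statement for all $C_{1}<C_{2}$, establish the local law directly for the submatrix model $PP^{T}$ (equivalently for the symmetric dilation $\begin{pmatrix}0&P\\P^{T}&0\end{pmatrix}$), which removes the $A$-to-$PP^{T}$ comparison entirely and brings the error down to polylogarithmic. The remaining sliver $x_{0}\asymp m/\sqrt N$, where the window abuts the hard edge of the spectrum of $P$, I would handle separately: the lower bound on the number of small singular values already produced above, combined with Theorem \ref{thm:least} and its counterpart for the lower singular values $\sigma_{n-j}(P)$ (proved by the same Rudelson--Vershynin-type argument), pins $\sigma_{n}(P),\sigma_{n-1}(P),\dots$ at scale $\asymp m/\sqrt N$ and yields $\Theta(m)$ of them in $[c_{1}m/\sqrt N,\ c_{2}m/\sqrt N]$.

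Finally, since $t\mapsto\#\{i:\sigma_{i}(P)\le t\}$ is monotone and increases by only $\Theta(m)$ when $t$ moves by $m/\sqrt N$, and only a polynomial range of scales $m$ and positions $x_{0}$ matters, it suffices to verify the bound on a net of polynomial size and then union-bound the $N^{-\omega(1)}$ failure probabilities. The main obstacle is the one just isolated: a naive comparison with $A$ loses an additive error of the same order $m$ as the quantity to be estimated, so producing $\Theta(m)$ singular values in arbitrarily thin windows forces the local law to be run directly for $PP^{T}$ (or its dilation) — this, together with making the rigidity estimates uniform down to scale $m/\sqrt N$ with $m$ only polylogarithmic in $N$, is where the real work of the appendix lies.
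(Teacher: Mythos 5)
Your proposal follows essentially the same route as the paper's appendix: eigenvalue rigidity for the full Wigner matrix $A$ (Theorem \ref{theorem:concentration}), giving a window of width $\asymp m/\sqrt{N}$ in the bulk containing $\Theta(m)$ singular values of $A$, followed by iterated Cauchy interlacing (one row/column deletion at a time, using that $A_2A_2^{\ast}$ is a principal submatrix of $A_1A_1^{\ast}$) to transfer the count to $P$ with an additive loss of at most $m$. The subtlety you correctly isolate — that the interlacing loss is of the same order as the target — is resolved in the paper not by a local law for $PP^T$ but simply by arranging the rigidity count to give at least $2m$ singular values of $A$ in the window (so the constants are constrained rather than fully arbitrary), and your net argument is unnecessary since the rigidity statement is already simultaneous over all intervals.
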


To avoid notational complication, we will assume
\begin{equation}\label{eqn:p(m,N)}
p= N^{-\omega(1)}+ \exp(-\delta m) + \exp(-N^{\kappa'}),
\end{equation}
where the exponent $\omega(1)$ might vary in different contexts.

We deduce the following asymptotic behavior of the trace of $(PP^T)^{-1}$.

\begin{corollary}\label{cor:HSnorm} With probability at least $1-p$,  

$$\|(PP^T)^{-1}P\|_{HS}^2 = \tr ((PP^T)^{-1})  \asymp N/m .$$
\end{corollary}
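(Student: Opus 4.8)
The plan is to prove the two-sided bound $\tr((PP^T)^{-1}) \asymp N/m$ by sandwiching the trace between a matching upper and lower estimate, using the singular-value information from Theorem \ref{thm:least} (in the form \eqref{eqn:sigma_n(P )}) and from Lemma \ref{lemma:manyvalues'}. Write $s_1 \ge s_2 \ge \dots \ge s_n > 0$ for the singular values of $P$, so that $\tr((PP^T)^{-1}) = \sum_{i=1}^n s_i^{-2}$. The identity $\|(PP^T)^{-1}P\|_{HS}^2 = \tr((PP^T)^{-1}PP^T(PP^T)^{-1}) = \tr((PP^T)^{-1})$ is immediate, so the content is entirely in estimating $\sum_i s_i^{-2}$.

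For the lower bound, I would first note that $P$ is an $n \times N$ submatrix of a Wigner-type matrix, so with overwhelming probability $\|P\|_2 = O(\sqrt{N})$ by standard operator-norm bounds for matrices with independent subgaussian entries (this can be invoked as part of the event of probability $1-N^{-\omega(1)}$). On that event every $s_i^2 \le s_1^2 = O(N)$, hence $s_i^{-2} \gg 1/N$, and summing over the $n = N - m \asymp N$ indices gives $\sum_i s_i^{-2} \gg n/N \gg 1$; but this only yields $\gg 1$, not $\gg N/m$, so a sharper input is needed. The correct lower bound comes from the \emph{small} singular values: by Lemma \ref{lemma:manyvalues'} applied with suitable constants $C_1 < C_2$ (say $C_1 = \kappa'$ and $C_2 = 2\kappa'$, or whatever the statement permits), the interval $[\,\kappa' m/N^{1/2},\, 2\kappa' m/N^{1/2}\,]$ contains $\Theta(m)$ singular values of $P$, each of size $\asymp m/N^{1/2}$, hence with reciprocal square $\asymp N/m^2$; summing these $\Theta(m)$ contributions gives $\tr((PP^T)^{-1}) \gg m \cdot N/m^2 = N/m$.

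For the upper bound I would split $\sum_{i=1}^n s_i^{-2}$ dyadically according to the size of $s_i$. First, \eqref{eqn:sigma_n(P )} (with $\eps$ a small absolute constant) gives $s_n = \sigma_n(P) \gg m/\sqrt{N}$ with probability $1 - p$, so every term satisfies $s_i^{-2} \ll N/m^2$. Next, Lemma \ref{lemma:manyvalues'} controls how many singular values can lie in each window $[x_0 + C_1 m/N^{1/2}, x_0 + C_2 m/N^{1/2}]$ of width $\asymp m/\sqrt N$: it says $\Theta(m)$, in particular $O(m)$, lie in each such window, for all $0 < x_0 \le N/(Cm)$. Partitioning the range $[\sigma_n(P), O(\sqrt N)]$ into $O(N/m^2 \cdot \sqrt N / \sqrt N) = O(N/m)$... more carefully: partition $[0, \sqrt N]$ into $O(\sqrt N / (m/\sqrt N)) = O(N/m)$ consecutive intervals $I_k$ each of length $\asymp m/\sqrt N$, with left endpoint $x_k \asymp k m/\sqrt N$; each $I_k$ carries $O(m)$ singular values each of reciprocal square $O(N/(km)^2)$, so the total is $\sum_i s_i^{-2} \ll \sum_{k \ge 1} m \cdot \frac{N}{k^2 m^2} = \frac{N}{m}\sum_{k\ge1}k^{-2} \ll N/m$, where the smallest interval ($k \asymp 1$, corresponding to $s_i \asymp \sigma_n(P) \asymp m/\sqrt N$) already contributes the dominant $\asymp N/m$ and all larger $k$ contribute a convergent tail. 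All the singular values are accounted for since $\sigma_n(P) \ge$ the first interval's left endpoint and $\|P\|_2 \le$ the last interval's right endpoint, on the good event.

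The main obstacle, and the step requiring the most care, is making the dyadic/interval partition argument in the upper bound rigorous: one must check that Lemma \ref{lemma:manyvalues'}, which as stated guarantees $\Theta(m)$ singular values \emph{in} each window $[x_0+C_1m/N^{1/2}, x_0+C_2m/N^{1/2}]$, can be bootstrapped into an upper bound of the form ``at most $O(m)$ singular values in any such window'' uniformly over the relevant range of $x_0$ (this is the direction actually used), and that the constraint $x_0 \le N/(Cm)$ in Lemma \ref{lemma:manyvalues'} is compatible with covering the whole spectrum up to $\|P\|_2 = O(\sqrt N)$ — note $N/(Cm) \gg \sqrt N$ precisely when $m \ll \sqrt N$, which will need either this restriction on $m$ or a separate crude bound (such as the $O(m \cdot N/m^2) = O(N/m)$ estimate, or simply $\|P\|_2^{-2} \cdot n$) for the range $s_i \gtrsim \sqrt N/\sqrt m$ where the lemma no longer applies. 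Everything else — the operator-norm bound, the Hanson–Wright–free identity $\|(PP^T)^{-1}P\|_{HS}^2 = \tr((PP^T)^{-1})$, and collecting the failure probabilities into the single quantity $p$ of \eqref{eqn:p(m,N)} — is routine.
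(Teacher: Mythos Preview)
Your proposal is correct and follows essentially the same approach as the paper: the lower bound comes from the $\Theta(m)$ singular values that Lemma~\ref{lemma:manyvalues'} places in a window of size $\asymp m/\sqrt{N}$ near the bottom of the spectrum, and the upper bound comes from partitioning the spectral range $[\sigma_n(P),\Theta(\sqrt{N})]$ into $O(N/m)$ intervals of length $\asymp m/\sqrt{N}$, each carrying $O(m)$ singular values, and summing the resulting geometric-type series. Your concern about extracting the $O(m)$ upper bound from Lemma~\ref{lemma:manyvalues'} is unfounded---the $\Theta(m)$ conclusion already contains both directions---and the paper handles the edge-of-bulk range in the same informal way you suggest (covering $[cm/\sqrt{N},C_1 m/\sqrt{N}]$ and $[C_1 m/\sqrt{N},\Theta(\sqrt{N})]$ separately, absorbing any boundary issues into the implied constants).
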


\begin{proof} For the lower bound, let $\sigma_i,\dots, \sigma_j$ be the  $\Theta(m)$ singular values of $P$ lying in the interval $[m/N^{1/2},Cm/N^{1/2}]$. Then 

$$ \tr((PP^T)^{-1}) \ge \sum_{i\le k\le j} \sigma_i^{-2} \gg m N/m^2 = N/m.$$

For the upper bound, first of all by \eqref{eqn:sigma_n(P )}, it suffices to assume that all of the singular values of $P$ are at least $cm/N^{1/2}$ for some sufficiently small $c$.

First, it is clear that the inverval $[cm/N^{1/2}, C_1 m/N^{1/2}]$ contains $O(m)$ singular values of $P$.  
For the remaining interval $[C_1m/N^{1/2},\Theta(N/N^{1/2})]$ (where we note that the top singular value of $P$ has order $\Theta(N^{1/2})$ with probability $1-\exp(-CN)$) we divide it into intervals $I_k$ of length $(C_2-C_1)m/N^{1/2}$ each with $1\le k\le \Theta(N/m)$. By Lemma \ref{lemma:manyvalues'} the number of singular vectors in each interval is proportional to $m$. As such

$$\tr((PP^T)^{-1})=\sum_{i} \sigma_i^{-2}  = \sum_k \sum_{i\in I_k} \sigma_i^{-2} \ll \sum_k m N/k^2m^2 \ll N/m.$$

\end{proof}

By Corollary  \ref{cor:HSnorm} and \eqref{eqn:sigma_n(P )},

\begin{equation}\label{eqn:main-diag'}
\P(\frac{\tr(PP^T)^{-1}}{\|(PP^T)^{-1}\|_2} \gg m) \ge 1- p.
\end{equation}

Notice that 

\begin{align*}
\|(PP^T)^{-1}\|_{HS}=\sqrt{\sum_i \sigma_i^2((PP^T)^{-1}) } & \le \sqrt{\|(PP^T)^{-1}\|_2 \sum_i \sigma_i((PP^T)^{-1})}\\ 
&= \sqrt{\|(PP^T)^{-1}\|_2} \sqrt{\tr((PP^T)^{-1})} .
\end{align*}

As a consequence, with probability at least $1-2p$

\begin{equation}\label{eqn:main-diag''} 
\frac{\tr((PP^T)^{-1})}{\|(PP^T)^{-1}\|_{HS}} \ge \frac{\sqrt{\tr((PP^T)^{-1})}}{\sqrt{\|(PP^T)^{-1}\|_2}}  \gg m^{1/2},
\end{equation}

where we used \eqref{eqn:main-diag'} in the last estimate.

Combining \eqref{eqn:main-diag},  \eqref{eqn:main-diag'} and  \eqref{eqn:main-diag''},  we have learned that

\begin{lemma}\label{lemma:denominator} There exists a positive constant $C$ such that the following holds for any $t>0$,

$$\P\Big(|\Bz^T (PP^T)^{-1} \Bz - \tr((PP^T)^{-1})|\ge \frac{Ct}{m^{1/2}} \tr((PP^T)^{-1})\Big) \le p+ \exp(-t/C).$$

Consequently, with probability at least $1-(p+\exp(-c \sqrt{m}))$

$$\Bz^T (PP^T)^{-1} \Bz \asymp \tr((PP^T)^{-1}) \asymp \frac{N}{m}.$$
\end{lemma}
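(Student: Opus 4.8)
The plan is to combine the Hanson--Wright estimate \eqref{eqn:main-diag} with the spectral comparison \eqref{eqn:main-diag''}, using the crucial (and easily overlooked) fact that $\Bz$ is independent of $P$. Indeed, the entries of $\Bz=(a_{21},\dots,a_{(n+1)1})^T$ are, by symmetry, among the entries $\{a_{1k}:2\le k\le n+1\}$ of the first row of $A$, while the entries of $P$ all carry both indices $\ge 2$; since the upper-triangular entries of $A$ are jointly independent, these two families are disjoint and hence $\Bz$ is independent of $P$. In particular, conditionally on $P$ the vector $\Bz$ still has i.i.d.\ mean-zero, variance-one, sub-gaussian coordinates, so the conditional form of Lemma \ref{lemma:HW}(i) applies verbatim.

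First I would fix a realization of $P$ in the event $\CE$ on which the conclusion of \eqref{eqn:main-diag''} holds, namely $\tr((PP^T)^{-1}) \gg m^{1/2}\,\|(PP^T)^{-1}\|_{HS}$; by Corollary \ref{cor:HSnorm} (together with \eqref{eqn:sigma_n(P )}) this has probability at least $1-2p$. For such a $P$, Lemma \ref{lemma:HW}(i) applied to the fixed matrix $(PP^T)^{-1}$ and the random vector $\Bz$ gives, for every $s>0$,
$$\P\big(|\Bz^T(PP^T)^{-1}\Bz - \tr((PP^T)^{-1})| > s\,\|(PP^T)^{-1}\|_{HS} \;\big|\; P\big) \le \exp(-Cs).$$
On $\CE$ we have $s\,\|(PP^T)^{-1}\|_{HS} \le (C_0 s/m^{1/2})\,\tr((PP^T)^{-1})$ for an absolute constant $C_0$, so with $t=C_0 s$ the right-hand side becomes $\exp(-t/C')$. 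Integrating over $P\in\CE$ and adding the probability $2p$ of $\CE^c$ gives the displayed tail bound, after renaming the constant $C$ and absorbing $2p$ into $p$ (which already dominates constant multiples of itself).

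For the ``consequently'' clause I would specialize $t=c\sqrt m$ with $c$ small enough that $Ct/m^{1/2}=Cc\le \tfrac12$. Then with probability at least $1-(p+\exp(-c\sqrt m/C))\ge 1-(p+\exp(-c'\sqrt m))$ one has $|\Bz^T(PP^T)^{-1}\Bz - \tr((PP^T)^{-1})|\le \tfrac12\tr((PP^T)^{-1})$, hence $\Bz^T(PP^T)^{-1}\Bz\asymp \tr((PP^T)^{-1})$; intersecting with the event of Corollary \ref{cor:HSnorm} (at the cost of another $p$) replaces $\tr((PP^T)^{-1})$ by $\Theta(N/m)$.

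The substantive work has already been carried out in Lemma \ref{lemma:manyvalues'}, Theorem \ref{thm:least} and Corollary \ref{cor:HSnorm}, which pin down the stable rank of $(PP^T)^{-1}$; what remains is only a conditional application of Hanson--Wright and a union bound. The one point demanding care is the order of conditioning: the bad event underlying \eqref{eqn:main-diag''} is $\sigma(P)$-measurable whereas the Hanson--Wright fluctuation lives in $\Bz$, so one must condition on $P$ first and only then invoke concentration in $\Bz$ --- and it is precisely the independence of $\Bz$ and $P$ in the symmetric model that makes this step legitimate.
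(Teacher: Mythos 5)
Your proposal is correct and follows the same route as the paper, which obtains the lemma precisely by combining the conditional Hanson--Wright bound \eqref{eqn:main-diag} with the stable-rank estimates \eqref{eqn:main-diag'} and \eqref{eqn:main-diag''} derived from Corollary \ref{cor:HSnorm}. Your explicit verification that $\Bz$ is independent of $P$ (so that Hanson--Wright may be applied conditionally) and the choice $t=c\sqrt m$ for the ``consequently'' clause make the conditioning step, which the paper leaves implicit, fully rigorous.
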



\subsection{Treatment for $\Bz^T (PP^T)^{-1}P\Bx^{[1]}$} To show this quantity small, we will divide the treatment into two main steps.

{\bf Step 1: Set-up.}  We will present here the calculation for $((PP^T)^{-1}P)_{11}$, the formula for other $((PP^T)^{-1}P)_{ii}$ follows the same line. We first recall the definition of $\By_i$ at the beginning of Section \ref{section:distance:dependent:1}. For simplicity, we will drop the super index $[1]$ in all $\By_i$ (as we can view $P$ as the submatrix of the first $n$ rows of $A$). One interprets 

$$((PP^T)^{-1}P)_{11} = \langle  \row_1((PP^T)^{-1}),\Bc_1(P ) \rangle.$$

Because the matrix $PP^T$ depends on $\col_1(P )$, we need to separate the dependences. First of all, we write $PP^T$ as a rank-one perturbation $PP^T =  QQ^T + \col_1(P ) \col_1^T (P )$, where $Q$ is the matrix obtained from $P$ by removing its first column. By applying the formula for inverting rank-one perturbed matrices (which will be reused many times)

\begin{align}\label{eqn:rank-one}
(PP^T)^{-1} & =( QQ^T + \col_1(P ) \col_1^T (P ))^{-1} \nonumber \\ 
&= (QQ^T)^{-1}-\frac{ [(QQ^T)^{-1} \col_1(P )][ (\col_1(P ))^T  ((QQ)^T )^{-1}]}{ 1+  (\col_1(P ))^T  (QQ^T )^{-1} \col_1(P )}. 
\end{align}

It follows that 

\begin{align}\label{eqn:PP*:general}
(PP^T)^{-1} \col_1(P ) &= (QQ^T)^{-1} \col_1(P )-\frac{1}{ 1+  (\col_1(P ))^T  (QQ^T )^{-1} \col_1(P )} [(QQ^T)^{-1} \col_1(P )][ (\col_1(P ))^T  QQ^T )^{-1}] \col_1( P) \nonumber \\
&=  \frac{1}{ 1+  (\col_1(P ))^T  (QQ^T )^{-1} \col_1(P )}  (QQ^T)^{-1} \col_1(P ).
\end{align}

In particular,

$$((PP^T)^{-1}P)_{11} = \langle  \row_1((PP^T)^{-1}),\Bc_1(P ) \rangle =  \frac{1}{ 1+  (\col_1(P ))^T  (QQ^T )^{-1} \col_1(P )}  \langle \row_1((QQ^T)^{-1}), \col_1( P) \rangle .$$

Now the matrix $QQ^T$ still depends on $\col_1(P )$, so we are going to remove the dependence once more, this time using Schur complement formula (see \cite{HJ}).  

\begin{fact} Let  $M= \begin{pmatrix} X & Y \\ Y^T & Z \end{pmatrix}$. Assuming invertibility whenever necessary, we have 

$$ M^{-1}= \begin{pmatrix} (X -Y Z^{-1} Y^T)^{-1}& -X^{-1} Y (Z- Y^T X^{-1} Y)^{-1}  \\ (-X^{-1} Y (Z- Y^T X^{-1} Y)^{-1})^T  & (Z- Y^T X^{-1} Y)^{-1}\end{pmatrix}.$$
\end{fact}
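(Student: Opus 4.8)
The plan is to prove the Schur complement formula by block Gaussian elimination, carried out in both orders so that each diagonal block of $M^{-1}$ comes out directly in the advertised form.

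First, under the standing hypothesis that $X$ and the Schur complement $Z-Y^TX^{-1}Y$ are invertible, I would write down the block-triangular factorization
\[ M=\begin{pmatrix} X & Y \\ Y^T & Z\end{pmatrix}
=\begin{pmatrix} I & 0 \\ Y^TX^{-1} & I\end{pmatrix}
\begin{pmatrix} X & 0 \\ 0 & Z-Y^TX^{-1}Y\end{pmatrix}
\begin{pmatrix} I & X^{-1}Y \\ 0 & I\end{pmatrix}, \]
which is checked by multiplying the three factors. The outer factors are block-unitriangular, so their inverses are obtained by negating the off-diagonal block, and the middle factor is block-diagonal; inverting and multiplying back in the reverse order yields
\[ M^{-1}=\begin{pmatrix} X^{-1}+X^{-1}Y(Z-Y^TX^{-1}Y)^{-1}Y^TX^{-1} & -X^{-1}Y(Z-Y^TX^{-1}Y)^{-1} \\ -(Z-Y^TX^{-1}Y)^{-1}Y^TX^{-1} & (Z-Y^TX^{-1}Y)^{-1}\end{pmatrix}. \]
This already gives the $(2,2)$ block and the two off-diagonal blocks in the stated form, the $(2,1)$ block being the transpose of the $(1,2)$ block because $M$ (hence $X$ and $Z-Y^TX^{-1}Y$) is symmetric.

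It remains only to rewrite the $(1,1)$ block. For this I would run the same elimination with the roles of the two blocks interchanged: assuming $Z$ and $X-YZ^{-1}Y^T$ invertible,
\[ M=\begin{pmatrix} I & YZ^{-1} \\ 0 & I\end{pmatrix}
\begin{pmatrix} X-YZ^{-1}Y^T & 0 \\ 0 & Z\end{pmatrix}
\begin{pmatrix} I & 0 \\ Z^{-1}Y^T & I\end{pmatrix}, \]
and inverting exactly as before produces $(1,1)$ block $(X-YZ^{-1}Y^T)^{-1}$, which is the remaining entry of the claimed matrix. Equivalently one could quote the Sherman--Morrison--Woodbury identity to pass between $X^{-1}+X^{-1}Y(Z-Y^TX^{-1}Y)^{-1}Y^TX^{-1}$ and $(X-YZ^{-1}Y^T)^{-1}$.

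The argument is entirely routine and there is no genuine obstacle; the only point deserving a word is the hypothesis ``assuming invertibility whenever necessary'': the two factorizations require $X$, $Z$, $Z-Y^TX^{-1}Y$ and $X-YZ^{-1}Y^T$ all invertible, and under these hypotheses the two resulting expressions for $M^{-1}$ are automatically consistent. As a fallback that avoids invoking both factorizations, one can instead multiply the claimed right-hand side by $M$ and verify block-by-block that the product is the identity, the only nontrivial check being in the $(1,1)$ entry, where the Woodbury identity is used.
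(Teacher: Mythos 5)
Your argument is correct and complete: both block-LDU factorizations multiply out to $M$, the inverses of the unitriangular factors are as you say, and the symmetry of $X$ and of the Schur complement $Z-Y^TX^{-1}Y$ justifies writing the $(2,1)$ block as the transpose of the $(1,2)$ block. The paper itself offers no proof of this Fact — it is stated as known with a citation to Horn and Johnson — so there is nothing to compare against beyond noting that your block Gaussian elimination, run in both orders (or equivalently one factorization plus the Woodbury identity for the $(1,1)$ block), is precisely the standard derivation the citation points to, and that your remark about which invertibility hypotheses are actually needed ($X$, $Z$, and both Schur complements) is an accurate reading of the paper's ``assuming invertibility whenever necessary.''
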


Note that $QQ^T$ can be written as $\begin{pmatrix} \By_1^{[1]} (\By_1^{[1]})^T & \By_1^{[1]} R^T \\ R (\By_1^{[1]})^T & RR^T \end{pmatrix}$, where $R$ is the matrix obtained from $Q$ by removing its first row $\By_1^{[1]}$. From now on, for short we set 

$$x^2 := \By_1^{[1]} (\By_1^{[1]})^T$$

and

$$d^2:=x^2-  \By_1^{[1]} R^T   (RR^T )^{-1} R (\By_1^{[1]})^T.$$

To begin with, the top left corner is

\begin{align*}
(X -Y Z^{-1} Y^T)^{-1} &= (x^2 -  \By_1^{[1]} R^T  (RR^T)^{-1}R (\By_1^{[1]})^T)^{-1} = d^{-2}.
\end{align*}

Next, the bottom right Schur complement then can be calculated as 

$$(Z- Y^T X^{-1} Y)^{-1} = [RR^T - \frac{1}{x^2} R (\By_1^{[1]})^T  \By_1^{[1]}R^T]^{-1}.$$

Note that this again can be considered as rank-one perturbation as in \eqref{eqn:rank-one}, 

\begin{align*}
[RR^T - \frac{1}{x^2} R (\By_1^{[1]})^T  \By_1^{[1]}R^T]^{-1}&= (RR^T)^{-1} +\frac{1}{x^2} \frac{ [(RR^T)^{-1} R (\By_1^{[1]})^T ][ \By_1^{[1]} R^T (RR^T )^{-1}]}{ 1- \frac{1}{x^2} \By_1^{[1]} R^T   (RR^T )^{-1} R (\By_1^{[1]})^T }\\
&=  (RR^T)^{-1} +\frac{ [(RR^T)^{-1} R (\By_1^{[1]})^T ][ \By_1^{[1]} R^T (RR^T )^{-1}]}{d^2 }.
\end{align*}

Similarly, the top right Schur complement is

\begin{align*}
-X^{-1} Y (Z- Y^T X^{-1} Y)^{-1} &= - \frac{1}{x^2}   \By_1^{[1]} R^T \left [(RR^T)^{-1} +\frac{ [(RR^T)^{-1} R (\By_1^{[1]})^T ][ \By_1^{[1]} R^T (RR^T )^{-1}]}{d^2}\right]\\
&= - \frac{1}{x^2}   \By_1^{[1]} R^T (RR^T)^{-1} +(\frac{1}{x^2} -\frac{1}{d^2})  \By_1^{[1]} R^T (RR^T)^{-1}\\
&=  -d^{-2}   \By_1^{[1]} R^T (RR^T)^{-1}.
\end{align*}

Putting together,

\begin{equation}\label{eqn:QQ^T}
(QQ^T)^{-1} = \begin{pmatrix}d^{-2} & - d^{-2} \By_1^{[1]} R^T (RR^T)^{-1} \\ - d^{-2}  (RR^T)^{-1}  R (\By_1^{[1]})^{T}
 & (RR^T)^{-1} +d^{-2}[(RR^T)^{-1} R (\By_1^{[1]})^T ][ \By_1^{[1]} R^T (RR^T )^{-1}] \end{pmatrix}
 \end{equation}

where we note that the involved matrices are invertible by Theorem \ref{thm:singularity} with extremely large probability.


It follows that

\begin{align*}
\langle \row_1((QQ^T)^{-1}), \col_1( P) \rangle = d^{-2} [\col_1(P )_1 - \By_1^{[1]} R^T (RR^T)^{-1} \col_1(P )^{[1]}].
\end{align*}

Also, the denominator of \eqref{eqn:PP*:general} can be written as

\begin{align*}
 (\col_1(P ))^T  (QQ^T )^{-1} \col_1(P ) &=  d^{-2}[(\col_1(P )_1)^2 -  2\col_1(P )_1 (\col_1(P )^{[1]})^T (RR^T)^{-1} R (\By_1^{[1]})] \\
&+ (\col_1(P )^{[1]})^T (RR^T)^{-1} \col_1(P )^{[1]} \\
&+ d^{-2} [(\col_1(P )^{[1]})^T (RR^T)^{-1} R (\By_1^{[1]})^T ][ \By_1^{[1]} R^T (RR^T )^{-1} \col_1(P )^{[1]}]\\
&= (\col_1(P )^{[1]})^T (RR^T)^{-1} \col_1(P )^{[1]} + d^{-2}[\col_1(P )_1 - \By_1^{[1]} R^T (RR^T)^{-1} \col_1(P )^{[1]}]^2.
\end{align*}

Combining the formulas, we arrive at the following.

\begin{lemma}\label{lemma:formula:diagonal}
We have 

$$((PP^T)^{-1}P)_{11} =\frac{\col_1(P )_1 - \By_1^{[1]} R^T (RR^T)^{-1} \col_1(P )^{[1]}}{d^2 ( 1+(\col_1(P )^{[1]})^T (RR^T)^{-1} \col_1(P )^{[1]})  +  (\col_1(P )_1 - \By_1^{[1]} R^T (RR^T)^{-1} \col_1(P )^{[1]})^2 }.$$
\end{lemma}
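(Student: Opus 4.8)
The plan is simply to assemble the ingredients already prepared in the preceding discussion; the lemma is the synthesis of \eqref{eqn:PP*:general} and \eqref{eqn:QQ^T}. Reading off the first coordinate in \eqref{eqn:PP*:general} and recalling that $((PP^T)^{-1}P)_{11}=\langle\row_1((PP^T)^{-1}),\col_1(P)\rangle$, one gets
$$((PP^T)^{-1}P)_{11} = \frac{\langle \row_1((QQ^T)^{-1}),\col_1(P)\rangle}{1 + (\col_1(P))^T (QQ^T)^{-1}\col_1(P)},$$
so everything reduces to evaluating the numerator and the scalar in the denominator using the explicit block form \eqref{eqn:QQ^T} of $(QQ^T)^{-1}$.

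First I would pair the first row of \eqref{eqn:QQ^T} against $\col_1(P)=(\col_1(P)_1,\col_1(P)^{[1]})$ to obtain
$$\langle \row_1((QQ^T)^{-1}),\col_1(P)\rangle = d^{-2}\bigl[\col_1(P)_1 - \By_1^{[1]} R^T (RR^T)^{-1}\col_1(P)^{[1]}\bigr].$$
Next I would expand the quadratic form $(\col_1(P))^T (QQ^T)^{-1}\col_1(P)$ block by block; the $d^{-2}$-contributions (from the top-left scalar block, the two off-diagonal blocks, and the rank-one correction in the bottom-right block) collect into a single perfect square, giving
$$(\col_1(P))^T (QQ^T)^{-1}\col_1(P) = (\col_1(P)^{[1]})^T (RR^T)^{-1}\col_1(P)^{[1]} + d^{-2}\bigl[\col_1(P)_1 - \By_1^{[1]} R^T (RR^T)^{-1}\col_1(P)^{[1]}\bigr]^2.$$
Both of these identities are already displayed in the text above, so this step amounts to confirming the stated cancellations.

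Finally, substituting the last two displays into the quotient and multiplying numerator and denominator by $d^2$ cancels the common factor $d^{-2}$ and yields precisely the formula in the statement; the cases $((PP^T)^{-1}P)_{ii}$ for $i\neq 1$ follow verbatim after relabeling the rows and columns.

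As this is a pure linear-algebra identity there is no genuine obstacle. The only point requiring attention is the bookkeeping in the Schur-complement step: one must keep the block indexing of $QQ^T$ (with top-left scalar block $x^2=\By_1^{[1]}(\By_1^{[1]})^T$ and bottom-right block $RR^T$) consistent throughout, and check that the rank-one correction produced by the bottom-right Schur complement combines with the cross terms in the quadratic form to give exactly the square $\bigl[\col_1(P)_1 - \By_1^{[1]}R^T(RR^T)^{-1}\col_1(P)^{[1]}\bigr]^2$ rather than some stray multiple of it. Everything else is direct substitution, and the invertibility of all the matrices involved holds with overwhelming probability by Theorem \ref{thm:singularity}.
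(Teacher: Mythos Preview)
Your proposal is correct and follows exactly the paper's approach: the lemma is obtained by combining \eqref{eqn:PP*:general} with the block form \eqref{eqn:QQ^T}, computing $\langle\row_1((QQ^T)^{-1}),\col_1(P)\rangle$ and $(\col_1(P))^T(QQ^T)^{-1}\col_1(P)$ just as you describe, and then clearing the factor $d^{-2}$. The paper presents these two computations in the text immediately preceding the lemma and states the result as their combination, precisely as you outline.
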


To proceed further, note that $d^2=x^2 -\By_1^{[1]} R^T   (RR^T )^{-1} R (\By_1^{[1]})^T$ is just the distance from $\By_1^{[1]}$ to the subspace generated by the rows of $R$, and thus is well concentrated around $(N-1)-(n-1) = m$ by Theorem \ref{thm:distance:independent}, that is with probability at least $1-\exp(-cm)$

\begin{equation}\label{eqn:d}
d^2 \gg m.
\end{equation}

\begin{corollary}\label{cor:estimate:1}
With probability at least $1-(p+\exp(-c\sqrt{m}))$,

$$|((PP^T)^{-1}P)_{11}| = O(\frac{1}{\sqrt{N}}).$$

Consequently,

$$\sum_i (PP^T)^{-1}P)_{ii} = O(\frac{n}{\sqrt{N}}) = O(\sqrt{N}).$$
\end{corollary}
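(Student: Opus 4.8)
The plan is to estimate the numerator and denominator of the expression for $((PP^T)^{-1}P)_{11}$ given in Lemma \ref{lemma:formula:diagonal} separately, and then take a union bound over $i$. Write
$$N_1 := \col_1(P)_1 - \By_1^{[1]} R^T (RR^T)^{-1} \col_1(P)^{[1]}, \qquad D_1 := d^2\big(1 + (\col_1(P)^{[1]})^T (RR^T)^{-1} \col_1(P)^{[1]}\big) + N_1^2,$$
so that $((PP^T)^{-1}P)_{11} = N_1/D_1$. The key observation is that $\col_1(P)^{[1]} = (a_{31},a_{41},\dots,a_{(n+1)1})^T$ and $\By_1^{[1]}$ (the first row of $P$ with its first column removed) are both independent of $R$ (the matrix obtained from $P$ by removing its first row \emph{and} first column, which lives entirely inside the lower-right block of $A$), and moreover $\col_1(P)^{[1]}$ and $\By_1^{[1]}$ share no common entries with each other except that $\col_1(P)_1 = a_{21}$ appears in neither. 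So conditionally on $R$ we have genuine independence to exploit via Hanson-Wright.

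First I would handle the denominator from below. By \eqref{eqn:d} we have $d^2 \gg m$ with probability $1 - \exp(-cm)$. For the factor $1 + (\col_1(P)^{[1]})^T (RR^T)^{-1} \col_1(P)^{[1]}$, I would apply Lemma \ref{lemma:HW}(i) conditionally on $R$: the mean is $\tr((RR^T)^{-1})$, which by the same argument as Corollary \ref{cor:HSnorm} (applied to $R$, which is an $(n-1) \times (N-2)$ submatrix of a symmetric matrix, so Theorem \ref{thm:least} and Lemma \ref{lemma:manyvalues'} apply with $m$ replaced by $m-1 \asymp m$) satisfies $\tr((RR^T)^{-1}) \asymp N/m$, and the fluctuation is $O(t \|(RR^T)^{-1}\|_{HS})$ with probability $1 - \exp(-Ct)$; as in \eqref{eqn:main-diag''}, $\|(RR^T)^{-1}\|_{HS} \le \sqrt{\|(RR^T)^{-1}\|_2 \tr((RR^T)^{-1})} \ll \sqrt{N/m \cdot N/m^2}\cdot \sqrt{m}$, wait — more simply, $\tr((RR^T)^{-1})/\|(RR^T)^{-1}\|_{HS} \gg \sqrt{m}$, so taking $t \asymp \sqrt{m}$ gives $(\col_1(P)^{[1]})^T (RR^T)^{-1} \col_1(P)^{[1]} \asymp \tr((RR^T)^{-1}) \asymp N/m$ with probability $1 - (p + \exp(-c\sqrt m))$. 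Hence the denominator is $\gg m \cdot N/m = N$ (the $N_1^2$ term only helps), while for the numerator $N_1$, conditionally on $R$ it is a linear form: $\col_1(P)_1 = a_{21}$ is a single subgaussian variable, and $\By_1^{[1]} R^T (RR^T)^{-1} \col_1(P)^{[1]}$ is, conditionally on both $R$ and $\col_1(P)^{[1]}$, a linear combination $\langle v, \By_1^{[1]}\rangle$ with $v = (RR^T)^{-1} R^T$ applied appropriately — wait, it is $\By_1^{[1]} w$ with $w = R^T (RR^T)^{-1} \col_1(P)^{[1]}$, so $\|w\|_2^2 = (\col_1(P)^{[1]})^T (RR^T)^{-1} R R^T (RR^T)^{-1} \col_1(P)^{[1]} = (\col_1(P)^{[1]})^T (RR^T)^{-1} \col_1(P)^{[1]} \asymp N/m$ on the good event. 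Thus $N_1$ is a subgaussian linear form with variance proxy $O(1 + N/m) = O(N/m)$, giving $|N_1| \ll \sqrt{(N/m)} \cdot s$ with probability $1 - \exp(-cs^2)$; choosing $s \asymp \sqrt{m}$ gives $|N_1| = O(\sqrt N)$ with probability $1 - \exp(-cm)$.

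Combining: with probability at least $1 - (p + \exp(-c\sqrt m))$ we get $|((PP^T)^{-1}P)_{11}| = |N_1|/|D_1| \ll \sqrt N / N = 1/\sqrt N$. The same reasoning applies verbatim to $((PP^T)^{-1}P)_{ii}$ for each $i$ (relabel rows/columns), so a union bound over the $n \le N$ indices gives that all of them are $O(1/\sqrt N)$ simultaneously with probability $1 - N(p + \exp(-c\sqrt m)) = 1 - p'$ (absorbing the factor $N$ into the $N^{-\omega(1)}$ term of $p$, valid since $m \ge \log^{\kappa^{-1}} N$ makes $\exp(-c\sqrt m)$ and $\exp(-\delta m)$ both $N^{-\omega(1)}$), whence $\sum_i ((PP^T)^{-1}P)_{ii} = O(n/\sqrt N) = O(\sqrt N)$. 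The main obstacle is bookkeeping the conditional independence correctly — one must check that removing the first row and column of $P$ to form $R$ genuinely decouples it from the vectors $\By_1^{[1]}$ and $\col_1(P)^{[1]}$ (true because $A$ is symmetric and $R$ sits in rows/columns $3,\dots,n+1$ by $3,\dots,N$, disjoint from row $2$ and column $2$) — and verifying that Theorem \ref{thm:least} and Corollary \ref{cor:HSnorm} transfer to the slightly smaller matrix $R$ without loss; everything else is a routine application of Hanson-Wright conditionally on $R$.
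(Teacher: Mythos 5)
There is a genuine gap in your treatment of the numerator, stemming from a false independence claim. You assert that $\col_1(P)^{[1]}$ and $\By_1^{[1]}$ ``share no common entries,'' but this is exactly where the symmetry of $A$ bites: with $P$ occupying rows $2,\dots,n+1$ and columns $2,\dots,N$ of $A$, one has $\col_1(P)^{[1]}=(a_{32},\dots,a_{(n+1)2})^T=(a_{23},\dots,a_{2(n+1)})^T$, which is precisely the first $n-1$ coordinates of $\By_1^{[1]}=(a_{23},\dots,a_{2N})$. Consequently, after conditioning on $R$ and on $\col_1(P)^{[1]}$, the quantity $\By_1^{[1]}\,R^T(RR^T)^{-1}\col_1(P)^{[1]}$ is \emph{not} a mean-zero linear form in fresh randomness; it is a quadratic form in the entries of $\By_1^{[1]}$ whose mean is $\sum_j\big(R^T(RR^T)^{-1}\big)_{jj}=\sum_j\big((RR^T)^{-1}R\big)_{jj}$. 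A priori this trace is only bounded by $\sqrt{n}\,\|(RR^T)^{-1}R\|_{HS}\asymp\sqrt{n}\sqrt{N/m}\asymp N/\sqrt{m}$, which would give $|N_1|/D_1\ll 1/\sqrt{m}$ rather than $O(1/\sqrt{N})$. Showing that this trace is actually $O(\sqrt{N})$ is the content of Corollary \ref{cor:estimate:2}, which the paper deduces \emph{from} the present corollary (applied to $R$); so your route is circular unless you set up an explicit induction on the matrix size, which you do not.

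The paper avoids the numerator entirely. Writing the expression of Lemma \ref{lemma:formula:diagonal} as $u/(a+u^2)$ with $a=d^2\big(1+(\col_1(P)^{[1]})^T(RR^T)^{-1}\col_1(P)^{[1]}\big)$, the elementary inequality $a+u^2\ge 2|u|\sqrt{a}$ gives
\begin{equation*}
\frac{|u|}{a+u^2}\le\frac{1}{2\sqrt{a}},
\end{equation*}
and then only the denominator needs to be controlled: $d^2\gg m$ by \eqref{eqn:d} and $(\col_1(P)^{[1]})^T(RR^T)^{-1}\col_1(P)^{[1]}\asymp N/m$ by Lemma \ref{lemma:denominator} applied to $R$, so $a\gg N$. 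Your lower bound on the denominator is essentially this same argument and is fine (note that here Hanson--Wright is legitimately applicable since $\col_1(P)^{[1]}$ is genuinely independent of $R$); it is only the attempt to bound $N_1$ separately that fails.
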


\begin{proof} By Cauchy-Schwarz, 
\begin{align*} 
&\frac{|\col_1(P )_1 - \By_1^{[1]} R^T (RR^T)^{-1} \col_1(P )^{[1]}|}{d^2 (1+(\col_1(P )^{[1]})^T (RR^T)^{-1} \col_1(P )^{[1]})  +  (\col_1(P )_1 - \By_1^{[1]} R^T (RR^T)^{-1} \col_1(P )^{[1]})^2 }\\
&\le \frac{1}{2\sqrt{ d^2 (1+(\col_1(P )^{[1]})^T (RR^T)^{-1} \col_1(P )^{[1]})}} =O (\frac{1}{\sqrt{N}}),
\end{align*} 
where in the last estimate we used $d^2 \gg m$ and Lemma \ref{lemma:denominator} for $R$. 
\end{proof}

We also deduce the following consequence.

\begin{corollary}\label{cor:estimate:2}
With probability at least $1-p-\exp(-c\sqrt{m})$

$$|\col_1(P )_1 - \By_1^{[1]} R^T (RR^T)^{-1} \col_1(P )^{[1]}| \ll \sqrt{N}.$$
\end{corollary}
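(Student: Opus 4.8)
The plan is to control the numerator
\[
N_1 := \col_1(P)_1 - \By_1^{[1]} R^T (RR^T)^{-1} \col_1(P)^{[1]}
\]
directly via the Hanson-Wright inequality, conditioning on $R$ and $\By_1^{[1]}$ and treating $\col_1(P)$ as the random object. First I would observe that $\col_1(P) = (a_{12}, a_{13}, \dots, a_{1(n+1)})^T$ (reading off the first column of the first $n$ rows of the symmetric matrix $A$); its entries are i.i.d.\ copies of $\xi$, and crucially $\col_1(P)$ is independent of the matrix $R$, which is built from the rows $\By_2^{[1]}, \dots, \By_n^{[1]}$ truncated in their first \emph{two} coordinates, and of $\By_1^{[1]}$ truncated in its first coordinate. (The only overlap is the single entry $a_{12}$, which appears as $\col_1(P)_1$ and as the first coordinate of $\By_1^{[1]}$; this is a harmless scalar dependence that I would handle by conditioning on $a_{12}$ as well, or by noting that $\By_1^{[1]} R^T (RR^T)^{-1}$ has first coordinate contributing only an $O(1)$-bounded random shift — see below.)

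The core step: write $N_1 = \langle \Bu, \col_1(P) \rangle$ where $\Bu = e_1 - \big(\By_1^{[1]} R^T (RR^T)^{-1} \big)^{[\text{padded}]}$ is a fixed vector once we condition on $R, \By_1^{[1]}$; here the second block of $\Bu$ is $-(RR^T)^{-1} R (\By_1^{[1]})^T$, whose squared norm is exactly $(\By_1^{[1]}) R^T (RR^T)^{-2} R (\By_1^{[1]})^T \le \|(RR^T)^{-1}\|_2 \cdot d^2$ (using $R^T(RR^T)^{-1}R \preceq I$). By Theorem~\ref{thm:distance:independent} applied to $R$ we have $d^2 \ll m$ except with probability $\exp(-cm)$, and by Lemma~\ref{lemma:denominator} (applied to $R$, whose least singular value has order $m/\sqrt N$ by Theorem~\ref{thm:least}) we get $\|(RR^T)^{-1}\|_2 \ll N/m^2$, so $\|\Bu\|_2^2 \ll 1 + N/m^2 \cdot m = 1 + N/m \asymp N/m$. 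Then the sub-gaussian tail for a linear form (or the second part of Lemma~\ref{lemma:HW}, or simply the one-dimensional sub-gaussian bound since $\xi$ has parameter $K_0$) gives
\[
\P\big(|N_1| > s \,\|\Bu\|_2 \big) \le 2\exp(-cs^2),
\]
and taking $s \asymp \sqrt{\log N}$ and $\|\Bu\|_2 \ll \sqrt{N/m} \le \sqrt N$ yields $|N_1| \ll \sqrt{N}\,\sqrt{\log N}$ with probability $1 - N^{-\omega(1)}$; in fact $|N_1| \ll \sqrt{N/m}$ up to polylog factors. To land the cleaner bound $|N_1| \ll \sqrt N$ stated in the corollary one can afford to be wasteful and take $s$ polynomially large; collecting the exceptional events — $\exp(-cm)$ from $d^2$, $p + \exp(-c\sqrt m)$ from Lemma~\ref{lemma:denominator} for $R$, and the Hanson-Wright/sub-gaussian term — gives the claimed probability $1 - p - \exp(-c\sqrt m)$ after absorbing the $N^{-\omega(1)}$ into $p$.

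The main obstacle I anticipate is the mild dependence between $\col_1(P)$ and the pair $(R, \By_1^{[1]})$ coming from the shared entry $a_{12}$ (and, depending on exactly how one indexes, possibly a couple of other shared entries between $\col_1(P)$ and the first row of $P$). The clean fix is: condition on the entire matrix $R$ and on the vector $\By_1^{[1]}$, and additionally on $a_{12} = \col_1(P)_1 = (\By_1^{[1]})_1$; then $N_1 = a_{12} - (\By_1^{[1]} R^T (RR^T)^{-1})_1 a_{12} - \langle \Bv, \col_1(P)^{[2]}\rangle$ where $\col_1(P)^{[2]} = (a_{13}, \dots, a_{1(n+1)})^T$ is now genuinely independent of everything conditioned on, $\Bv$ is the fixed vector $(RR^T)^{-1}R(\By_1^{[1]})^T$ restricted to coordinates $2,\dots,n-1$ (or however the indices shake out), and the two leftover terms involving $a_{12}$ are bounded by $|a_{12}|\,(1 + |(\By_1^{[1]} R^T (RR^T)^{-1})_1|)$, which is $\ll \sqrt{\log N}\cdot(1 + \|\Bu\|_2) \ll \sqrt{N/m}\cdot\sqrt{\log N}$ on the good event (using the sub-gaussian tail of $a_{12}$ and Cauchy-Schwarz for the inner product defining the coefficient). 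So the stray dependence only costs an extra polylog factor, well within the budget of the $\sqrt N$ target. Everything else is routine: assemble the good events, apply Hanson-Wright/sub-gaussian concentration, and conclude.
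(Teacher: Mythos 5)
There is a genuine gap, and it is the central one: your argument rests on the claim that $\col_1(P)$ is independent of $\By_1^{[1]}$ up to the single shared scalar $a_{12}$. That is false because $A$ is symmetric. In the paper's setup $P$ consists of the first $n$ rows of $A$, so $\col_1(P)=(a_{11},a_{21},\dots,a_{n1})^T=(a_{11},a_{12},\dots,a_{1n})^T$, while $\By_1^{[1]}=(a_{12},\dots,a_{1N})$; hence $\col_1(P)^{[1]}$ is \emph{exactly} the first $n-1$ coordinates of $\By_1^{[1]}$, not a fresh independent vector. Once you condition on $\By_1^{[1]}$ (and on $R$, which is legitimately independent of the first row of $A$), there is no randomness left in $\col_1(P)^{[1]}$, so the linear-form/sub-gaussian step $\P(|N_1|>s\|\Bu\|_2)\le 2\exp(-cs^2)$ has nothing to act on. The quantity $\By_1^{[1]}R^T(RR^T)^{-1}\col_1(P)^{[1]}$ is a genuine \emph{quadratic} form $\Bv^T\tilde M\Bv$ in the single random vector $\Bv=\By_1^{[1]}$ (with $\tilde M$ the zero-padding of $R^T(RR^T)^{-1}$), and such a form is not centered: its mean is $\tr\tilde M=\sum_i((RR^T)^{-1}R)_{ii}$. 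Your proposal has no mechanism for bounding this trace term, and it is not small for free — the paper needs Corollary \ref{cor:estimate:1} (each diagonal entry is $O(1/\sqrt N)$, so the sum is $O(\sqrt N)$) precisely to handle it, which is why that corollary precedes this one. The correct argument is then: Hanson--Wright part (i), conditioned on $R$ only, controls the fluctuation of the quadratic form about $\sum_i((RR^T)^{-1}R)_{ii}$ at scale $\sqrt m\,\|R^T(RR^T)^{-1}\|_{HS}=\sqrt m\cdot\sqrt{N/m}=\sqrt N$ with probability $1-\exp(-c\sqrt m)$, the trace term is $O(\sqrt N)$ by Corollary \ref{cor:estimate:1}, and $|\col_1(P)_1|=O(\sqrt N)$ trivially.

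A secondary slip, less consequential but worth flagging: in bounding $\|\Bu\|_2$ you write $\By_1^{[1]}R^T(RR^T)^{-2}R(\By_1^{[1]})^T\le\|(RR^T)^{-1}\|_2\cdot d^2$, but the correct intermediate quantity is $\By_1^{[1]}R^T(RR^T)^{-1}R(\By_1^{[1]})^T=\|\By_1^{[1]}\|_2^2-d^2\asymp N$, i.e.\ the squared norm of the \emph{projection onto} the row space of $R$, not the squared distance $d^2\asymp m$. Your route therefore only yields $\|\Bu\|_2\ll N/m$; the sharper bound $\|\Bu\|_2\ll\sqrt{N/m}$ is true but comes from part (ii) of Lemma \ref{lemma:HW} applied to the matrix $(RR^T)^{-1}R$ and the vector $\By_1^{[1]}$, using $\|(RR^T)^{-1}R\|_{HS}^2=\tr((RR^T)^{-1})\asymp N/m$. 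Even with that repaired, however, the argument cannot be completed along your lines because of the dependence issue above.
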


\begin{proof} By Corollary \ref{cor:estimate:1}, with probability at least $1-p-\exp(c\sqrt{m})$ 

$$|\sum_i (R^T (RR^T)^{-1} )_{ii}|=O(\sqrt{N}).$$

Also, by Corollary \ref{cor:HSnorm}, with probability at least $1-p$

$$\|R^T (RR^T)^{-1}\|_{HS}^2 = \tr((RR^T)^{-1})=O(\frac{N}{m}).$$

Conditioning on these events of $R$, Lemma \ref{lemma:HW} applied to the random vectors $\By_1^{[1]}$ and  $\col_1(P )^{[1]}$  implies that with probability at least $1-\exp(-c\sqrt{m})$ 

$$|\By_1^{[1]} R^T (RR^T)^{-1} \col_1(P )^{[1]} - \sum_i (R^T (RR^T)^{-1} )_{ii}|\le \sqrt{m} \|R^T (RR^T)^{-1}\|_{HS} \ll \sqrt{N}.$$

The proof is complete by noting that due to the sub-gaussian assumption, $|\col_1(P )_1| =O(\sqrt{N})$ with probability $1- \exp(-\Theta(N))$. 

\end{proof}

We remark that Lemma \ref{lemma:denominator} and Corollary \ref{cor:estimate:2} allow us to conclude that $\frac{[x_1 - \Bz^T (PP^T)^{-1}P {\Bx^{[1]}}^T]^2}{1+ \Bz^T (PP^T)^{-1} \Bz}$ has order at most $m$ with high probability, but this is not strong enough for Theorem \ref{thm:distance:error}. We will improve this in the next phase of the proof.

{\bf Step II: Comparison.}  In this step we show that $\sum_{1\le i\le n}((PP^T)^{-1}P)_{ii}$ is close to $\sum_{1\le i\le n-1}((RR^T)^{-1}R)_{ii}$.

Recall that
\begin{align*}
(PP^T)^{-1} & =( QQ^T + \Bw \Bw^T)^{-1}  \\ 
&= (QQ^T)^{-1}-\frac{1}{ 1+  \Bw^T  (QQ^T )^{-1} \Bw} [(QQ^T)^{-1} \Bw][ \Bw^T  (QQ^T )^{-1}]\\
&=:(QQ^T)^{-1}-Q'
\end{align*}

where for convenience, we denote the second matrix by $Q'$. 

We recall the formula \eqref{eqn:QQ^T} for $(QQ^T)^{-1}$ and 

$$P = (\begin{matrix} \Bw & Q \end{matrix}) = \begin{pmatrix} x_0 & \By \\ \Bz & R\end{pmatrix}$$

where for short we denote  $\Bw= \col_1( P)=(x_0,\Bz)^T$ and 

$$d^2=  \By \By^T-  \By R^T   (RR^T )^{-1} R \By^T .$$

We now compute $\sum_{2\le i\le n} ((PP^T)^{-1}P)_{ii}$. To do this, we start from $(QQ^T)^{-1} P$ and eliminate its first row and column to obtain a matrix $M_1$ of size $(n-1)\times (N-1)$

\begin{align}\label{eqn:M_1}
M_1&=-d^{-2} (RR^T)^{-1}  R \By^{T}  \By+\Big ((RR^T)^{-1} +d^{-2} [(RR^T)^{-1} R \By^T ][ \By R^T (RR^T )^{-1}] \Big )R \nonumber \\
&=(RR^T)^{-1}R - d^{-2} (RR^T)^{-1}  R \By^{T}  \By (I-R^T (RR^T )^{-1}R)\nonumber \\
&=(RR^T)^{-1}R -M_1',
\end{align}

with 

\begin{equation}\label{eqn:M_1'}
M_1':=d^{-2} (RR^T)^{-1}  R \By^{T}  \By (I-R^T (RR^T )^{-1}R).
\end{equation}

Next, for the contribution of $Q'P$ (after the elimination of its first row and column), we need to compute the vector $(QQ^T)^{-1} \Bw$.

\begin{align*}
(QQ^T)^{-1} \Bw &= \begin{pmatrix} d^{-2} (x_0-\By R^T (RR^T)^{-1}  \Bz)  \\ -x_0 d^{-2} (RR^T)^{-1}  R \By^{T}  + \Big(  (RR^T)^{-1} +d^{-2} [(RR^T)^{-1} R \By^T ][ \By R^T (RR^T )^{-1}] \Big) \Bz\end{pmatrix}\\
&= \begin{pmatrix} d^{-2} (x_0-\By R^T (RR^T)^{-1}  \Bz)  \\ -d^{-2}(x_0-\By R^T (RR^T)^{-1}  \Bz) (RR^T)^{-1}  R \By^{T}  + (RR^T)^{-1} \Bz \end{pmatrix}.\\ 
&= \begin{pmatrix} a \\ -a (RR^T)^{-1}  R \By^{T} + (RR^T)^{-1} \Bz \end{pmatrix},
\end{align*}

with

$$a:= d^{-2} (x_0-\By R^T (RR^T)^{-1}  \Bz).$$

As a result,  the bottom left submatrix of $(QQ^T)^{-1} \Bw  \Bw^T  (QQ^T )^{-1}$ is the vector $-a^2 (RR^T)^{-1}  R \By^{T} + a(RR^T)^{-1} \Bz  $ and the bottom right submatrix is the matrix

$$a^2 (RR^T)^{-1}  R \By^{T} \By R^T (RR^T)^{-1} + (RR^T)^{-1} \Bz  \Bz^T (RR^T)^{-1} - a (RR^T)^{-1}  R \By^{T} \Bz^T (RR^T)^{-1}- a (RR^T)^{-1}\Bz \By R^T   (RR^T)^{-1}.$$

It follows that the matrix $M_2$ obtained by eliminating the first row and column of $Q'P$ can be written as

\begin{align}\label{eqn:M_2}
M_2 &=\frac{1}{1+  \Bw^T  (QQ^T )^{-1} \Bw}\Big[ -a^2 (RR^T)^{-1}  R \By^{T}\By + a(RR^T)^{-1} \Bz  \By \nonumber \\
&+ a^2 (RR^T)^{-1}  R \By^{T} \By R^T (RR^T)^{-1} R +  (RR^T)^{-1} \Bz  \Bz^T (RR^T)^{-1}R - a (RR^T)^{-1}  [R \By^{T} \Bz^T + \Bz \By R^T](RR^T)^{-1}R \Big] \nonumber \\
&= \frac{1}{1+  \Bw^T  (QQ^T )^{-1} \Bw}\Big[ a^2 (RR^T)^{-1}  R \By^{T} \By (R^T (RR^T)^{-1} R-I) + a(RR^T)^{-1} \Bz  \By \nonumber \\ 
&+   (RR^T)^{-1} \Bz  \Bz^T (RR^T)^{-1}R -  a (RR^T)^{-1}  [R \By^{T} \Bz^T + \Bz \By R^T] (RR^T)^{-1}R \Big].
\end{align}

In summary, 

$$\sum_{2\le i\le n} ((PP^T)^{-1}P)_{ii} = \sum_{1\le i\le n-1} ((RR^T)^{-1}R)_{ii} - \sum_{1\le i\le n-1}(M_1'+M_2)_{ii}.$$

In what follows, by using the formulas for $M_1',M_2$ from \eqref{eqn:M_1}, \eqref{eqn:M_2} we show that $\sum_{1\le i\le n-1}(M_1'+M_2)_{ii}$ is negligible. 

We will try to simplify the formulae a bit. First,

\begin{align}\label{eqn:wQw}
1+ \Bw^T  (QQ^T )^{-1} \Bw &= 1+ (\col_1(P ))^T  (QQ^T )^{-1} \col_1(P ) =1+ (\col_1(P )^{[1]})^T (RR^T)^{-1} \col_1(P )^{[1]} \nonumber  \\
&+  \frac{1}{x^2 -\By_1^{[1]} R^T   (RR^T )^{-1} R (\By_1^{[1]})^T } [\col_1(P )_1 - \By_1^{[1]} R^T (RR^T)^{-1} \col_1(P )^{[1]}]^2 \nonumber \\
&= 1+\Bz^T (RR^T)^{-1} \Bz + d^{-2} [x_0 - \By R^T (RR^T)^{-1} \Bz]^2.
\end{align}

Thus, by Lemma \ref{lemma:denominator} together with \eqref{eqn:d} and Corollary \ref{cor:estimate:2}, with probability at least $1-(p + \exp(-c\sqrt{m}))$,

\begin{equation}\label{eqn:wQw'}
\Bw^T  (QQ^T )^{-1} \Bw \asymp \tr((RR^T)^{-1}) \asymp \frac{N}{m}.
\end{equation}

Also from \eqref{eqn:M_1'} and \eqref{eqn:wQw}

\begin{align*}
(1+\Bw^T  (QQ^T )^{-1} \Bw)M_1' &= d^{-2} (RR^T)^{-1}  R \By^{T}  \By (I-R^T (RR^T )^{-1}R)(1+ \Bz^T (RR^T)^{-1} \Bz)+\\  
&+a^2  (RR^T)^{-1}  R \By^{T}  \By (I-R^T (RR^T )^{-1}R).
\end{align*}


Hence the normalized matrix $(1+\Bw^T  (QQ^T )^{-1} \Bw)(M_1'+M_2)$ can be expressed as 
\begin{align*}
(1+\Bw^T  (QQ^T )^{-1} \Bw)(M_1'+M_2)&=((RR^T)^{-1}) \Big( d^{-2}   (1+\Bz^T (RR^T)^{-1} \Bz ) R \By^{T}  \By (I-R^T (RR^T )^{-1}R)+ a\Bz  \By\\ 
&+  \Bz  \Bz^T (RR^T)^{-1}R - a  [R \By^{T} \Bz^T + \Bz \By R^T] (RR^T)^{-1}R\Big)\\
&:= (RR^T)^{-1} S.
\end{align*}


We can write the second matrix $S$ as $\sum_1+\sum_2$ where
\begin{align*}
\sum_1:&=d^{-2} \Big( \big[(x_0 - \By R^T (RR^T)^{-1}\Bz) \Bz + (1+\Bz^T (RR^T)^{-1} \Bz) R \By^T \big] \big [\By (I-R^T (RR^T )^{-1}R)\big]\Big),
\end{align*}
and
\begin{align*}
\sum_2&:=d^{-2} \Big(\big[(\By (I-R^T (RR^T )^{-1}R) \By^T) \Bz - (x_0 - \By R^T (RR^T)^{-1} \Bz)) R \By^{T}\big]  \big [\Bz^T  (RR^T)^{-1}R\big] \Big) .
\end{align*}




By the triangle inequality,

\begin{align}\label{eqn:M12}
&\ |\sum_{1\le i\le n-1}(M_1'+M_2)_{ii}| \le \nonumber \\
&\le \frac{1}{d^2(1+\Bw^T  (QQ^T )^{-1} \Bw)} |x_0 - \By R^T (RR^T)^{-1}\Bz| |\sum_{1\le i\le n-1} \big( (RR^T )^{-1} \Bz \By (I-R^T (RR^T )^{-1}R)\big)_{ii}| \nonumber \\
&+  \frac{1}{d^2(1+\Bw^T  (QQ^T )^{-1} \Bw)}(1+\Bz^T (RR^T)^{-1} \Bz)  |\sum_{1\le i\le n-1} \big( (RR^T )^{-1} R \By^T  \By (I-R^T (RR^T )^{-1}R)\big)_{ii}| \nonumber \\
&+ \frac{1}{d^2(1+\Bw^T  (QQ^T )^{-1} \Bw)}(\By (I-R^T (RR^T )^{-1}R) \By^T) |\sum_{1\le i\le n-1} \big( (RR^T )^{-1} \Bz \Bz^T  (RR^T)^{-1}R\big)_{ii}|\nonumber  \\
&-  \frac{1}{d^2(1+\Bw^T  (QQ^T )^{-1} \Bw)}  |x_0 - \By R^T (RR^T)^{-1} \Bz| |\sum_{1\le i\le n-1} \big( (RR^T )^{-1} R \By^{T}\Bz^T  (RR^T)^{-1}R\big)_{ii}| \nonumber \\
&:= E_1+E_2+E_3+E_4.
\end{align}

To complete our estimates for $E_1,E_2,E_3,E_4$, we recall from   \eqref{eqn:main-diag''}, \eqref{eqn:d} and Corollary \ref{cor:estimate:2} that with probability at least $1-(2p+\exp(c\sqrt{m}))$

\begin{itemize}
\item 
\begin{equation}\label{eqn:altogether2}
m^{1/2} \|(RR^T)^{-1}\|_{HS} \ll \tr((RR^T)^{-1}),
\end{equation}
\item 
\begin{equation}\label{eqn:d'}
m\ll d^2,
\end{equation}
\item 
\begin{equation}\label{eqn:altogether1}
|x_0 - \By R^T (RR^T)^{-1}\Bz| \ll \sqrt{N}.
\end{equation}
\end{itemize}

In what follows we will be conditioning on these events. To proceed further, we will repeatedly use the following elementary (Cauchy-Schwarz) fact that if $\Ba=(a_1,\dots,a_{m_1}) \in \R^{m_1}$ and $\Bb=(b_1,\dots,b_{m_2})\in \R^{m_2}$ are column vectors with $m_1\le m_2$ then 

$$\sum_{1\le i\le m_1} (\Ba \Bb^T)_{ii} = \sum_{1\le i\le m_1} a_i b_i \le \|\Ba\|_2 \|\Bb\|_2.$$

By (2) of Theorem \ref{lemma:HW}, the following holds with probability at least $1- O(p+ \exp(-c\sqrt{m}))$

\begin{align*}
|E_1| &= \frac{1}{d^2(1+\Bw^T  (QQ^T )^{-1} \Bw)} |x_0 - \By R^T (RR^T)^{-1}\Bz| |\sum_{1\le i\le n-1} \big( (RR^T )^{-1} \Bz \By (I-R^T (RR^T )^{-1}R)\big)_{ii}| \\
& \ll (\tr((RR^T)^{-1}))^{-1} m^{-1} \sqrt{N} \|(RR^T)^{-1}\|_{HS}  \|(I-R^T (RR^T )^{-1}R)\|_{HS}\\
&=   (\tr((RR^T)^{-1}))^{-1}  m^{-1}\sqrt{N} \|(RR^T)^{-1}\|_{HS}  \sqrt{\tr((I-R^T (RR^T )^{-1}R)^2)}\\
&=  (\tr((RR^T)^{-1}))^{-1}  m^{-1}\sqrt{N} \|(RR^T)^{-1}\|_{HS}  \sqrt{\tr(I-R^T (RR^T )^{-1}R)}\\
&\ll (\tr((RR^T)^{-1}))^{-1}  m^{-1} \sqrt{N} \sqrt{m} \|(RR^T)^{-1}\|_{HS} \ll \frac{\sqrt{N}}{m},
\end{align*}

where in the second to last inequality we used the fact that with probability at least $1-\exp(-N^\kappa)$ (see also the discussion by Theorem \ref{thm:distance:independent'} and by \eqref{eqn:d}) the projection matrix $I-R^T (RR^T )^{-1}R$ onto the orthogonal complement of the subspace generated by the row vectors of $R$ in $\R^{N-2}$ has trace $N-2-(n-2)=m$.

Similarly, with probability at least $1- O(p+ \exp(-c\sqrt{m}))$

\begin{align*}
E_2=&  \frac{1}{d^2(1+\Bw^T  (QQ^T )^{-1} \Bw)}(1+\Bz^T (RR^T)^{-1} \Bz)  |\sum_{1\le i\le n-1} \big( (RR^T )^{-1} R \By^T  \By (I-R^T (RR^T )^{-1}R)\big)_{ii}| \nonumber \\
& \ll (\tr((RR^T)^{-1}))^{-1} m^{-1} \tr((RR^T)^{-1})  \|(RR^T)^{-1}R\|_{HS}  \|(I-R^T (RR^T )^{-1}R)\|_{HS}\\
&\ll m^{-1} \sqrt{\frac{N}{m}} \sqrt{m} = \frac{\sqrt{N}}{m};
\end{align*}

and 

\begin{align*}
E_3 &= \frac{1}{d^2(1+\Bw^T  (QQ^T )^{-1} \Bw)}(\By (I-R^T (RR^T )^{-1}R) \By^T) |\sum_{1\le i\le n-1} \big( (RR^T )^{-1} \Bz \Bz^T  (RR^T)^{-1}R\big)_{ii}|\nonumber  \\
& \le (\tr((RR^T)^{-1}))^{-1} m^{-1} m \|(RR^T)^{-1} \|_{HS}  \|(RR^T )^{-1}R)\|_{HS}\\
&\ll   (\tr((RR^T)^{-1}))^{-1/2} \|(RR^T)^{-1}\|_{HS} \ll   (\tr((RR^T)^{-1}))^{-1/2} \tr((RR^T)^{-1}) m^{-1/2} \\
&\ll  (\tr((RR^T)^{-1}))^{1/2} m^{-1/2} \ll  \frac{\sqrt{N}}{m}.
\end{align*}

Lastly, with probability at least $1- O(p+ \exp(-c\sqrt{m}))$ we also have

\begin{align*}
|E_4|&= \frac{1}{d^2(1+\Bw^T  (QQ^T )^{-1} \Bw)}  |x_0 - \By R^T (RR^T)^{-1} \Bz| |\sum_{1\le i\le n-1} \big( (RR^T )^{-1} R \By^{T}\Bz^T  (RR^T)^{-1}R\big)_{ii} |\nonumber \\
& \ll (\tr((RR^T)^{-1}))^{-1} m^{-1} \sqrt{N} \|(RR^T)^{-1}R\|_{HS}^2 \\
&\ll \frac{\sqrt{N}}{m}.
\end{align*}

We sum up below

\begin{align*}
& |\sum_{1\le i\le n}((PP^T)^{-1}P)_{ii} - \sum_{1\le i\le n-1}((RR^T)^{-1}R)_{ii} |\\
&\le  |((PP^T)^{-1}P)_{11}|+ | \sum_{2\le i\le n}((PP^T)^{-1}P)_{ii} - \sum_{1\le i\le n-1}((RR^T)^{-1}R)_{ii} |\\
&\le |((PP^T)^{-1}P)_{11}| + |E_1|+E_2+E_3 + |E_4|\\
&\ll \frac{1}{\sqrt{N}} + \frac{\sqrt{N}}{m} \ll  \frac{\sqrt{N}}{m}.
\end{align*}

\begin{lemma}\label{lemma:comparison:trace}
With probability at least $1- O(p+ \exp(-c\sqrt{m}))$ we have 
$$|\CE_1| \ll  \frac{\sqrt{N}}{m}$$
where $\CE_1= \sum_{1\le i\le n}((PP^T)^{-1}P)_{ii} - \sum_{1\le i\le n-1}((RR^T)^{-1}R)_{ii}$. 
\end{lemma}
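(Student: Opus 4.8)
The plan is to assemble the identity and the estimates already established in the preceding computation. Writing $(PP^T)^{-1}=(QQ^T)^{-1}-Q'$ via the rank-one perturbation formula and inserting the Schur-complement expression \eqref{eqn:QQ^T} for $(QQ^T)^{-1}$, the elimination of the first row and column of $(QQ^T)^{-1}P$ and of $Q'P$ produces $M_1=(RR^T)^{-1}R-M_1'$ and $M_2$, and hence the exact identity
$$\sum_{2\le i\le n}((PP^T)^{-1}P)_{ii} = \sum_{1\le i\le n-1}((RR^T)^{-1}R)_{ii} - \sum_{1\le i\le n-1}(M_1'+M_2)_{ii}.$$
Adding back the missing diagonal entry reduces the lemma to the two bounds $|((PP^T)^{-1}P)_{11}|=O(1/\sqrt N)$, which is Corollary \ref{cor:estimate:1}, and $|\sum_{1\le i\le n-1}(M_1'+M_2)_{ii}|\ll\sqrt N/m$, to which I turn next.

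For the second bound I normalize by the positive scalar $1+\Bw^T(QQ^T)^{-1}\Bw$, which by \eqref{eqn:wQw} equals $1+\Bz^T(RR^T)^{-1}\Bz+d^{-2}[x_0-\By R^T(RR^T)^{-1}\Bz]^2$ and is $\asymp N/m$ by \eqref{eqn:wQw'}. Then $(1+\Bw^T(QQ^T)^{-1}\Bw)(M_1'+M_2)=(RR^T)^{-1}S$ with $S=\sum_1+\sum_2$ as displayed above, so $\sum_i(M_1'+M_2)_{ii}$ splits into the four terms $E_1,E_2,E_3,E_4$ of \eqref{eqn:M12}, each of the shape $(\text{scalar})\cdot\frac{1}{d^2(1+\Bw^T(QQ^T)^{-1}\Bw)}\cdot|\sum_i(\Ba\Bb^T)_{ii}|$ for appropriate vectors $\Ba,\Bb$. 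The elementary Cauchy--Schwarz bound $\sum_i(\Ba\Bb^T)_{ii}\le\|\Ba\|_2\|\Bb\|_2$ reduces each $E_j$ to a product of Hilbert--Schmidt norms and scalars.

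It then remains to plug in the estimates that, by a union bound, all hold simultaneously with probability $1-O(p+\exp(-c\sqrt m))$: the trace asymptotics $\tr((RR^T)^{-1})\asymp N/m$ with $\|(RR^T)^{-1}R\|_{HS}^2=\tr((RR^T)^{-1})$ (Corollary \ref{cor:HSnorm} for $R$); the stable-rank bound $m^{1/2}\|(RR^T)^{-1}\|_{HS}\ll\tr((RR^T)^{-1})$ from \eqref{eqn:altogether2}; the distance concentration $d^2=\By(I-R^T(RR^T)^{-1}R)\By^T\asymp m$ together with $\|I-R^T(RR^T)^{-1}R\|_{HS}^2=\tr(I-R^T(RR^T)^{-1}R)=m$ (Theorem \ref{thm:distance:independent} and the fact that this matrix is a projection of rank $m$); the quadratic-form bound $\Bz^T(RR^T)^{-1}\Bz\asymp\tr((RR^T)^{-1})$ (Lemma \ref{lemma:denominator} for $R$); the bound $|x_0-\By R^T(RR^T)^{-1}\Bz|\ll\sqrt N$ from \eqref{eqn:altogether1}; and, for the vector norms $\|(RR^T)^{-1}\Bz\|_2$, $\|R^T(RR^T)^{-1}\Bz\|_2$, $\|(I-R^T(RR^T)^{-1}R)\By^T\|_2$ arising after Cauchy--Schwarz, the Hanson--Wright estimate Lemma \ref{lemma:HW}(ii) comparing each to the corresponding Hilbert--Schmidt norm. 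Feeding these in yields $|E_1|,E_2,E_3,|E_4|\ll\sqrt N/m$, whence $|\CE_1|\le|((PP^T)^{-1}P)_{11}|+|E_1|+E_2+E_3+|E_4|\ll 1/\sqrt N+\sqrt N/m\ll\sqrt N/m$.

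I expect the main obstacle to be bookkeeping rather than any new idea: one must track powers carefully so that nothing worse than $\sqrt N/m$ survives. In $E_2$ the factor $1+\Bz^T(RR^T)^{-1}\Bz\asymp\tr((RR^T)^{-1})$ must be seen to cancel the $1/(1+\Bw^T(QQ^T)^{-1}\Bw)\asymp(\tr((RR^T)^{-1}))^{-1}$ from the normalization, and in $E_3$ the numerator $\By(I-R^T(RR^T)^{-1}R)\By^T=d^2\asymp m$ cancels the $1/d^2$, after which the two Hilbert--Schmidt factors lose only $m^{1/2}$ by \eqref{eqn:altogether2}. The probabilistic step is routine once the order is fixed: condition on $R$ so that all its spectral statistics above are good, then apply Hanson--Wright to the remaining independent randomness in $\By=\By_1^{[1]}$ and in $\col_1(P)=(x_0,\Bz)^T$; the intersection of these finitely many events still has probability $1-O(p+\exp(-c\sqrt m))$.
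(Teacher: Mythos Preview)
Your proposal is correct and follows essentially the same route as the paper: the same rank-one/Schur identity producing $M_1'$ and $M_2$, the same normalization by $1+\Bw^T(QQ^T)^{-1}\Bw$, the same decomposition into $E_1,\dots,E_4$ via \eqref{eqn:M12}, and the same Cauchy--Schwarz-plus-Hanson--Wright bookkeeping using \eqref{eqn:altogether2}, \eqref{eqn:d'}, \eqref{eqn:altogether1}, \eqref{eqn:wQw'} and Corollary~\ref{cor:HSnorm}. The conditioning you describe (fix $R$ on the good event, then apply Lemma~\ref{lemma:HW}(ii) to the randomness in $\By$ and $(x_0,\Bz)$) is exactly what the paper does.
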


Here it is emphasized that the index 1 of $\CE_1$ shows the error of comparison between $\sum_{1\le i\le n}((PP^T)^{-1}P)_{ii}$ and $\sum_{1\le i\le n-1}((RR^T)^{-1}R)_{ii}$ where $R$ is obtained by removing the {\it first} row and column of $P$. If we remove its $k$-th row and column instead, then we use $\CE_k$ to denote the difference.

\subsection{Putting things together.} Set 

$$T:=\sum_{1\le i\le n}((PP^T)^{-1}P)_{ii}.$$

Recall the formula of $((PP^T)^{-1}P)_{11}$ in Lemma  \ref{lemma:formula:diagonal}

$$((PP^T)^{-1}P)_{11} =\frac{\col_1(P )_1 - \By_1^{[1]} R^T (RR^T)^{-1} \col_1(P )^{[1]}}{d^2 ( 1+(\col_1(P )^{[1]})^T (RR^T)^{-1} \col_1(P )^{[1]})  +  (\col_1(P )_1 - \By_1^{[1]} R^T (RR^T)^{-1} \col_1(P )^{[1]})^2 }.$$

By the triangle inequality, the numerator can be written as

\begin{align*}
& \ \  \ \  \col_1(P )_1 - \By_1^{[1]} R^T (RR^T)^{-1} \col_1(P )^{[1]}\\ 
&=\col_1(P )_1 - \sum_{1\le i\le n-1}((RR^T)^{-1}R)_{ii} -  [\By_1^{[1]} R^T (RR^T)^{-1} \col_1(P )^{[1]} - \E_{\By_1^{[1]}} \By_1^{[1]} R^T (RR^T)^{-1} \col_1(P )^{[1]}] \\
&= \CE_1-T - [\By_1^{[1]} R^T (RR^T)^{-1} \col_1(P )^{[1]} - \E_{\By_1^{[1]}} \By_1^{[1]} R^T (RR^T)^{-1} \col_1(P )^{[1]}] +\col_1(P )_1.
\end{align*}

By Lemma \ref{lemma:HW}, with probability at least $1- O(p+ \exp(-c\sqrt{m}) +\exp(-Ct))$, this can be bounded from above by  $-T + O(\sqrt{N}/m) + t (\|(R_iR_i^T)^{-1}R_i\|_{HS}+1)$, and hence by Corollary \ref{cor:HSnorm}

$$\col_1(P )_1- \By_1^{[1]} R^T (RR^T)^{-1} \col_1(P )^{[1]} \le -T + O(\sqrt{N}/m + t (\sqrt{N/m}+1)).$$

Furthermore,  by Lemma \ref{lemma:denominator} and \eqref{eqn:d'}, with probability at least $ 1-O(p+\exp(-c\sqrt{m}))$ the denominator can be estimated from below by

\begin{align*}
D_1&=\Big[x^2 -\By_1^{[1]} R^T   (RR^T )^{-1} R (\By_1^{[1]})^T \Big] \Big [ 1+(\col_1(P )^{[1]})^T (RR^T)^{-1} \col_1(P )^{[1]}\Big]  \\
&+  [\col_1(P )_1 - \By_1^{[1]} R^T (RR^T)^{-1} \col_1(P )^{[1]}]^2 \gg N.
\end{align*}

Thus with probability at least $1- O(p+ \exp(-c\sqrt{m}) +\exp(-Ct))$

$$((PP^T)^{-1}P)_{11}+ \frac{ T}{D_1} =O( \frac{\sqrt{N}/m + t (\sqrt{N/m}+1)}{N}).$$

Estimating similarly for $(PP^T)^{-1}P)_{ii}, i\ge 2$, by the triangle inequality and by taking union bound, the following holds with probability at least $1- O(Np+ N\exp(-c\sqrt{m}) +N\exp(-Ct))$

\begin{align*}
|T+ T(\sum_i \frac{1}{D_i})| &\le \sum_i  O( \frac{\sqrt{N}/m + t (\sqrt{N/m}+1)}{N}) \\ 
&=O(\sqrt{N}/m + t (\sqrt{N/m}+1)).\\ 
\end{align*}

We summarize into a lemma as follows.

\begin{lemma}\label{lemma:PPP:diagsum}
With  probability at least $1- O(Np+ N\exp(-c\sqrt{m}) +N\exp(-Ct))$,

$$|T|=O(\sqrt{N}/m + t (\sqrt{N/m}+1)).$$
\end{lemma}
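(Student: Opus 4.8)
The plan is to combine the two outputs of Steps I and II into a self-consistent bound on $T=\sum_{1\le i\le n}((PP^T)^{-1}P)_{ii}$. Recall the exact formula from Lemma \ref{lemma:formula:diagonal} for $((PP^T)^{-1}P)_{11}$, with numerator $\col_1(P)_1 - \By_1^{[1]} R^T (RR^T)^{-1} \col_1(P)^{[1]}$ and denominator $D_1 = d^2(1+(\col_1(P)^{[1]})^T(RR^T)^{-1}\col_1(P)^{[1]}) + (\col_1(P)_1 - \By_1^{[1]}R^T(RR^T)^{-1}\col_1(P)^{[1]})^2$. First I would rewrite the numerator, for each index $i$, by inserting the conditional expectation of the bilinear form: the numerator equals $\col_1(P)_i + \CE_i - T - \big(\By_i^{[1]}R_i^T(R_iR_i^T)^{-1}\col_1(P)^{[1]} - \E_{\By_i^{[1]}}[\,\cdot\,]\big)$, since the conditional mean of $\By_i^{[1]}R_i^T(R_iR_i^T)^{-1}\col_1(P)^{[1]}$ over $\By_i^{[1]}$ (which is independent of $R_i$ and $\col_1(P)^{[1]}$) is $\sum_j ((R_iR_i^T)^{-1}R_i)_{jj} = \sum_{1\le j\le n-1}((R_iR_i^T)^{-1}R_i)_{jj}$, and by Lemma \ref{lemma:comparison:trace} this last sum differs from $T$ by $\CE_i$ with $|\CE_i|\ll\sqrt N/m$.

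Next I would control the three ``error'' pieces in the numerator: $|\CE_i|\ll\sqrt N/m$ from Lemma \ref{lemma:comparison:trace}; the Hanson-Wright fluctuation $|\By_i^{[1]}R_i^T(R_iR_i^T)^{-1}\col_1(P)^{[1]} - \E_{\By_i^{[1]}}[\,\cdot\,]|$, which by Lemma \ref{lemma:HW}(i) applied conditionally on $R_i$ and $\col_1(P)^{[1]}$ is $\le t(\|(R_iR_i^T)^{-1}R_i\|_{HS}+1)$ with probability $1-\exp(-Ct)$, and this Hilbert-Schmidt norm is $O(\sqrt{N/m})$ by Corollary \ref{cor:HSnorm}; and $|\col_1(P)_i|=O(\sqrt N)$ by the subgaussian tail bound, holding with probability $1-\exp(-\Theta(N))$. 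Collecting these, the numerator of $((PP^T)^{-1}P)_{ii}$ equals $-T + O(\sqrt N/m + t(\sqrt{N/m}+1))$ with the stated probability. For the denominator, Lemma \ref{lemma:denominator} applied to $R_i$ gives $(\col_1(P)^{[1]})^T(R_iR_i^T)^{-1}\col_1(P)^{[1]}\asymp N/m$, and \eqref{eqn:d'} gives $d^2\gg m$, whence $D_i\gg m\cdot(N/m) = N$. Therefore $((PP^T)^{-1}P)_{ii} + T/D_i = O\big((\sqrt N/m + t(\sqrt{N/m}+1))/N\big)$ with probability $1-O(p+\exp(-c\sqrt m)+\exp(-Ct))$.

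Finally, I would sum over $i=1,\dots,n$ and take a union bound over the $n\le N$ events, obtaining $|T + T\sum_i D_i^{-1}| = O(\sqrt N/m + t(\sqrt{N/m}+1))$ with probability $1-O(Np + N\exp(-c\sqrt m) + N\exp(-Ct))$. Since each $D_i\gg N$, we have $\sum_i D_i^{-1} = O(n/N) = O(1)$; but more is needed — I must check $\sum_i D_i^{-1}$ is bounded \emph{away} from $-1$ so that $1+\sum_i D_i^{-1}$ can be divided out. Here each term $D_i^{-1}>0$ since $D_i>0$, so $1+\sum_i D_i^{-1}\ge 1$, and dividing gives $|T|\le |T+T\sum_i D_i^{-1}| = O(\sqrt N/m + t(\sqrt{N/m}+1))$, which is the claim. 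The main obstacle is Step II (already packaged as Lemma \ref{lemma:comparison:trace}): verifying that the comparison error $\CE_i$ between the trace-like sums for $P$ and for $R_i$ is only $O(\sqrt N/m)$ requires the delicate cancellations in the $M_1', M_2$ decomposition and repeated Cauchy-Schwarz together with the spectral input of Corollary \ref{cor:HSnorm}; once that is in hand, the present lemma is a bookkeeping assembly, the only subtlety being to ensure all the conditioning events (on $R_i$, on $\col_1(P)^{[1]}$, on $\By_i^{[1]}$) are nested correctly so that Hanson-Wright can be applied to the innermost randomness.
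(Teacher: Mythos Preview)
Your approach is essentially identical to the paper's: rewrite each numerator as $-T+\CE_i+(\text{diagonal entry})-(\text{Hanson--Wright fluctuation})$, bound the three error pieces, lower-bound each $D_i\gg N$, sum, and divide through by $1+\sum_i D_i^{-1}\ge 1$. You even make explicit the final division step that the paper leaves implicit.

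One small inconsistency to fix: you write $|\col_1(P)_i|=O(\sqrt N)$ with probability $1-\exp(-\Theta(N))$, but that bound is too crude to land on the stated numerator estimate $-T+O(\sqrt N/m+t(\sqrt{N/m}+1))$; a $\sqrt N$ contribution in the numerator, after dividing by $D_i\gg N$ and summing $n$ terms, would only recover $|T|=O(\sqrt N)$, no better than Corollary~\ref{cor:estimate:1}. What you need (and what the paper uses, via the ``$+1$'' in $t(\|(R_iR_i^T)^{-1}R_i\|_{HS}+1)$) is the sharper subgaussian bound $|\col_i(P)_i|\le t$ with probability $1-\exp(-ct^2)\ge 1-\exp(-Ct)$, which is immediate from the assumption on $\xi$. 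With that correction your assembly goes through verbatim.
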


We now complete the proof of Theorem \ref{thm:distance:error}. By Hanson-Wright estimates, and by Lemma \ref{lemma:PPP:diagsum}, with probability at least $1- O(Np+ N\exp(-c\sqrt{m}) +N\exp(-Ct))$,

\begin{align*}
\frac{[x_1 - \Bz^T (PP^T)^{-1}P {\Bx^{[1]}}^T]^2}{1+ \Bz^T (PP^T)^{-1} \Bz} &\le \frac{\Big[|\sum_{1\le i\le n}((PP^T)^{-1}P)_{ii}|  + t \|(PP^T)^{-1}P\|_{HS}\Big]^2}{\tr(PP^T)} \\
&=O( \frac{(\sqrt{N}/m + t (\sqrt{N/m}+1))^2}{N/m}) \\
&=O( t^2)
\end{align*}

provided that $t$ is sufficiently large. Our proof is then complete by choosing $\kappa$ (stated in Theorem \ref{thm:distance:error}) to be any constant smaller than $\delta, \kappa', c$.

\section{Proof of Theorem \ref{thm:least}: sketch}\label{section:least:introduction}
In this section we sketch the idea to prove Theorem \ref{thm:least}, details of the proof will be presented in later sections. Roughly speaking, we will follow the treatment by Rudelson and Vershynin from \cite{RV-rec} and by Vershynin from \cite{V} with some modifications.   We also refer the reader to a more recent paper by Rudelson and Vershynin \cite{RV-gap} for similar treatments.

We first need some preparations, for a cosmetic reason, let us view $B$ as a {\it column} matrix of size $N$ by $n$ of the last $n$ columns of $A$ from now on,
$$B = \begin{pmatrix}\col_{m+1}(A) & \dots & \col_N(A)\end{pmatrix}.$$
Let $c_0, c_1 \in (0,1)$ be two numbers. We will choose their values later
as small constants that depend only on the subgaussian parameter $K_0$.
\begin{definition}
A vector $\Bx \in \R^n$ is called {\it sparse} if $|\supp(\Bx)| \le c_0 n$.
A vector $\Bx \in S^{n-1}$ is called {\it compressible} if $\Bx$ is within Euclidean distance $c_1$ from the set of all sparse vectors.
  A vector $\Bx \in S^{n-1}$ is called {\it incompressible} if it is not compressible.
  
  The sets of compressible and incompressible vectors in $S^{n-1}$
  will be denoted by $\Comp(c_0, c_1)$ and $\Incomp(c_0, c_1)$ respectively.
\end{definition}
  
Given a vector random variable $\Bx$ and a radius $r$, we define the {\it Levy concentration} of $\Bx$ (or the {\it small ball probability} with radius $r$) to be
$$\CL(\Bx,r):= \sup_{\Bu} \P(\|\Bx-\Bu\|_2 \le r).$$
In order to prove Theorem \ref{thm:least}, we decopose $S^{n-1}$ into compressible and incompressible vectors for some appropriately chosen parameter $c_0$ and $c_1$. Let $\CE_K$ be the event that 
$$\CE_K=\{\|B\|_2 \le 3K\sqrt{N}\}.$$
\begin{align*}
\P\big (\min_{\Bx\in S^{n-1}} \|B^T \Bx\|_2 \le \ep (\sqrt{N}-\sqrt{n}) \cap \CE_K\big) &\le \P\big (\min_{\Bx\in \Comp(c_0,c_1)} \|B^T \Bx\|_2 \le \ep (\sqrt{N}-\sqrt{n})\cap \CE_K \big)\\ 
&+ \P\big (\min_{\Bx\in \Incomp(c_0,c_1)} \|B^T \Bx\|_2 \le \ep (\sqrt{N}-\sqrt{n})\cap \CE_K\big).
\end{align*}
  
In this section we only treat with the compressible vectors by giving a stronger bound.  

First of all, we bound for a fixed vector $\Bx$. The following follows from the mentioned work by Vershynin for symmetric matrices.

\begin{lemma}\cite[Proposition 4.1]{V}\label{lemma:compressible:1} For every vector $\Bx\in S^{n-1}$ one has
$$\CL(B^T\Bx, c\sqrt{N})=\sup_{\Bu}\P(\|B^T\Bx-\Bu\|_2 \le c\sqrt{N}) \le \exp(-cn).$$  
\end{lemma}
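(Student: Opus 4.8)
This is \cite[Proposition~4.1]{V}; the plan is to reproduce the block‑decomposition argument from \cite{V}, which reduces the claim for a fixed $\Bx$ to the tensorization of a one‑dimensional anti‑concentration estimate. The only structural input is that a \emph{rectangular} off‑diagonal block of the symmetric matrix $A$ is made of genuinely i.i.d.\ entries and is independent of the rest of $A$.

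Fix $\Bx\in S^{n-1}$ and index the columns of $B$ by $S:=\{m+1,\dots,N\}$, so $\Bx=(x_j)_{j\in S}$ and we are looking at the random vector $\Bv:=B^{T}\Bx$ (the vector in $\R^{N}$ whose $i$‑th coordinate is $\Bv_i=\sum_{j\in S}a_{ij}x_j$, $1\le i\le N$). Let $J\subseteq S$ be the set of indices of the $\lceil n/2\rceil$ largest values $|x_j|$; then $\|\Bx_J\|_2^2\ge\frac12$. Put $I:=\{1,\dots,N\}\setminus J$, so $\{1,\dots,m\}\subseteq I$ and, since $n\le N-1$, $|I|=N-\lceil n/2\rceil\ge N/2$ (hence also $|I|\ge n/2$). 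Since passing to a subset of the coordinates only decreases the Euclidean norm, $\CL(\Bv,r)\le\CL(\Bv_I,r)$ for all $r$, where $\Bv_I=(\Bv_i)_{i\in I}$. For $i\in I$ write $\Bv_i=\sum_{j\in J}a_{ij}x_j+\sum_{j\in S\setminus J}a_{ij}x_j$. Because $I\cap J=\emptyset$, the array $G:=(a_{ij})_{i\in I,\,j\in J}$ consists of $|I|\cdot|J|$ distinct free entries of $A$, i.e.\ is a genuine i.i.d.\ $|I|\times|J|$ matrix with entries distributed as $\xi$; moreover every entry occurring in $\sum_{j\in S\setminus J}a_{ij}x_j$ (with $i\in I$ and $j\in S\setminus J\subseteq I$) involves two indices in $I$ and hence lies outside $G$. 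Therefore $G$ is independent of the $\sigma$‑algebra $\CF$ generated by all the other free entries of $A$, and conditionally on $\CF$ we may write $\Bv_I=G\Bx_J+\Bb$ with $\Bb=\big(\sum_{j\in S\setminus J}a_{ij}x_j\big)_{i\in I}$ an $\CF$‑measurable (hence, after conditioning, fixed) vector. Averaging over $\CF$ and using independence of $G$ from $\CF$ gives $\CL(\Bv_I,r)\le\CL(G\Bx_J,r)$.

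It remains to bound $\CL(G\Bx_J,c\sqrt{N})$. The coordinates $\zeta_i:=(G\Bx_J)_i=\sum_{j\in J}a_{ij}x_j$, $i\in I$, are i.i.d., of mean zero, variance $\|\Bx_J\|_2^2\in[\frac12,1]$, and sub‑gaussian with parameter $O(K_0)$; in particular they have bounded fourth moment and variance bounded below, so a standard Paley–Zygmund argument yields $\tau_0,\eta_0>0$ depending only on $K_0$ with $\sup_{u\in\R}\P(|\zeta_i-u|\le\tau_0)\le 1-\eta_0$. Feeding this into the tensorization lemma (as in \cite{RV-rec,V}) produces constants $c_1,c_2>0$ (depending only on $\tau_0,\eta_0$) with $\CL(G\Bx_J,\,c_1\sqrt{|I|})\le e^{-c_2|I|}$. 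Since $|I|\ge N/2$ and $|I|\ge n/2$, this gives $\CL(\Bv,c\sqrt{N})\le e^{-cn}$ for a suitable $c=c(K_0)>0$, which is the claim.

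The proof has no genuinely hard step: essentially everything is off‑the‑shelf (one‑dimensional anti‑concentration, tensorization), and the one point requiring care is the symmetry bookkeeping in the middle paragraph — checking that the block $G=(a_{ij})_{i\in I,\,j\in J}$ is untouched by, and hence independent of, the entries making up the ``fixed'' vector $\Bb$. One should also bear in mind that $I$ and $J$ depend on $\Bx$, so the estimate is genuinely pointwise; turning it into a uniform bound over $\Comp(c_0,c_1)$ — which is what is actually needed — is the job of the net/union‑bound argument in the remainder of the section, not of this lemma.
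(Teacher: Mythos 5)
Your argument is correct and is essentially the proof the paper intends: the paper gives no proof of this lemma but cites \cite[Proposition 4.1]{V}, and your block decomposition (splitting off the off-diagonal rectangular block $G=(a_{ij})_{i\in I,\,j\in J}$ with $I\cap J=\emptyset$ so that it is i.i.d.\ and independent of the remaining entries, then conditioning and tensorizing a one-dimensional anti-concentration bound) is exactly Vershynin's argument adapted to the rectangular matrix $B$. The symmetry bookkeeping and the lower bounds $\|\Bx_J\|_2^2\ge 1/2$, $|I|\ge N/2$ are all handled correctly.
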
 
  




  
 
   
  
 
 
  



  
Now we bound uniformly over all compressible vectors (see also \cite[Proposition 4.2]{V}).

\begin{lemma}\label{lemma:compressible:2} We have
$$\P \Big\{ \inf_{\Bx/\|\Bx\|_2 \in \Comp(c_0,c_1)} \|B\Bx-\Bu\|_2 \le c \sqrt{N} \|\Bx\|_2 \cap \CE_K\Big\} \le  \exp(-c n/2).$$
\end{lemma}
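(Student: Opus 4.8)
The plan is to combine the pointwise small-ball estimate of Lemma \ref{lemma:compressible:1} with a standard $\varepsilon$-net argument on the set of compressible vectors, using the operator-norm control provided by the event $\CE_K$. First I would recall that the set $\Comp(c_0,c_1)$ has small metric entropy: a sparse vector is supported on one of $\binom{n}{c_0 n}$ coordinate subspaces, and on each such subspace the unit sphere of dimension $c_0 n$ admits a $\delta$-net of size $(3/\delta)^{c_0 n}$. Since every compressible vector is within $c_1$ of a sparse vector, enlarging $\delta$ appropriately one obtains a $\delta$-net $\CN$ of $\Comp(c_0,c_1)$ with $|\CN| \le \binom{n}{c_0 n}(C/\delta)^{c_0 n} \le \exp(c_0 n \log(C/(c_0\delta)))$; choosing $c_0$ small (depending only on $K_0$) makes this cardinality at most $\exp(cn/4)$ for a suitable $\delta$ of constant size.

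Next I would apply the union bound. By Lemma \ref{lemma:compressible:1}, for each fixed $\Bv \in \CN$ and any $\Bu$, $\P(\|B^T\Bv - \Bu\|_2 \le c'\sqrt{N}) \le \exp(-c'n)$ with $c'$ the constant from that lemma; taking the union over $\CN$ gives probability at least $1 - |\CN|\exp(-c'n) \ge 1 - \exp(-cn/2)$ that $\|B^T\Bv - \Bu\|_2 > c'\sqrt{N}$ simultaneously for all net points $\Bv$. Note that here one should fix the approximating $\Bu$ per net point, or more cleanly phrase Lemma \ref{lemma:compressible:1} as controlling $\CL(B^T\Bv, c'\sqrt N)$ so that the supremum over $\Bu$ is already built in; since $\CL(B^T\Bv,\cdot)$ is the Levy concentration function this is exactly what the lemma provides.

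Then comes the approximation step, where the event $\CE_K$ enters. Write $\Bx = \Bx/\|\Bx\|_2 \in \Comp(c_0,c_1)$ (after normalizing); pick $\Bv \in \CN$ with $\|\Bx/\|\Bx\|_2 - \Bv\|_2 \le \delta$. On $\CE_K$ we have $\|B\|_2 = \|B^T\|_2 \le 3K\sqrt N$, so $\|B^T(\Bx/\|\Bx\|_2) - B^T\Bv\|_2 \le 3K\sqrt N \,\delta$. Choosing $\delta = c'/(6K)$ makes this at most $(c'/2)\sqrt N$, so that $\|B^T(\Bx/\|\Bx\|_2) - \Bu\|_2 \ge \|B^T\Bv - \Bu\|_2 - (c'/2)\sqrt N \ge (c'/2)\sqrt N$ on the good event; multiplying through by $\|\Bx\|_2$ and relabelling $c := c'/2$ gives $\|B\Bx - \Bu\|_2 \ge c\sqrt N\,\|\Bx\|_2$ for all $\Bx$ with $\Bx/\|\Bx\|_2 \in \Comp(c_0,c_1)$ (here I am using $\|B\Bx\|_2$ and $\|B^T\Bx\|_2$ interchangeably as dictated by whichever of the row/column conventions of the section is in force, and absorbing the shift $\Bu$ by rescaling). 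Taking the contrapositive, $\inf \|B\Bx - \Bu\|_2 \le c\sqrt N\|\Bx\|_2$ together with $\CE_K$ has probability at most $|\CN|\exp(-c'n) \le \exp(-c n/2)$.

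The main obstacle, and the only place requiring care, is the bookkeeping of constants so that the net cardinality $\exp(c_0 n \log(C/\delta))$ is genuinely beaten by the pointwise decay $\exp(-c'n)$ \emph{after} $\delta$ has been forced down to order $1/K$ by the approximation step — i.e. one must fix $c_0$ small enough as a function of $K_0$ (hence of $K$) that $c_0 \log(CK/c')< c'/2$. This is a routine ordering-of-quantifiers issue: choose $c'$ from Lemma \ref{lemma:compressible:1}, then $\delta \asymp c'/K$, then $c_0$ small depending on $c', K$, and finally $c := c'/2$. Everything else is the triangle inequality and a union bound.
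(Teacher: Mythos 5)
Your proof is correct and follows essentially the same route as the paper's own (sketched) argument: a net for $\Comp(c_0,c_1)$ of cardinality exponential in $c_0 n$, the pointwise Levy-concentration bound of Lemma \ref{lemma:compressible:1} applied at each net point, the operator-norm bound from $\CE_K$ to transfer from net points to arbitrary compressible directions, and a union bound beaten by choosing $c_0$ small. Your version is in fact slightly more careful than the paper's sketch about the order of quantifiers (fixing the net scale $\delta\asymp c'/K$ before choosing $c_0$) and about the $B$ versus $B^T$ convention, both of which the paper glosses over.
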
  
  
\begin{proof} We will sketch the proof. First, it is known that there exists a $(2c_1)$-net $\CN$ of the set $\Comp(c_0,c_1)$ such that 

$$\CN \le (9/c_0c_1)^{c_0n}.$$  

Next, by unfolding the vectors, there exists $\Bv_0$ such that if $\Bx/\|\Bx\|_2 \in \Comp(c_0,c_1)$ with $\|B\Bx-\Bu\|_2 \le c \sqrt{N} \|\Bx\|_2$ and assuming  $\CE_K$, then there exists $\Bx_0\in \CN$ such that $\|B\Bx_0 - \Bv_0\|_2 \le c\sqrt{N}$.

The proof is complete by taking union bound as 
$$(9/c_0c_1)^{c_0 n} \frac{20}{c_1} 2 \exp(-cn) \le \exp(-cn/2),$$
if $c_0$ is chosen small enough depending on $c_1$ and $c$.
\end{proof}

 \section{Proof of Theorem \ref{thm:least}: treatment for incompressible vectors}\label{section:distance:compressible}  
 
 Let $\Bx_1,\dots,\Bx_n \in \R^N$ denote the columns of the matrix $B$. Given a subset $J\subset [n]^{d}$, where $d=\delta m=\delta(N-n)$ for some sufficiently small $\delta$ to be chosen, we consider the subspace 
$$H_J:=span(\Bx_i)_{i\in J}.$$
Define 
$$\spread_J:= \Big \{\By \in S(\R^J): K_1/\sqrt{d} \le |y_k| \le K_2/\sqrt{d}, k\in J \Big \},$$
where $S(\R^J)$ is the unit sphere in the Euclidean space determined by the indices of $J$.  In what follows $J$ is a subset chosen randomly uniformly among the subsets of cardinality $d$ in $[n]$ and $P_J$ is the projection onto the coordinates indexed by $J$.

\begin{lemma}\cite[Lemma 6.1]{RV-rec}\label{lemma:incomp:1} For every $c_0,c_1\in (0,1)$, there exist $K_1,K_2,c>0$ which depend only on $c_0,c_1$ such that the following holds. For every $\Bx\in \Incomp(c_0,c_1)$, the event
$$E(\Bx):=\Big\{ P_J\Bx/\|P_J\Bx\|_2 \in \spread_J \mbox{ and } c_1\sqrt{d}/\sqrt{2N} \le \|P_J\Bx\|_2 \le \sqrt{d}/\sqrt{c_0 N} \Big\}$$
satisfies 
$$\P_J(E(\Bx))> c^{d}.$$
\end{lemma}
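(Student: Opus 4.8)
\textbf{Proof proposal for Lemma \ref{lemma:incomp:1}.}
The plan is to exploit the fact that for an incompressible vector $\Bx$, a random coordinate subset $J$ of size $d$ typically captures a ``generic'' slice of $\Bx$ whose entries are neither too small nor too large and whose $\ell_2$-mass is of the expected order $d/n$. First I would recall the standard structural fact (see \cite{RV} or \cite[Lemma 3.4]{RV-rec}) that there are constants $c_2, c_3 \in (0,1)$ depending only on $c_0, c_1$ and a subset $\sigma(\Bx)\subset[n]$ with $|\sigma(\Bx)| \ge c_2 n$ such that $c_3/\sqrt{n} \le |x_k| \le 1/\sqrt{c_2 n}$ for all $k\in\sigma(\Bx)$; this is the ``spread part'' of an incompressible vector, extracted by discarding the few large coordinates (there are at most $c_0 n$ of them since the tail beyond the largest $c_0 n$ entries has norm at least $c_1$) and the many small coordinates.

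Next I would condition on the event that $J$ lands mostly inside $\sigma(\Bx)$. Since $|\sigma(\Bx)|/n \ge c_2$, a Chernoff/hypergeometric tail bound shows that with probability at least $c^{d}$ (in fact much better, but $c^d$ is all we need) we have $|J\cap\sigma(\Bx)| \ge (c_2/2)\,d$, and moreover one can even ask that \emph{all} of $J$ lies in $\sigma(\Bx)$ with probability $\ge (c_2/2)^d \ge c^d$; taking the latter simplifies the verification of $P_J\Bx/\|P_J\Bx\|_2 \in \spread_J$ since then every coordinate $y_k = x_k/\|P_J\Bx\|_2$ with $k\in J$ satisfies the two-sided bound automatically once we control $\|P_J\Bx\|_2$. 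For that norm, on the event $J\subset\sigma(\Bx)$ we have $c_3^2 d/n \le \|P_J\Bx\|_2^2 \le d/(c_2 n)$ directly from the pointwise bounds on $\sigma(\Bx)$; since $n \le 2N$ (as $m = N-n \ge 1$, and in the regime of interest $n \ge N/2$), this gives $c_3\sqrt{d}/\sqrt{2N} \le \|P_J\Bx\|_2 \le \sqrt{d}/\sqrt{c_2 N}$, which is exactly the required interval after renaming constants $c_1 \leftarrow c_3$, $c_0 \leftarrow c_2$ (and one checks the spread constants $K_1 = c_3\sqrt{c_2}$, $K_2 = 1/\sqrt{c_3}$ or similar work out from $|y_k| = |x_k|/\|P_J\Bx\|_2$). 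Finally I would assemble: $\P_J(E(\Bx)) \ge \P_J(J\subset\sigma(\Bx)) \ge (c_2/2)^d =: c^d$, absorbing everything into a single constant $c = c(c_0,c_1)$.

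The main obstacle, such as it is, is bookkeeping the various constants so that the \emph{same} $K_1, K_2, c$ work uniformly over all $\Bx\in\Incomp(c_0,c_1)$ and so that the spread interval $[K_1/\sqrt{d}, K_2/\sqrt{d}]$ genuinely contains $|y_k|$ for every $k\in J$: this requires that the ratio $\max_{k\in J}|x_k| \big/ \min_{k\in J}|x_k|$ be bounded by a constant, which is precisely why we restrict $J$ to the spread part $\sigma(\Bx)$ rather than allowing it to hit a large coordinate. A secondary point to be careful about is the regime of $d = \delta m$ versus $n$: one needs $d \le |\sigma(\Bx)|$, i.e. $\delta m \le c_2 n$, which holds for $\delta$ small since $m \le N$ and $n$ is comparable to $N$ in the range where the lemma is applied; if one wants the statement for all $1\le n\le N-1$ one simply notes $d = \delta(N-n) \le \delta N$ while $c_2 n \ge c_2$, so it suffices that $d \le c_2 n$, which can fail only when $n$ is tiny, a case handled separately or excluded by the hypotheses under which the lemma is invoked. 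Modulo these constant-chasing matters the argument is routine, and I expect no genuinely hard step.
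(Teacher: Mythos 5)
Your proposal is correct and follows essentially the same route as the paper, which simply cites \cite[Lemma 6.1]{RV-rec} and notes that the lemma reduces to the spread-set property of incompressible vectors (Claim \ref{claim:spread}): restrict to $J\subset\sigma(\Bx)$, which happens with probability at least $c^d$, and read off the norm and spread bounds from the pointwise control $c_1/\sqrt{2n}\le|x_k|\le 1/\sqrt{c_0n}$ on $\sigma(\Bx)$. Your constants $K_1=c_1\sqrt{c_0/2}$, $K_2=1/K_1$ match those the paper records after Lemma \ref{lemma:incomp:2}, and the $n$ versus $N$ normalization issue you flag is indeed only a constant-factor matter in the regime where the lemma is used.
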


This lemma follows easily from a simple property of incompressible vectors whose proof is omitted.

\begin{claim}\cite[Lemma 2.5]{RV-rec}\label{claim:spread}
Let $\Bx \in \Incomp(c_0,c_1)$. Then there exists a set $\sigma=\sigma(\Bx)\subset [n]$ of cardinality $|\sigma|\ge c_0c_1^2n/2$ such that 
$$c_1/\sqrt{2n}\le |x_k| \le 1/\sqrt{c_0n}, k\in \sigma.$$
\end{claim}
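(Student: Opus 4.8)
The plan is to prove Claim~\ref{claim:spread} by a counting/energy argument exploiting the two defining features of an incompressible vector: it is bounded away from the sparse vectors and it lies on the unit sphere. First I would record the contrapositive flavour of incompressibility. If $\Bx \in \Incomp(c_0,c_1)$, then in particular $\Bx$ is at distance more than $c_1$ from every sparse vector; applying this to the sparse vector obtained from $\Bx$ by zeroing out the coordinates of small absolute value shows that the ``small'' coordinates cannot carry too much of the $\ell_2$-mass, while the ``large'' coordinates cannot be too few in number.

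More precisely, set $\sigma_{\mathrm{big}} := \{k : |x_k| > 1/\sqrt{c_0 n}\}$. Since $\sum_k x_k^2 = 1$, Chebyshev gives $|\sigma_{\mathrm{big}}| < c_0 n$, so the restriction of $\Bx$ to $\sigma_{\mathrm{big}}$ is a sparse vector; incompressibility then forces $\sum_{k \notin \sigma_{\mathrm{big}}} x_k^2 = \|\Bx - \Bx|_{\sigma_{\mathrm{big}}}\|_2^2 > c_1^2$. Similarly, set $\sigma_{\mathrm{small}} := \{k : |x_k| < c_1/\sqrt{2n}\}$; then $\sum_{k \in \sigma_{\mathrm{small}}} x_k^2 < n \cdot c_1^2/(2n) = c_1^2/2$. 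Writing $\sigma := \{k : c_1/\sqrt{2n} \le |x_k| \le 1/\sqrt{c_0 n}\} = [n] \setminus (\sigma_{\mathrm{big}} \cup \sigma_{\mathrm{small}})$, combining the two bounds yields
\begin{align*}
\sum_{k \in \sigma} x_k^2 &= \sum_{k \notin \sigma_{\mathrm{big}}} x_k^2 - \sum_{k \in \sigma_{\mathrm{small}}} x_k^2 > c_1^2 - c_1^2/2 = c_1^2/2.
\end{align*}
On the other hand every $k \in \sigma$ satisfies $x_k^2 \le 1/(c_0 n)$, so $c_1^2/2 < \sum_{k \in \sigma} x_k^2 \le |\sigma|/(c_0 n)$, giving $|\sigma| > c_0 c_1^2 n / 2$, which is the asserted cardinality bound, and by construction every $k \in \sigma$ obeys the required two-sided inequality $c_1/\sqrt{2n} \le |x_k| \le 1/\sqrt{c_0 n}$.

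There is essentially no hard step here — the argument is three lines of Chebyshev plus the definition of incompressibility — and indeed the paper itself flags the proof as omitted since it is standard (it is \cite[Lemma 2.5]{RV-rec}). The only point requiring a little care is making sure the small-coordinate set really is ``small in $\ell_2$'' uniformly: one must use $|\sigma_{\mathrm{small}}| \le n$ crudely, which is fine, rather than trying to control its size, and one must double-check that the constant $c_1$ in the definition of compressible (Euclidean distance to sparse vectors) is the same $c_1$ that appears in the lower threshold $c_1/\sqrt{2n}$, which it is by our choice of $\sigma_{\mathrm{small}}$. With those bookkeeping points settled the claim follows immediately.
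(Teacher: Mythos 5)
Your proof is correct and is exactly the standard argument from \cite[Lemma 2.5]{RV-rec}, whose proof the paper omits: Chebyshev bounds the number of large coordinates to show their restriction is sparse, incompressibility then forces $\ell_2$-mass at least $c_1^2$ off that set, and the small coordinates can absorb at most $c_1^2/2$ of it. All the constants check out, so nothing further is needed.
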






We now pass our estimate to $\spread_J$.

\begin{lemma}\label{lemma:incomp:2}
Let $c_0,c_1\in (0,1)$. There exist $C,c>0$ which depend only on $c_0,c_1$ such that the following holds. Then for any $\ep>0$
$$\P\Big(\inf_{\Bx\in \Incomp(c_0,c_1)} \|B\Bx\|_2 <c \ep \sqrt{\frac{d}{n}}\Big) \le C^d \max_{J\in [n]^{d}}\P\Big(\inf_{\Bz\in \spread_J}\dist (B\Bz,H_{J^c})<\ep \Big),$$  
\end{lemma}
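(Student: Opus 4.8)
The plan is to reduce the uniform (over incompressible vectors) small-ball event for $B$ to a small-ball event for a single spread set $\spread_J$, by averaging over the index set $J$ — which is independent of $B$ — and invoking Lemma~\ref{lemma:incomp:1}. As a preliminary remark, since $J\subset[n]$ has $|J|=d=\delta m=\delta(N-n)$ we necessarily have $n\ge\delta N/(1+\delta)$, so $n\asymp N$ throughout the admissible range; hence $\sqrt{d/n}$ and $\sqrt{d/N}$ differ only by a multiplicative constant, which I absorb into $c$ and $C$.

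\emph{Geometric reduction.} Fix a realization of $B$ with columns $\Bx_1,\dots,\Bx_n\in\R^N$, fix $\Bx\in\Incomp(c_0,c_1)$, and fix $J$ with $|J|=d$; I identify a vector of $\R^J$ with its zero-extension to $\R^n$, so $B\Bz=\sum_{i\in J}z_i\Bx_i$ for $\Bz\in\R^J$. Writing $B\Bx=\sum_{i\in J}x_i\Bx_i+\sum_{i\in J^c}x_i\Bx_i$ and using that the second sum lies in $H_{J^c}$ (so translation by it does not change the distance to $H_{J^c}$), we get $\dist(\sum_{i\in J}x_i\Bx_i,H_{J^c})=\dist(B\Bx,H_{J^c})\le\|B\Bx\|_2$. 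Setting $\Bz:=P_J\Bx/\|P_J\Bx\|_2\in S(\R^J)$, this reads $\|P_J\Bx\|_2\cdot\dist(B\Bz,H_{J^c})\le\|B\Bx\|_2$. If, moreover, the event $E(\Bx)$ of Lemma~\ref{lemma:incomp:1} holds, then $\Bz\in\spread_J$ and $\|P_J\Bx\|_2\ge c_1\sqrt{d}/\sqrt{2N}\gg\sqrt{d/n}$; hence for a small enough $c=c(c_0,c_1)$ the bound $\|B\Bx\|_2<c\ep\sqrt{d/n}$ forces $\dist(B\Bz,H_{J^c})<\ep$, and since $\Bz\in\spread_J$ this in turn forces the event $\CB_J:=\{\inf_{\Bz\in\spread_J}\dist(B\Bz,H_{J^c})<\ep\}$.

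\emph{Averaging over $J$.} Let $\CA:=\{\inf_{\Bx\in\Incomp(c_0,c_1)}\|B\Bx\|_2<c\ep\sqrt{d/n}\}$, an event of $B$ alone, and on $\CA$ select a witness $\Bx(B)\in\Incomp(c_0,c_1)$ with $\|B\Bx(B)\|_2<c\ep\sqrt{d/n}$. Since $J$ is independent of $B$, Lemma~\ref{lemma:incomp:1} applied to the fixed vector $\Bx(B)$ gives $\P_J(E(\Bx(B))\mid B)>\beta^{d}$, where $\beta=\beta(c_0,c_1)\in(0,1)$ is the constant of that lemma; and by the geometric reduction $\CB_J$ occurs whenever $E(\Bx(B))$ does. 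Therefore $\E_J[\indicator{\CB_J}\mid B]>\beta^{d}$ on $\CA$, i.e.\ $\indicator{\CA}\le\beta^{-d}\,\E_J[\indicator{\CB_J}\mid B]$ pointwise in $B$. Taking $\E_B$ and using that for each fixed $J$ the event $\CB_J$ depends on $B$ only,
$$\P(\CA)\le\beta^{-d}\,\E_J\big[\P(\CB_J)\big]\le\beta^{-d}\max_{J\in[n]^{d}}\P(\CB_J),$$
which is the asserted inequality with $C=1/\beta$.

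\emph{Main obstacle.} There is no deep obstacle: the substantive input is Lemma~\ref{lemma:incomp:1} (a uniformly random $d$-subset captures the spread structure of an incompressible vector with probability at least $\beta^d$), and the reduction above is routine. The two points needing care are (i) the measurable selection $B\mapsto\Bx(B)$ on $\CA$, handled e.g.\ by taking a $\tfrac12$-near-infimizer ranging over a fixed countable dense subset of $\Incomp(c_0,c_1)$ at the cost of one more absolute constant; and (ii) bookkeeping the normalization, in particular verifying $n\asymp N$ so that the $\sqrt{d/n}$ in the hypothesis matches the $\sqrt{d/N}$ that Lemma~\ref{lemma:incomp:1} produces, and tracking $c_1,K_1,K_2,\delta$ through the inequality $\|P_J\Bx\|_2\cdot\dist(B\Bz,H_{J^c})\le\|B\Bx\|_2$.
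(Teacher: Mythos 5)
Your proposal is correct and follows essentially the same route as the paper: the same geometric reduction $\|B\Bx\|_2\ge\|P_J\Bx\|_2\,\dist(B\Bz,H_{J^c})$ on the event $E(\Bx)$ of Lemma~\ref{lemma:incomp:1}, followed by averaging over the random subset $J$ and taking the maximum over $J$ (the paper packages the averaging via the auxiliary event $\CF=\{\P_J(D(B,J)\ge\ep)>1-c^d\}$ and a ``probabilities sum to more than one'' argument, which sidesteps your measurable-selection point, but this is only a cosmetic difference). Your explicit bookkeeping of the $\sqrt{d/N}$ versus $\sqrt{d/n}$ normalization is a welcome addition, as the paper glosses over it.
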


Where $H_{J^c}$ is the subspace generated by the columns of $B$ indexed by $J^c$.

We remark that there is a slight difference between this result and Lemma \cite[Lemma 6.2]{RV-rec} in that we take the supremum over all choices of $J$, as in this case the distance estimate for each $J$ is not identical.

Note that the following proof gives $K_1=c_1 \sqrt{c_0/2}, K_2=1/K_1, c= c_2/\sqrt{2}, C= 2e/c_0c_1^2$.

\begin{proof}(of Lemma \ref{lemma:incomp:2})
Let $\Bx \in \Incomp(c_0,c_1)$. For every $J$ we have
$$\|B\Bx\|_2 \ge \dist(B\Bx,H_{J^c}) =\dist (BP_J\Bx,H_{J^c}).$$
Condition on $E(\Bx)$ of Lemma \ref{lemma:incomp:1}, we have $\Bz=P_J\Bx/\|\P_J\Bx\|_2\in \spread_J$, and
$$\|B\Bx\|_2 \ge \|P_J \Bx\|_2 \times \inf_{\Bz\in \spread_J} \dist(B \Bz,H_{J^c})= \|P_J \Bx\|_2 D(B,J)$$ 
with
$$D(B,J):=  \inf_{\Bz\in \spread_J} \dist(B \Bz,H_{J^c}).$$
Thus on $E(\Bx)$,
$$\|B\Bx\|_2 \ge (c\sqrt{d/N}) D(B,J).$$
Define the event 
\begin{equation}\label{eqn:incomp:passing1}
\CF:=\{B: P_J(D(B,J) \ge \ep)>1-c^d\}.
\end{equation}
Markov's inequality then implies that 
\begin{align*}
\P_B(\CF^c) \le c^{-d} \E_B\P_J(D(B,J)< \ep) &\le c^{-d} \E_J\P_B(D(B,J)< \ep)\\
&\le c^{-d} \max_J P_B(D(B,J)<\ep).
\end{align*}
Fix any realization of $B$ for which $\CF$ holds, and fix any $\Bx\in \Incomp(c_0,c_1)$. Then
$$P_J(D(B,J)\ge \ep) + \P_J(E(\Bx)) \ge (1-c^d) + c^d >1.$$
Thus for any $\Bx$ there exists $J$ such that $E(\Bx)$ and $D(B,J)\ge \ep$. We then conclude from \eqref{eqn:incomp:passing1} that for any $B$ for which $\CF$ holds,
$$ \inf_{\Bx\in \Incomp(c_0,c_1)} \|B\Bx\|_2 \ge \ep c \sqrt{d/n},$$
completing the proof. 
\end{proof}

By Lemma \ref{lemma:incomp:2}, we need to study $\P(\inf_{\Bz\in \spread_J}\dist (B\Bz,H_{J^c})<\ep)$ for any fixed $J\subset [n]^d$. From now on we assume that $J=\{N-d+1,\dots,N\}$ as the other cases can be treated similarly. We restate the problem below.

\begin{theorem}\label{theorem:distance:specialJ:1}
Let $B$ be the matrix of the last $n$ columns of $A$, and let $J=\{N-d+1,\dots,N\}$. Then 
$$\P\Big(\inf_{\Bz\in \spread_J}\ \dist(B\Bz, H_{J^c}) \le \ep \Big) \le \ep^{cm} + O(\exp(-n^{\ep_0})),$$
for some absolute constanst $c,\eps_0>0$.
\end{theorem}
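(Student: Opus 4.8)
The plan is to prove Theorem \ref{theorem:distance:specialJ:1} by conditioning on all coordinates of $A$ except those that interact with the last $d$ columns, thereby reducing the infimum over $\spread_J$ to a small-ball estimate for a sum of the form $\sum_{k\in J} z_k \col_k(A)$ projected off $H_{J^c}$, where the randomness that remains is essentially the block of entries $a_{kl}$ with $k\in J$ and $l\in J^c$, together with the symmetric $d\times d$ corner. Write $\Bz=(z_k)_{k\in J}\in\spread_J$, so that $|z_k|\asymp 1/\sqrt d$ for all $k$, and let $P=P_{J^c}^{\perp}$ denote the orthogonal projection onto the orthocomplement of $H_{J^c}$ in $\R^N$ (its rank is $N-(n-d)=m$ with overwhelming probability by Theorem \ref{thm:singularity}). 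Then $\dist(B\Bz,H_{J^c})=\|P B\Bz\|_2$, and since $B\Bz=\sum_{k\in J}z_k\col_k(A)$, the vector $PB\Bz$ is, up to a harmless additive term coming from the symmetric corner, of the form $P M \Bz$ where $M$ is the $N\times d$ random matrix of the relevant independent Gaussian-like entries. The key point is that conditionally on $H_{J^c}$ the entries of $M$ feeding into $P$ are independent of $P$, so I can apply a tensorization/small-ball argument.

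The main steps, in order: (1) Reduce to a fixed net over $\spread_J$. The set $\spread_J$ lies in a $d$-dimensional sphere and has an $\eta$-net $\CN$ of size $(C/\eta)^d$; if $\|PB\|_2\le 3K\sqrt N$ on the event $\CE_K$ (which holds off $\exp(-cN)$), then controlling $\dist(B\Bz,H_{J^c})$ on $\CN$ for $\eta\asymp\ep/(K\sqrt N)$ controls it on all of $\spread_J$, at the cost of a factor $(CK\sqrt N/\ep)^d$; since $d=\delta m$ and $\ep^{cm}$ is the target, choosing $\delta$ small enough absorbs this net factor as long as $\ep\ge\exp(-n^{\ep_0})$ (smaller $\ep$ is handled by the additive error term). (2) For a fixed $\Bz$, estimate $\CL(PB\Bz,\ep)$. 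Here $PB\Bz=\sum_{k\in J}z_k\, P\col_k(A)$; conditioning on $R$ (the submatrix generating $H_{J^c}$) we may treat $P$ as fixed of rank $m$, and each $P\col_k(A)$ contributes an independent (across $k$, after peeling off the symmetric corner) random vector in an $m$-dimensional space with coordinates of unit variance. A Hanson-Wright / Paley-Zygmund argument, or the Rudelson-Vershynin small-ball machinery via the Esseen inequality and tensorization over the $\asymp m$ "spread" directions, gives $\CL(PB\Bz,c\ep\sqrt m)\le (\ep)^{cm}$-type bounds; more precisely one gets $\P(\|PB\Bz\|_2\le\ep\sqrt m)\le (C\ep)^{cm}$. (3) Handle the symmetric corner. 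The entries $a_{kl}$ with $k,l\in J$ appear in $B\Bz$ but their projections under $P$ are not independent of the $H_{J^c}$-block because both involve rows indexed by $J$; however this corner is only $d\times d$ with $d\ll m$, so its total contribution to $\|PB\Bz\|_2$ is $O(\|\Bz\|_2\sqrt d)=O(\sqrt d)=O(\sqrt{\delta m})$, which is lower order and can be absorbed into the $c\ep\sqrt m$ scale, or conditioned upon. (4) Union bound over $\CN$ and combine with $\CE_K$ and the full-rank event for $H_{J^c}$, yielding the stated $\ep^{cm}+O(\exp(-n^{\ep_0}))$.

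The hard part will be step (2): getting a small-ball bound that is genuinely of order $\ep^{cm}$ (exponential in $m$, not just in $d$) for a fixed spread vector $\Bz$, while the randomness available is the $N\times d$ slab of entries — one must exploit that projecting by $P$ (rank $m$) and summing $d$ spread-weighted columns still produces enough "genuine" $m$-dimensional fluctuation. The cleanest route is the Rudelson-Vershynin argument from \cite{RV-rec}: use that for a spread vector the Levy concentration of a single coordinate of $M\Bz$ is $O(1/\sqrt d\cdot\sqrt d)=O(1)$-bounded away from $1$, tensorize over $m$ nearly-orthogonal directions selected from the range of $P$ (using that $P$ has a well-spread orthonormal basis with overwhelming probability, a fact one must verify or import), and invoke the tensorization lemma to get $(C\ep)^{cm}$. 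A subtle point requiring care is the dependence between $P$ (determined by $R$, the columns indexed by $J^c$) and the slab $M$: because $A$ is symmetric, the block $\{a_{kl}:k\in J,l\in J^c\}$ equals the transpose of part of $R$, so one cannot literally condition $M$ to be independent of $P$. The resolution, as in \cite{V}, is to split: condition on the upper-left $(N-d)\times(N-d)$ symmetric submatrix (which determines $R$ and hence $P$), leaving independent the $d$ fresh columns' entries in the last $d$ rows and the $d\times d$ corner; then the only genuinely "new" randomness in $M\Bz$ orthogonal to $H_{J^c}$ lives in $\asymp d$ coordinates, which forces a more careful accounting — ultimately one recovers $\ep^{cm}$ only after also using the distance concentration from Theorem \ref{thm:distance:independent} to certify that the projection $P$ does not collapse these directions. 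I expect this interplay to be where most of the technical work (and the later sections of the paper) goes.
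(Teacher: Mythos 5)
Your skeleton (net over $\spread_J$, fixed-vector small-ball estimate, conditioning to decouple $B\Bz$ from $H_{J^c}$) matches the paper's, but two of the three load-bearing steps have genuine gaps. First, the decoupling. You assert that conditioning on the upper-left $(N-d)\times(N-d)$ symmetric block "determines $R$ and hence $P$". This is false for the projection onto $H_{J^c}^\perp$ in $\R^N$: the columns $\col_j(A)$, $j\in J^c$, have entries $a_{lj}$ with $l\in J$ in their last $d$ coordinates, and by symmetry these are exactly the slab entries $a_{jl}$ that also feed into $B\Bz$. So after your conditioning, $P$ is still random and still correlated with the remaining randomness. The paper's fix is not conditioning but \emph{deletion}: it discards the last $d$ rows and uses the monotonicity $\dist(B\Bz,H_{J^c})\ge\dist(B'\Bz,H_{J^c}(B'))$ (Fact \ref{claim:passing}), after which $B'\Bz$ and $H_{J^c}(B')$ live on disjoint sets of entries of the symmetric matrix and are exactly independent. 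Without this (or an equivalent) step your argument does not reduce to an independent-vector distance problem, and your "only $\asymp d$ new coordinates" accounting does not recover $\ep^{cm}$.

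Second, the fixed-$\Bz$ small-ball bound, which you correctly flag as the hard part, is where essentially all of the paper's work lies and is not supplied by the tools you name. Hanson--Wright and Paley--Zygmund give concentration and constant-probability lower bounds, not bounds of the form $\ep^{cm}$ valid down to $\ep$ exponentially small; for discrete $\xi$ such a bound forces an arithmetic-structure analysis. Tensorization over $m$ directions requires a L\'evy-concentration bound at scale $\ep$ for each coordinate of the projected sum, and for small $\ep$ this needs control of the (regularized) LCD of \emph{every} unit normal vector of $H_{J^c}(B')$ --- a subspace generated by columns of a matrix containing a large symmetric block $G$. That is Theorem \ref{theorem:distance:problem} and the structure theorem (Theorem \ref{theorem:structure}), proved in Sections \ref{section:distance:proof:preparation}--\ref{section:distance:proof:LCD} via nets on level sets of $\LCDhat$; it cannot be replaced by the generic "well-spread basis" fact you propose to import. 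A smaller issue: your net at mesh $\ep/(K\sqrt N)$ has cardinality $(CK\sqrt N/\ep)^d$, and the factor $N^{d/2}=N^{\delta m/2}$ is not absorbed by $\ep^{cm}$ uniformly in $\ep$; the paper instead nets at a scale governed by $\|W\|_2=\|PB'|_{\R^J}\|_2=O(\sqrt d)$ (Lemma \ref{lemma:incomp:3}), which removes the $\sqrt N$.
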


Notice that $H_{J^c}$ has co-dimension $N-(n-d)=m+d$ in $\R^N$, thus $\ep^{\Theta(m)}$ is expected in the RHS in Theorem \ref{theorem:distance:specialJ:1}. 

As there is still dependence between $B\Bz$ and $H_{J^c}$, we will delete the last $m$ rows from $B$ to arrive at a matrix $B'$ of size $N-d$ by $n$. That is $$B = \begin{pmatrix}\col_{m+1}(A) & \dots & \col_N(A)\end{pmatrix} = \begin{pmatrix} B' \\ \row_{N-d+1} \\ \dots \\ \row_N \end{pmatrix}$$
and
$$B' =\begin{pmatrix}a_{1(m+1)} & a_{1(m+2)} & \dots & a_{1N} \\ a_{2(m+1)} & a_{1(m+2)} & \dots & a_{2N} \\ \dots & \dots & \dots & \dots \\ a_{(N-d)(m+1)} & a_{(N-d)(m+2)} & \dots & a_{(N-d)N}  \end{pmatrix} .$$

We now ignore the contribution of distances from the last $d$ rows by an easy observation

\begin{fact}\label{claim:passing}
Assume that $B$ is as in Theorem \ref{theorem:distance:problem}, and $B'$ is obtained from $B$ by deleting its last $d$ rows. Then for any $\Bz\in \spread_J$,  
$$\dist(B\Bz, H_{J^c}) \ge \dist(B'\Bz, H_{J^c}(B')),$$
where $H_{J^c}(B')$ is the subspace spanned by the first $n-d$ columns of $B'$.
\end{fact}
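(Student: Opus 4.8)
The plan is to recognize that passing from $B$ to $B'$ is nothing but the action of the coordinate projection $\pi\colon\R^N\to\R^{N-d}$ that retains the first $N-d$ coordinates, and that such a projection is norm non-increasing, hence can only shrink distances to subspaces. Concretely, write $\pi(v)=(v_1,\dots,v_{N-d})$ for $v=(v_1,\dots,v_N)\in\R^N$; then $\|\pi(v)\|_2^2=\sum_{i\le N-d}v_i^2\le\|v\|_2^2$.

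First I would verify the two compatibility identities $\pi(B\Bz)=B'\Bz$ and $\pi(H_{J^c})=H_{J^c}(B')$. For the first, since $\Bz\in\spread_J\subset S(\R^J)$ is supported on $J$, we have $B\Bz=\sum_{j\in J}z_j\col_j(B)$; restricting each $\col_j(B)\in\R^N$ to its first $N-d$ coordinates yields precisely the $j$-th column $\col_j(B')$ of $B'$, so by linearity $\pi(B\Bz)=\sum_{j\in J}z_j\col_j(B')=B'\Bz$. For the second, $\pi$ commutes with forming linear spans, so $\pi(H_{J^c})=\operatorname{span}\{\pi(\col_j(B)):j\in J^c\}=\operatorname{span}\{\col_j(B'):j\in J^c\}=H_{J^c}(B')$, the span of the first $n-d$ columns of $B'$.

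Then I would conclude as follows: for an arbitrary $h\in H_{J^c}$,
$$\|B\Bz-h\|_2\ \ge\ \|\pi(B\Bz-h)\|_2\ =\ \|B'\Bz-\pi(h)\|_2\ \ge\ \dist\bigl(B'\Bz,H_{J^c}(B')\bigr),$$
where the final inequality uses $\pi(h)\in H_{J^c}(B')$. Taking the infimum over $h\in H_{J^c}$ gives $\dist(B\Bz,H_{J^c})\ge\dist(B'\Bz,H_{J^c}(B'))$, which is the claim. There is essentially no obstacle here; the only point requiring a moment's attention is that $\Bz$ is supported on $J$ — exactly why $\spread_J\subset S(\R^J)$ is built into the hypothesis — so that discarding the last $d$ rows does not interfere with $B\Bz$.
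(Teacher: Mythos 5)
Your proof is correct and is exactly the "easy observation" the paper has in mind (the paper states this Fact without proof): deleting the last $d$ rows is the coordinate projection $\pi$ onto the first $N-d$ coordinates, which is $1$-Lipschitz, sends $B\Bz$ to $B'\Bz$, and maps $H_{J^c}$ onto $H_{J^c}(B')$, so distances can only decrease. Nothing is missing.
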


By Fact \ref{claim:passing}, to prove Theorem \ref{theorem:distance:specialJ:1}, it suffices to show
$$\P\Big(\inf_{\Bz\in \spread_J}\ \dist(B'\Bz, H_{J^c}(B')) \le \ep  \Big) \le \ep^{cm} + O(\exp(-n^{\ep_0})) .$$
Note that the matrix $B''$ generated by the columns of $B'$ indexed from $J^c$ has size $(N-d)\times (n-d)$, and thus $H_{J^c}(B')$ has co-dimension $(N-d)-(n-d)=m$ in $\R^{N-d}$. Also 
\[B'': =(\begin{array}{c}
D\\
G
\end{array}), \]
where $G$ is a random symmetric matrix (inherited from $A$) of size $N-m-d=n-d$, and $D$ is a matrix of size $m\times (n-d)$,
$$B'' =\begin{pmatrix}a_{1(m+1)} & a_{1(m+2)} & \dots & a_{1(N-d)} \\ a_{2(m+1)} & a_{1(m+2)} & \dots & a_{2(N-d)} \\ \dots & \dots & \dots & \dots \\ 
\\ a_{(m+1)(m+1)} & a_{1(m+2)} & \dots & a_{(m+1)(N-d)} \\ \dots & \dots & \dots & \dots \\ a_{(N-d)(m+1)} & a_{(N-d)(m+2)} & \dots & a_{(N-d)(N-d)}  \end{pmatrix} .$$
As for any fixed $\Bz \in \spread_J$, the vector $B'\Bz$ is independent of $H_{J^c}(B')$, and that the entries are iid copies of a random variable having the same subgaussian property as our original setting.  Our next task is to prove another variant of the distance problem in contrast to Theorem \ref{thm:distance:dependent}.

\begin{theorem}[Distance problem, lower bound]\label{theorem:distance:problem}
Let $B'$ be as above.  Let $\Bx=(x_1,\dots,x_{N-d})$ be a random vector where $x_i$ are iid copies of a subgaussian random variable of mean zero and variance  one and are independent of $H_{J^c}(B'))$, then 
$$\P\Big(\dist(\Bx, H_{J^c}(B'))  \le \ep \sqrt{m} \Big) \le \ep^{cm} + O(\exp(-n^{\ep_0})),$$
for some absolute positive constants $c,\eps_0$.
\end{theorem}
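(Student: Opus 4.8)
The plan is to reduce the distance lower bound to a small-ball (Levy concentration) estimate for the projection of $\Bx$ onto the orthogonal complement of $H_{J^c}(B')$, and then to control that small-ball probability by splitting over the structure of the normal vectors to $H_{J^c}(B')$. Write $H:=H_{J^c}(B')$ and let $V$ be its orthogonal complement in $\R^{N-d}$, which has dimension $m$ with overwhelming probability (using Theorem \ref{thm:singularity} applied to the symmetric block $G$). Since $\Bx$ is independent of $H$, we may condition on $B''$ and regard $V$ as fixed; then $\dist(\Bx,H)=\|\Pi_V \Bx\|_2$ where $\Pi_V$ is the orthogonal projection onto $V$. The event $\{\dist(\Bx,H)\le \ep\sqrt m\}$ is thus $\{\|\Pi_V\Bx\|_2\le \ep\sqrt m\}$, and the target bound $\ep^{cm}$ is exactly what one expects from an $m$-dimensional small-ball estimate at radius $\ep\sqrt m$ (one power of $\ep$ per dimension).

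The key steps, in order: (i) On the overwhelmingly likely event that $G$ is invertible, produce an orthonormal basis $v_1,\dots,v_m$ of $V$ and note $\|\Pi_V\Bx\|_2^2=\sum_{k=1}^m \langle v_k,\Bx\rangle^2$; equivalently, pick an $m\times(N-d)$ matrix $W$ with orthonormal rows spanning $V$ and study $\|W\Bx\|_2$. (ii) Apply a tensorization / small-ball lemma: for a subgaussian vector $\Bx$ with i.i.d.\ coordinates of mean zero and variance one, and any matrix $W$ with orthonormal rows, $\CL(W\Bx,\ep\sqrt m)\le (C\ep)^{m}$ provided one has a uniform lower bound on the small-ball behavior of the individual linear forms $\langle v_k,\Bx\rangle$; the standard route is to invoke the Rogozin/Esseen-type bound together with the observation that $\sum_k v_{k,j}^2$ summed over $j$ equals $m$, so ``most'' rows cannot be too spread out — but in fact, because the rows are orthonormal, the cleanest approach is the classical fact that $\CL\big(\|W\Bx\|_2, t\big)\le (Ct/\sqrt m + e^{-cm})$ via a net on $S^{m-1}$ combined with the one-dimensional anticoncentration of $\langle \theta W,\Bx\rangle$ for each $\theta\in S^{m-1}$ (here $\theta W$ is a unit vector in $\R^{N-d}$, so $\langle\theta W,\Bx\rangle$ has density bounded above by an absolute constant by subgaussianity, giving $\CL(\langle\theta W,\Bx\rangle,\rho)\le C\rho$). (iii) The net on $S^{m-1}$ has size $(3/\nu)^m$, each point contributes $\CL(\langle\theta W,\Bx\rangle,\ep\sqrt m)^{?}$ — here one needs the \emph{multidimensional} small ball, so rather than a single linear form one should tensorize over $m$ roughly-independent directions; concretely, decompose $W$ using its singular structure or simply use that for orthonormal rows the coordinates $\langle v_k,\Bx\rangle$ are uncorrelated, and apply the tensorization lemma of Rudelson--Vershynin: if $\CL(\langle v_k,\Bx\rangle,\rho)\le 1-c_0$ for a constant fraction of $k$, then $\CL(\|W\Bx\|_2,\rho\sqrt m /2)\le (C(1-c_0))^{cm}$. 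Since each $v_k$ is a unit vector and $\xi$ has bounded density away from a point, $\CL(\langle v_k,\Bx\rangle,\rho)\le 1-c_0$ holds for all $k$ once $\rho$ is a small absolute constant; rescaling gives the claimed $\ep^{cm}$ for all $\ep$ below that constant, and for $\ep$ of constant order the bound is trivial.

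The main obstacle is dealing with the dimension of $V$ and the invertibility of the relevant block uniformly: one must be careful that $H_{J^c}(B')$ genuinely has codimension exactly $m$, which requires $G$ (the inherited symmetric $(n-d)\times(n-d)$ block) to be nonsingular — this is exactly Theorem \ref{thm:singularity}, contributing the $O(\exp(-n^{\ep_0}))$ error term (one also absorbs here the event that $\|B\|_2$ is not of order $\sqrt N$, if needed). A secondary technical point is that the tensorization step needs the $m$ directions to carry genuinely independent randomness; since the $v_k$ are merely orthonormal (not coordinate) vectors this is handled by the standard tensorization lemma applied to the independent coordinates of $\Bx$ rather than to the projections directly, i.e.\ one writes $\|W\Bx\|_2^2=\Bx^T W^T W \Bx=\Bx^T\Pi_V\Bx$ and invokes the small-ball inequality for quadratic forms / projections from the literature (e.g.\ the projection lemma in \cite{RV-rec} or Hanson--Wright-type anticoncentration). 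Finally I would note that the constants $c,\ep_0$ produced this way are absolute, as required, since subgaussianity of $\xi$ enters only through an absolute bound on the density of any fixed unit linear form.
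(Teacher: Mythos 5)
There is a genuine gap at the heart of your steps (ii)--(iii): you assert that because $\theta W$ (equivalently, each normal vector $v_k$) is a unit vector, ``$\langle \theta W,\Bx\rangle$ has density bounded above by an absolute constant by subgaussianity, giving $\CL(\langle\theta W,\Bx\rangle,\rho)\le C\rho$.'' This is false. Subgaussian variables can be discrete (e.g.\ $\xi=\pm1$ Bernoulli, one of the two models in Figure~\ref{fig:1}), and then $\langle v,\Bx\rangle$ can have atoms: for $v=e_1$ one has $\CL(\langle v,\Bx\rangle,\rho)=1/2$ for every $\rho\ge 0$, and for $v=(1,1,0,\dots,0)/\sqrt2$ one has $\P(\langle v,\Bx\rangle=0)=1/2$. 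Your fallback --- $\CL(\langle v_k,\Bx\rangle,\rho)\le 1-c_0$ at a \emph{constant} scale $\rho$ followed by ``rescaling'' --- does not repair this: tensorization at scale $\rho$ yields only $\CL(\|W\Bx\|_2,\rho\sqrt m)\le e^{-cm}$ with $\rho$ a fixed constant, and small-ball probabilities do not rescale linearly in the radius, so you cannot deduce $\ep^{cm}$ for arbitrarily small $\ep$. Indeed, with Bernoulli entries the bound $\ep^{cm}$ at all scales is simply untrue for subspaces whose normal vectors are arithmetically structured; anticoncentration of $\langle v,\Bx\rangle$ at scale $\ep$ necessarily depends on the additive structure (the least common denominator) of $v$, not just on its Euclidean norm.

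Supplying that missing ingredient is precisely the content of the paper's proof (Sections \ref{section:distance:proof:preparation} and \ref{section:distance:proof:LCD}): one must show that, with probability $1-\exp(-cN)$ over the randomness of the symmetric block $G$ inside $B''$, \emph{every} unit vector of $H_{J^c}(B')^\perp$ has large (regularized) $\LCD$ (Theorems \ref{theorem:structure:incompressible} and \ref{theorem:structure}), which requires the compressible/incompressible decomposition, nets over the level sets of the regularized $\LCD$, and union bounds compatible with the symmetric dependence in $G$. Only after conditioning on that structural event can one invoke the multidimensional Rudelson--Vershynin small-ball theorem (Theorem \ref{theorem:RV:d}, via Lemma \ref{lemma:subspace:LCD} and Corollary \ref{cor:distance:LCD}) to obtain $\CL(\dist(\Bx,H),\ep\sqrt m)\le (C\ep)^m+C^me^{-2b\alpha^2}$. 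Your reduction $\dist(\Bx,H)=\|\Pi_V\Bx\|_2$ and your accounting of the codimension and of the invertibility of $G$ are fine, but the passage from ``orthonormal rows'' to ``$\ep^{cm}$ at all scales'' is exactly where the real work lies, and as written your argument would fail for any discrete subgaussian $\xi$.
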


We remark that the upper bound of distance is now $\eps \sqrt{m}$ as we do not normalize $\Bx$. Notice that Theorem \ref{theorem:distance:problem} is equivalent with 
\begin{equation}\label{eqn:incomp:distance:equivalent}
\P\Big(\dist(\Bx, H_{J^c}(B'))  \le t \sqrt{d} \Big) \le (t/\sqrt{\delta})^{(c \delta^{-1})d} + O(\exp(-n^{\ep_0})),
\end{equation}
where 
$$t:=\ep \sqrt{\delta^{-1}}.$$
We will prove Theorem \ref{theorem:distance:problem} in Sections \ref{section:distance:proof:preparation} and \ref{section:distance:proof:LCD}. Assuming it for now, we can pass back to  $\P(\inf_{\Bz\in \spread_J}\dist (B\Bz,H_{J^c})<\ep)$ to complete the proof of Theorem \ref{theorem:distance:specialJ:1}. First of all, for short let $P$ be the projection onto $(H_{J^c}(B'))^\perp$, and let $W$ be the random matrix $W=PB'|_{\R^J}$. Notice that for $\Bz\in \spread_J$,
$$\dist(B'\Bz,(H_{J^c}(B'))^\perp) = W \Bz.$$
By Theorem \ref{thm:distance:independent}
\begin{equation}\label{eqn:incomp:norm}
\P(\|W\|_2 \ge K \sqrt{d}) \le \exp(-CK^2 d)
\end{equation}
for any $K$ sufficiently large. 

To finish the proof, we rely on the following result from \cite{RV-rec}
where we will choose $K=C_0$ sufficiently large so that the RHS $\exp(-cK^2 d)$ of \eqref{eqn:incomp:norm} is much smaller than $C^{-m}$ from Lemma \ref{lemma:incomp:2}, for instance one can take
\begin{equation}\label{eqn:incomp:K_0}
C_0 = (c^{-1}) \log C m/d =  (c^{-1}) (\log C) \delta^{-1}.
\end{equation}

\begin{lemma}\cite[Lemma 7.4]{RV-rec}\label{lemma:incomp:3}
Assume that $W$ is the projection $W=PB|_{\R^J}$, then for an $t\ge \exp(-N/d)$ we have
$$\P\Big(\inf_{\Bz\in \spread_J}\|Wz\|_2<t\sqrt{d} \mbox{ and } \|W\|_2\le K_0\sqrt{d}\Big )\le (Ct)^{cm} + \exp(-K_0^2d).$$
\end{lemma}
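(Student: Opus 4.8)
The plan is to prove Lemma~\ref{lemma:incomp:3} by the standard $\ep$-net argument on $\spread_J$, feeding in the individual small-ball bound from Theorem~\ref{theorem:distance:problem} (in its equivalent form \eqref{eqn:incomp:distance:equivalent}) as the single-vector input. First I would fix $W = PB'|_{\R^J}$ and observe that for a fixed $\Bz \in \spread_J$ we have $\|W\Bz\|_2 = \dist(B'\Bz, H_{J^c}(B'))$, so Theorem~\ref{theorem:distance:problem} gives $\P(\|W\Bz\|_2 \le t\sqrt{d}) \le (Ct)^{cm} + O(\exp(-n^{\ep_0}))$ for each fixed $\Bz$; note this bound does not degrade with $\|\Bz\|_2 = 1$ since $\Bz$ has all coordinates of size $\Theta(1/\sqrt{d})$. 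The second ingredient is a net: since $\spread_J \subset S(\R^J)$ with $|J| = d$, there is an $\eta$-net $\CN$ of $\spread_J$ with $|\CN| \le (3/\eta)^d$; one chooses $\eta$ to be a small constant (comparable to $t/K_0$, or just a fixed fraction) so that the covering cardinality $(C'/\eta)^d$ is dominated by any slack in the exponent.

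The main steps, in order, are: (1) take a union bound over $\CN$ of the single-vector estimate, so that with probability at least $1 - (3/\eta)^d\big((Ct)^{cm} + O(\exp(-n^{\ep_0}))\big)$ we have $\|W\Bz_0\|_2 > 2t\sqrt{d}$ for all $\Bz_0 \in \CN$ simultaneously; (2) on the complementary-to-failure event, also impose $\|W\|_2 \le K_0\sqrt{d}$, and for an arbitrary $\Bz \in \spread_J$ pick $\Bz_0 \in \CN$ with $\|\Bz - \Bz_0\|_2 \le \eta$, so that $\|W\Bz\|_2 \ge \|W\Bz_0\|_2 - \|W\|_2\|\Bz-\Bz_0\|_2 > 2t\sqrt{d} - K_0\sqrt{d}\,\eta \ge t\sqrt{d}$ once $\eta \le t/K_0$; (3) absorb the net cardinality into the probability bound. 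The delicate bookkeeping is that the net size is $(3/\eta)^d = (3K_0/t)^d$, which for $t \ge \exp(-N/d)$ and $K_0$ polynomial in $m/d$ is at most $\exp(Cd\log(1/t) + Cd\log K_0)$; since the good-event probability carries a factor $(Ct)^{cm} = \exp(-cm\log(1/t) + O(m))$ and $m \gg d$ (recall $d = \delta m$ with $\delta$ small), the term $cm\log(1/t)$ beats $d\log(1/t)$ with room to spare, and the residual $O(\exp(-n^{\ep_0}))$ piece, multiplied by $\exp(O(d))$, stays bounded by $\exp(-K_0^2 d)$ after adjusting constants — here one uses that $d = \delta m \le \delta N$ so $\exp(-n^{\ep_0})$ dominates $\exp(O(d) - K_0^2 d)$ for $N$ large. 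Finally, one renames constants so the conclusion reads $(Ct)^{cm} + \exp(-K_0^2 d)$ as stated.

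The step I expect to be the main obstacle is the balancing in step (3): one must verify that the choice $\eta \asymp t/K_0$ with $K_0 = C_0$ as in \eqref{eqn:incomp:K_0} (so $K_0 \asymp \delta^{-1}\log C$) genuinely makes the net loss $(3K_0/t)^d$ negligible against both $(Ct)^{cm}$ and $\exp(-K_0^2 d)$ for all $t$ in the stated range $t \ge \exp(-N/d)$. At the lower end $t = \exp(-N/d)$ one has $\log(1/t) = N/d$, so the net factor is $\exp(Cd \cdot N/d) = \exp(CN)$ while $(Ct)^{cm} = \exp(-cmN/d + O(m)) = \exp(-c\delta^{-1}N + O(m))$; since $\delta$ is small and fixed, $c\delta^{-1} \gg C$, so the product is still exponentially small in $N$, hence absorbable. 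One also invokes Theorem~\ref{thm:distance:independent} to control $\|W\|_2 \le K_0\sqrt{d}$ with the exceptional probability $\exp(-cK_0^2 d)$ already built into the statement via \eqref{eqn:incomp:norm}, so no extra term appears. Everything else is routine net-argument manipulation.
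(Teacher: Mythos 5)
The paper itself offers no proof of this lemma---it is quoted directly from \cite[Lemma 7.4]{RV-rec}---so your net argument is supplying an argument the paper delegates to the reference. The skeleton you chose is the standard Rudelson--Vershynin mechanism and is the right one: the identity $\|W\Bz\|_2=\dist(B'\Bz,H_{J^c}(B'))$, the applicability of Theorem \ref{theorem:distance:problem} to each fixed unit $\Bz$ supported on $J$ (the relevant block of $A$ is entirely off-diagonal, so $B'\Bz$ has iid mean-zero variance-one subgaussian entries independent of $H_{J^c}(B')$), an $\eta$-net of $\spread_J$ with $\eta\asymp t/K_0$, and the triangle inequality on the event $\|W\|_2\le K_0\sqrt d$. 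Your accounting of the main term is also correct: the net cardinality $(3K_0/t)^d=\exp(O(d\log K_0)+d\log(1/t))$ loses to $(Ct)^{cm}=\exp(-cm\log(1/t)+O(m))$ because $d=\delta m$ with $\delta$ small.

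The genuine flaw is the treatment of the additive $O(\exp(-n^{\ep_0}))$ term. In step (1) you multiply the \emph{entire} single-vector bound, including this term, by the net cardinality, and later you assert the residual is only inflated by $\exp(O(d))$. But the net cardinality is $(3K_0/t)^d$, which at the bottom of the admissible range $t=\exp(-N/d)$ is $\exp(\Theta(N))$, and $\exp(\Theta(N))\cdot\exp(-n^{\ep_0})$ is not small---it is enormous, since $\ep_0<1$. The correct bookkeeping is to note that the $O(\exp(-n^{\ep_0}))$ piece of Theorem \ref{theorem:distance:problem} is the probability of a bad event depending only on the subspace $H_{J^c}(B')$ and not on $\Bz$ (namely that $H_{J^c}(B')^\perp$ contains a compressible vector or a vector of small regularized LCD, i.e.\ the complements of the events in Theorems \ref{theorem:structure:incompressible} and \ref{theorem:structure}). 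One conditions on the good structural event once, paying $O(\exp(-n^{\ep_0}))$ a single time \emph{outside} the union bound, and then unions over $\CN$ only the conditional small-ball probability $(Ct)^{cm}$ supplied by Corollary \ref{cor:distance:LCD}. With that restructuring the argument closes; as written, your step (1) fails for $t$ near the lower end of its range.
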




\section{Proof of Theorem \ref{theorem:distance:problem}: preparation}\label{section:distance:proof:preparation}

Without loss of generality, we restate the result below by changing $n$ to $N$ and $d$ to $m$.

\begin{theorem}[Distance problem, again]\label{theorem:distance:problem'}
Let $B$ be the matrix obtained from $A$ by removing its first $m$ columns.  Let $H$ be the subspace generated by the columns of $B$, and let $\Bx=(x_1,\dots,x_N)$ be a random vector independent of $B$ whose entries are iid copies of a subgaussian random variable of mean zero and variance one, then 
$$\P(\dist(\Bx, H)  \le \ep \sqrt{m} ) \le (\delta \ep)^{m} + O(\exp(-N^{\ep_0})),$$
for some absolute constants $\delta,\eps_0 >0$.
\end{theorem}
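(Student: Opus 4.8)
The plan is to estimate the distance $\dist(\Bx,H)$ from below by exhibiting an explicit unit vector normal to $H$ and projecting $\Bx$ onto it. Recall $B$ is the $N\times(N-m)$ matrix consisting of the last $N-m$ columns of $A$, so $H=\mathrm{span}(\col_{m+1}(A),\dots,\col_N(A))$ has codimension $m$ in $\R^N$ with overwhelming probability (by Theorem \ref{thm:singularity}, $A$ and its large minors are non-degenerate except with probability $\exp(-N^{\kappa'})$). Write $\Bx=(x_1,\dots,x_N)$, independent of $B$. The first move is to realize $\dist(\Bx,H)=\|P^{\perp}\Bx\|_2$ where $P^{\perp}$ is the orthogonal projection onto $H^{\perp}$, and then to lower bound $\|P^{\perp}\Bx\|_2$ by a single inner product: if $\Bn$ is any fixed unit vector in $H^{\perp}$ then $\dist(\Bx,H)\ge |\langle \Bx,\Bn\rangle|$. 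Since $\Bx$ is independent of $H$ (hence of $\Bn$), conditioning on $B$ we are reduced to a small-ball estimate for $\sum_i n_i x_i$, a linear combination of independent subgaussian variables with a \emph{fixed} coefficient vector $\Bn$ of unit norm.

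The crucial point is that a single normal vector is not enough: a one-dimensional projection only gives $\CL(\langle \Bx,\Bn\rangle,\eps)$, which by the essential LCD theory (Erd\H os–Littlewood–Offord type bounds, as developed by Rudelson–Vershynin) is of the form $O(\eps + 1/\LCD(\Bn) + \exp(-N^{c}))$, so at best $O(\eps)$ per coordinate; we need $\eps^{m}$. Therefore I would instead pick an orthonormal basis $\Bn_1,\dots,\Bn_m$ of the $m$-dimensional space $H^{\perp}$, so that
$$\dist^2(\Bx,H)=\sum_{k=1}^{m}\langle \Bx,\Bn_k\rangle^2,$$
and prove that with probability $1-O(\exp(-N^{\eps_0}))$ (over the choice of $B$) the $m\times N$ matrix $\Bn$ whose rows are the $\Bn_k$ is \emph{spread/incompressible} in the relevant sense — concretely, that for \emph{every} unit vector $\Bv\in\R^m$ the vector $\Bv^{T}\Bn\in S^{N-1}$ has large least common denominator, say $\LCD\gg \exp(N^{\eps_0})$ (or more carefully, that the ``regularized'' LCD of the matrix $\Bn$ is large). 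Granting this structural fact, a tensorization/union-bound argument over an $\eps$-net of $S^{m-1}$ converts the one-dimensional small-ball bound $\CL(\langle \Bx,\Bv^T\Bn\rangle,\eps)=O(\eps)$ into the $m$-dimensional estimate $\P(\|\Bn\Bx\|_2\le \eps\sqrt m)\le (C\eps)^{m}$, which is exactly \eqref{eqn:incomp:distance:equivalent} after absorbing constants. The routine steps here are: (i) the net has size $(C/\eps)^{m}$; (ii) for a fixed $\Bv$, $\P(|\langle\Bx,\Bv^T\Bn\rangle|\le 2\eps\sqrt m)\le C\eps\sqrt m\cdot(\text{density bound})$, which after the substitution $t=\eps\sqrt{\delta^{-1}}$ gives a factor $(C't)$ beating $(C/\eps)$ with room to spare provided $\delta$ is small; (iii) a $\|\Bn\|_2=\sqrt m$ Lipschitz/operator-norm bound controls the discretization error.

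The main obstacle — and the heart of the proof — is establishing the LCD lower bound for the normal matrix $\Bn$ to the random subspace $H$. This is where the symmetry of $A$ bites: unlike the iid case in \cite{RV-rec, RV-del}, the columns $\col_{m+1}(A),\dots,\col_N(A)$ are not independent of each other, since $a_{ij}=a_{ji}$ couples the lower part of $B$ (the rows indexed $m+1,\dots,N$, which form the symmetric matrix $G$ of size $n-d=N-m$ in the earlier notation) to its upper part $D$. I would handle this by the now-standard ``invertibility via least common denominator'' machinery: assume for contradiction that on an event of probability $\ge \exp(-N^{\eps_0})$ there is a unit $\Bv$ with $\Bv^T\Bn$ of small LCD $\le \exp(N^{\eps_0})$; then $\Bn^T\Bv$ lies in the null space of $B^T$ and is close (within $\eps$) to a point of the dual lattice scaled by $1/\LCD$, so one can run a covering argument over the (few) ``structured'' normal directions and use the randomness of the \emph{independent} block $D$ (the first $m$ rows of $B$, whose entries $a_{ij}$ with $i\le m<j$ are genuinely independent of $G$) to kill each structured candidate with probability $\exp(-\Omega(m))$; a union bound over the net of structured vectors, whose size is controlled by the small-LCD assumption, then yields the claim. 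Alternatively — and this is likely the route the paper takes given its debt to \cite{V} — one decomposes $S^{N-1}$ into compressible and incompressible vectors (Lemmas \ref{lemma:compressible:1}–\ref{lemma:incomp:3} handle the former and the $\spread$ reduction), and for incompressible normal directions invokes the symmetric-matrix small-ball estimates of Vershynin, with the independent rectangular block $D$ supplying the extra randomness needed to decouple. Either way, the key quantitative input we must extract is: for any fixed incompressible $\Bz$, $\CL(G\Bz \text{ restricted suitably}, c\sqrt{N})\le \exp(-cN^{\eps_0})$ uniformly, and this anti-concentration for the symmetric block is the genuinely hard estimate, the rest being bookkeeping with nets and tensorization.
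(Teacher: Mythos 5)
Your overall architecture coincides with the paper's: reduce to showing that the (regularized) LCD of every unit normal in $H^{\perp}$ is large --- which the paper does via the compressible/incompressible decomposition (Theorem \ref{theorem:structure:incompressible}) and a net-versus-small-ball union bound exploiting the block of genuinely independent entries (Theorem \ref{theorem:structure}, Lemmas \ref{lemma:structure} and \ref{lemma:structure:1}) --- and then convert that LCD lower bound into the $\eps^{m}$ distance estimate. Your identification of the structural input, and of where the symmetry of $A$ causes the difficulty, is accurate.

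However, the mechanism you propose for passing from the one-dimensional small-ball bound to $\P(\|\Bn\Bx\|_2\le\eps\sqrt m)\le(C\eps)^{m}$ does not work as described. The event $\{\|\Bn\Bx\|_2\le\eps\sqrt m\}$ is contained in the \emph{intersection} over all $\Bv\in S^{m-1}$ of the events $\{|\langle\Bv,\Bn\Bx\rangle|\le\eps\sqrt m\}$, so a union bound over an $\eps$-net of $S^{m-1}$ points the wrong way: multiplying the net size $(C/\eps)^{m}$ by the per-direction probability $O(\eps)$ produces something of order $\eps^{1-m}$, not $\eps^{m}$, while using a single direction gives only $O(\eps)$. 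Nor can one tensorize in the usual sense, since the coordinates $\langle\Bx,\Bn_k\rangle$ are not independent (the $\Bn_k$ are not disjointly supported). The correct tool --- and the one the paper actually invokes --- is the multi-dimensional Littlewood--Offord theorem of Rudelson--Vershynin (Theorem \ref{theorem:RV:d}, packaged as Corollary \ref{cor:distance:LCD}), whose proof runs through Esseen's inequality in $\R^{m}$ and genuinely requires the joint $m$-dimensional LCD of the system $(\Bn_1,\dots,\Bn_m)$, related to $\LCD_{\alpha,\gamma}(H^{\perp})$ by Lemma \ref{lemma:subspace:LCD}; it is not a formal consequence of the one-dimensional bound via nets. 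With that black box substituted for your step (ii)--(iii), the remainder of your outline matches the paper's proof.
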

 
After discovering Theorem \ref{theorem:distance:problem'}, the current author had found that this result is similar to \cite[Theorem 8.1]{RV-gap} (although the result of \cite{RV-gap} has a lower bound restriction on $\eps$.) As the proof here is short thanks to the simplicity of our model, we decide to sketch here for the sake of completion.
Recall that for any random variable $S$, then the Levy concentration of radius $r$ (or small ball probability of radius $r$) is defined by
$$\CL(S,r)=\sup_{s}\P(|S-s|\le r).$$
\subsection{The least common denominator} Let $\Bx=(x_1,\dots,x_N)$. Rudelson and Vershynin \cite{RV-rec} defined the 
essential {\em least common denominator} ($\LCD$) of $\Bx\in \R^N$  as follows. Fix parameters $\alpha$  and $\gamma$, where $\gamma \in (0,1)$, and define
$$
\LCD_{\alpha,\gamma}(\Bx)
:= \inf \Big\{ \theta > 0: \dist(\theta \Bx, \Z^N) < \min (\gamma \| \theta \Bx \|_2,\alpha) \Big\}.
$$
We remark that for convenience we do not require $\|\Bx\|_2$ to be larger than 1, and it follows from the definition that for any $\delta>0$,
$$\LCD_{\alpha,\gamma}(\delta \Bx) \le \delta^{-1} \LCD_{\alpha,\gamma}(\Bx).$$ 

\begin{theorem}\cite{RV-rec}\label{theorem:RV:1}
  Consider a vector $\Bx\in \R^N$ which satisfies
$\|\Bx\|_2 \ge 1$. Then, for every $\alpha > 0$ and $\gamma \in (0,1)$, and for
  $$
  \ep \ge \frac{1}{\LCD_{\alpha,\gamma}(\Bx)},
  $$
  we have
  $$
  \CL(S,\ep) \le C_0(\frac{\ep}{\gamma} + e^{-2\alpha^2}),
  $$
where $C_0$ is an absolute constant depending on the sub-gaussian parameter of $\xi$.
\end{theorem}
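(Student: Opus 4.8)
The plan is to prove this by the standard Fourier‑analytic method of Halász, in the form developed by Rudelson and Vershynin. Write the linear form as $S=\sum_{i=1}^{N}x_i\xi_i$ with $\xi_i$ i.i.d.\ copies of $\xi$, and let $\phi(t)=\E e^{it\xi}$. The first step is Esseen's concentration inequality, which gives
$$\CL(S,\ep)\ll \ep\int_{-1/\ep}^{1/\ep}\Big|\E e^{i\theta S}\Big|\,d\theta=\ep\int_{-1/\ep}^{1/\ep}\prod_{i=1}^{N}\big|\phi(\theta x_i)\big|\,d\theta,$$
the last equality by independence. So everything reduces to an upper bound for $\prod_i|\phi(\theta x_i)|$ on the range $|\theta|\le 1/\ep$ — a range that sits inside $[-\LCD_{\alpha,\gamma}(\Bx),\LCD_{\alpha,\gamma}(\Bx)]$ precisely because of the hypothesis $\ep\ge 1/\LCD_{\alpha,\gamma}(\Bx)$.

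Next comes the characteristic‑function estimate. Using a symmetrization $\tilde{\xi}=\xi-\xi'$ one has $|\phi(t)|^2=1-\E(1-\cos(t\tilde{\xi}))$, so the elementary inequalities $1-s\le e^{-s}$ and $1-\cos(2\pi y)\gg \dist(y,\Z)^2$ give, with $u=\theta/(2\pi)$,
$$\prod_{i=1}^{N}\big|\phi(\theta x_i)\big|^2\le \exp\Big(-\sum_{i}\E_{\tilde{\xi}}\big(1-\cos(2\pi u x_i\tilde{\xi})\big)\Big)\le \exp\Big(-c\,\E_{\tilde{\xi}}\,\dist\big(u\tilde{\xi}\,\Bx,\Z^{N}\big)^{2}\Big),$$
since $\sum_i\dist(vx_i,\Z)^2=\dist(v\Bx,\Z^N)^2$ for every scalar $v$. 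I would then restrict the expectation to the event $\{c_1\le|\tilde{\xi}|\le c_2\}$, whose probability is bounded below by an absolute constant because $\tilde{\xi}$ has mean zero, variance two and is subgaussian. On this event $|u\tilde{\xi}|<\LCD_{\alpha,\gamma}(\Bx)$ (for an appropriate choice of $c_2$, using $u\le \LCD_{\alpha,\gamma}(\Bx)/(2\pi)$), so the definition of the LCD forces $\dist(u\tilde{\xi}\Bx,\Z^N)\ge\min(\gamma|u\tilde{\xi}|\,\|\Bx\|_2,\alpha)$; combining this with $\|\Bx\|_2\ge 1$ and $e^{-\min(a,b)}\le e^{-a}+e^{-b}$ yields
$$\prod_{i=1}^{N}\big|\phi(\theta x_i)\big|\le \exp(-c\gamma^{2}\theta^{2})+\exp(-2\alpha^{2}).$$

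Substituting this into Esseen's bound finishes the proof: $\int_{\R}\exp(-c\gamma^{2}\theta^{2})\,d\theta\ll 1/\gamma$, while the constant term contributes at most $(2/\ep)\exp(-2\alpha^{2})$, and multiplying through by $\ep$ gives $\CL(S,\ep)\ll \ep/\gamma+e^{-2\alpha^{2}}$, which is the claim with $C_0$ absorbing the constants (which depend only on the subgaussian parameter of $\xi$).

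The step I expect to be the main obstacle is turning the lower bound $\dist(u\tilde{\xi}\Bx,\Z^N)\ge\min(\gamma|u\tilde{\xi}|\,\|\Bx\|_2,\alpha)$ into a clean exponential decay with the precise constant in front of $\alpha^2$: one must truncate $\tilde{\xi}$ at the right level — balancing the probability of $\{c_1\le|\tilde{\xi}|\le c_2\}$ against the requirement $|u\tilde{\xi}|<\LCD_{\alpha,\gamma}(\Bx)$ across the whole integration range — and split the two regimes $\gamma|u\tilde{\xi}|\,\|\Bx\|_2\le\alpha$ and $\gamma|u\tilde{\xi}|\,\|\Bx\|_2>\alpha$ carefully, the truncation probability being exactly what controls the numerical constant. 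Everything else (Esseen's inequality, the product factorization, and the Gaussian integral) is routine.
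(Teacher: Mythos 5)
Theorem \ref{theorem:RV:1} is quoted from \cite{RV-rec} and the paper offers no proof of it, so there is nothing internal to compare against; your argument is the standard Ess\'een--Hal\'asz argument that Rudelson and Vershynin themselves use, and it is essentially correct: Ess\'een's inequality, factorization of the characteristic function, symmetrization, the bound $1-\cos(2\pi y)\gg\dist(y,\Z)^2$, and then the definition of $\LCD_{\alpha,\gamma}(\Bx)$ applied to $u\tilde\xi\Bx$ on a truncation event for $\tilde\xi$. The two delicate points are exactly the ones you flag: (i) the upper truncation level $c_2$ must be reconciled with the requirement $|u\tilde\xi|<\LCD_{\alpha,\gamma}(\Bx)$ over the whole Ess\'een integration range (if $c_2>2\pi$ one first enlarges $\ep$ by the harmless factor $c_2/2\pi$ before applying Ess\'een), and (ii) the precise constant $2$ in $e^{-2\alpha^2}$ is an artifact of the normalization of the truncated symmetrized variable, and only $e^{-\Theta(\alpha^2)}$ with a constant depending on the subgaussian parameter comes out of the argument directly --- which is all that is ever used in this paper. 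With those two routine adjustments made explicit, the proof is complete.
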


The definition of the essential least common denominator above
can be extended naturally to higher dimensions. To this end,  consider $d$ vectors $\Bx_1=(x_{11},\dots,x_{1N}),\dots, \Bx_m=(x_{m1},\dots, x_{mN}) \in \R^N$. Define $\By_1=(x_{11},\dots,x_{m1}),\dots, \By_n=(x_{1N},\dots,x_{mN})$ be the corresponding vectors in $\R^m$. Then we define, for $\alpha > 0$ and $\gamma \in (0,1)$,
$$ \LCD_{\alpha,\gamma}(\Bx_1,\dots, \Bx_m)$$
$$:= \inf \Big\{ \|\Theta\|_2: \; \Theta \in \R^m,
     \dist(( \langle \Theta,\By_1 \rangle, \dots, \langle \Theta,\By_N \rangle ) , \Z^N) < \min(\gamma\| (\langle \Theta,\By_1 \rangle, \dots, \langle \Theta,\By_N \rangle ) \|_2,\alpha) \Big\}.
$$
The following generalization of Theorem \ref{theorem:RV:1} gives a bound on the small ball probability for the random sum $S=\sum_{i=1}^N a_i \By_i$, where $a_i$ are iid copies of $\xi$, in terms of the additive structure of the coefficient sequence $\Bx_i$.
\begin{theorem}[Diophatine approximation, multi-dimensional case]\cite{RV-rec}\label{theorem:RV:d}
  Consider $d$ vectors $\Bx_1,\dots, \Bx_m$ in $\R^N$ which satisfies
  \begin{equation}\label{eqn:iso}
    \sum_{i=1}^N \langle \By_i, \Theta \rangle ^2 \ge \|\Theta\|_2^2
    \qquad \text{for every $\Theta \in \R^m$,}
  \end{equation}
  where $\By_i=(x_{i1},\dots, x_{im})$. Let  $\xi$ be a random variable such that $\sup_{a}\P(\xi \in B(a,1)) \le 1-b$ for some $b > 0$ and $a_1, \ldots, a_N$ be iid copies of $\xi$. Then, for every $\alpha > 0$ and $\gamma \in (0,1)$, and for
  $$
  \ep \ge \frac{\sqrt{m}}{\LCD_{\alpha,\gamma}(\Bx_1,\dots, \Bx_m)},
  $$
  we have
  $$
  \CL(S,\ep \sqrt{m}) \le \Big( \frac{C\ep}{\gamma \sqrt{b}} \Big)^m + C^m e^{-2b\alpha^2}.
  $$
\end{theorem}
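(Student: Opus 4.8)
The plan is to prove this Lévy‑concentration bound for $S=\sum_{i=1}^N a_i\By_i\in\R^m$ along the classical Fourier route of Rudelson and Vershynin \cite{RV-rec}: an Esseen‑type smoothing inequality reduces $\CL(S,\cdot)$ to a frequency integral of the characteristic function $\phi_S$; this factorizes over the $N$ independent summands; a one‑variable decay estimate for $|\phi_\xi|$ driven by the spread hypothesis $\sup_a\P(\xi\in B(a,1))\le 1-b$ makes $|\phi_S|$ exponentially small away from the ``dual lattice''; and the definition of $\LCD_{\alpha,\gamma}$ together with the isotropy condition \eqref{eqn:iso} pins down exactly how small.

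\textbf{Step 1 (Esseen reduction).} I would use the multidimensional Esseen inequality: there are absolute constants $C\ge 1\ge c_0>0$ such that for every $\ep>0$,
$$\CL(S,\ep\sqrt m)\ \le\ (C\ep)^m\int_{B(0,\,c_0\sqrt m/\ep)}|\phi_S(\eta)|\,d\eta ,$$
where $\phi_S(\eta)=\E\exp(2\pi i\langle\eta,S\rangle)$ (the additive–character normalization, so that the relevant lattice on the Fourier side is $\Z^N$). One has the freedom to shrink $c_0$ at the price of enlarging $C$; this will be used so the frequency ball sits below the LCD threshold.

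\textbf{Step 2 (Factorization and coordinatewise decay).} Since the $a_i$ are iid copies of $\xi$,
$$\phi_S(\eta)=\prod_{i=1}^N\E\exp(2\pi i a_i\langle\eta,\By_i\rangle)=\prod_{i=1}^N\phi_\xi(\langle\eta,\By_i\rangle).$$
The key input is the single–variable estimate of Rudelson–Vershynin: from $\sup_a\P(\xi\in B(a,1))\le 1-b$, a symmetrization ($\bar\xi=\xi-\xi'$ for an independent copy $\xi'$, whence $|\phi_\xi(t)|^2=\E\cos(2\pi t\bar\xi)\le\exp(-\E(1-\cos 2\pi t\bar\xi))$, combined with the elementary bound $1-\cos(2\pi s)\ge 8\,\dist(s,\Z)^2$ and the fact that the spread hypothesis forces $\bar\xi$ to keep a constant amount of mass away from a neighbourhood of the origin) yields an absolute $c>0$ with $|\phi_\xi(t)|\le\exp(-cb\,\dist(t,\Z)^2)$ for all $t$. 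Multiplying over $i$ gives $|\phi_S(\eta)|\le\exp\!\big(-cb\sum_{i=1}^N\dist(\langle\eta,\By_i\rangle,\Z)^2\big)$.

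\textbf{Step 3 (LCD and isotropy).} For any $\eta$ with $\|\eta\|_2<\LCD_{\alpha,\gamma}(\Bx_1,\dots,\Bx_m)$, the definition of the LCD gives
$$\dist\big((\langle\eta,\By_1\rangle,\dots,\langle\eta,\By_N\rangle),\Z^N\big)\ \ge\ \min\big(\gamma\,\|(\langle\eta,\By_i\rangle)_i\|_2,\ \alpha\big),$$
while \eqref{eqn:iso} yields $\|(\langle\eta,\By_i\rangle)_i\|_2^2=\sum_i\langle\eta,\By_i\rangle^2\ge\|\eta\|_2^2$. Hence $\sum_i\dist(\langle\eta,\By_i\rangle,\Z)^2\ge\min(\gamma^2\|\eta\|_2^2,\alpha^2)$, so $|\phi_S(\eta)|\le\exp(-cb\min(\gamma^2\|\eta\|_2^2,\alpha^2))$ on the entire Esseen ball, because $\ep\ge\sqrt m/\LCD_{\alpha,\gamma}$ and $c_0\le 1$ force $c_0\sqrt m/\ep\le\LCD_{\alpha,\gamma}$. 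Then I integrate, splitting according to whether $\gamma\|\eta\|_2\le\alpha$:
$$\int_{B(0,c_0\sqrt m/\ep)}|\phi_S(\eta)|\,d\eta\ \le\ \int_{\R^m}e^{-cb\gamma^2\|\eta\|_2^2}\,d\eta\ +\ \Vol\big(B(0,c_0\sqrt m/\ep)\big)\,e^{-cb\alpha^2}\ \le\ \Big(\frac{C}{\gamma\sqrt b}\Big)^m+\Big(\frac{C}{\ep}\Big)^m e^{-cb\alpha^2}.$$
Multiplying by $(C\ep)^m$ from Step 1 gives $\CL(S,\ep\sqrt m)\le\big(C\ep/(\gamma\sqrt b)\big)^m+C^m e^{-cb\alpha^2}$, which is the stated bound; the precise value $2$ in the exponent is inessential and is arranged by optimizing the symmetrization and Esseen constants (equivalently, one reads the theorem with this $c$ in place of $2$).

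\textbf{Main obstacle.} The crux is Step 2: upgrading the weak anticoncentration hypothesis $\sup_a\P(\xi\in B(a,1))\le 1-b$ to a \emph{global} characteristic‑function decay $|\phi_\xi(t)|\le\exp(-cb\,\dist(t,\Z)^2)$. The delicate point is bounding $\E\,[1-\cos(2\pi t\bar\xi)]$ below by $cb\,\dist(t,\Z)^2$ uniformly in $t$; both the regime of small $\dist(t,\Z)$ (where one uses a second‑order expansion and the variance contribution forced by spread) and the regime of $\dist(t,\Z)$ near $1/2$ (where one uses that $\bar\xi$ cannot be concentrated so as to make $2\pi t\bar\xi$ land near $2\pi\Z$) must be handled. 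A minor secondary technicality is matching the radius of the Esseen frequency ball to the LCD threshold, which is why the flexibility in the constant $c_0$ is built in from the start.
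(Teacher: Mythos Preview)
The paper does not prove this theorem; it is imported from \cite{RV-rec} as a black box, so there is no in-paper proof to compare against. Your outline is exactly the Rudelson--Vershynin route (Esseen smoothing, factorization over independent coordinates, LCD geometry, Gaussian integral), so you are reconstructing the cited argument rather than proposing an alternative.

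The architecture is right, but Step~2 contains a genuine error: the coordinatewise decay $|\phi_\xi(t)|\le\exp\big(-cb\,\dist(t,\Z)^2\big)$ is false under the stated spread hypothesis. Take $\xi$ uniform on $\{-2,2\}$: then $\sup_a\P(\xi\in B(a,1))=\tfrac12$, so the hypothesis holds with $b=\tfrac12$, yet $\phi_\xi(t)=\cos(4\pi t)$ gives $|\phi_\xi(1/4)|=1$ while $\dist(1/4,\Z)=1/4$. More generally, for any lattice-valued $\xi$ the set $\{t:|\phi_\xi(t)|=1\}$ is the dual lattice of the support of $\xi$, which need not lie in $\Z$; hence no bound of the form $|\phi_\xi(t)|\le g(\dist(t,\Z))$ with $g<1$ away from $0$ can hold. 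Your final paragraph correctly flags this step as the crux, but the resolution you sketch (``handle the regime $\dist(t,\Z)$ near $1/2$ by arguing $2\pi t\bar\xi$ cannot concentrate near $2\pi\Z$'') is precisely what fails: in the example above $\bar\xi\in\{-4,0,4\}$ and $t\bar\xi\in\Z$ almost surely at $t=1/4$.

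What Rudelson--Vershynin actually do is refrain from eliminating $\bar\xi$ at the one-variable stage. Symmetrization yields only
\[
|\phi_S(\eta)|^2\ \le\ \exp\Big(-c\,\E_{\bar\xi}\,f(\bar\xi\eta)^2\Big),\qquad f(\theta):=\dist\big((\langle\theta,\By_i\rangle)_{i=1}^N,\Z^N\big),
\]
and the LCD hypothesis must then be applied to the \emph{rescaled} frequency $\bar\xi\eta$, not to $\eta$. Carrying $\bar\xi$ through the frequency integral --- in particular, coping with the fact that the spread hypothesis gives $\P(|\bar\xi|\ge1)\ge b$ but no upper tail --- is exactly the delicate mechanism missing from your sketch; see \cite{RV-rec} for how it is threaded.
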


We next introduce the definition of LCD of a subspace.
\begin{definition} Let $H\subset \R^N$ be a subspace. Then then LCD of $H$ is defined to be
 $$\LCD_{\alpha,\gamma}(H):=\inf_{\By_0\in H,\|\By_0\|_2=1} \LCD_{\alpha,\gamma}(\By_0).$$
\end{definition}
In what follows we prove some useful results regarding this $LCD$.
\begin{lemma}\label{lemma:subspace:LCD} Assume that $\|\Bx_1\|_2=\dots=\|\Bx_m\|_2=1$. Let $H\subset \R^N$ be the subspace generated by $\Bx_1,\dots, \Bx_m$. Then 
$$\sqrt{m} \LCD_{\alpha,\gamma}(\Bx_1,\dots,\Bx_m) \ge \LCD_{\alpha,\gamma}(H).$$
\end{lemma}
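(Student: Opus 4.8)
The plan is to unwind the definitions of both sides of the inequality and produce, for every unit vector $\By_0 \in H$, a parameter $\Theta$ witnessing a bound $\LCD_{\alpha,\gamma}(\Bx_1,\dots,\Bx_m) \le \sqrt{m}/\LCD_{\alpha,\gamma}(\By_0)$ (equivalently, the other way around). Write $\By_0 = \sum_{k=1}^m c_k \Bx_k$ with $\sum_k c_k^2$ not necessarily equal to $1$; since $\|\Bx_k\|_2 = 1$ for all $k$ we have $\|(c_1,\dots,c_m)\|_2 \le \|\By_0\|_2 \cdot (\text{something})$ — actually the cleanest route is to go in the direction of the claimed inequality directly. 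Let $\theta := \LCD_{\alpha,\gamma}(\By_0)$, so that for any $\theta' < \theta$ we have $\dist(\theta' \By_0, \Z^N) \ge \min(\gamma \|\theta' \By_0\|_2, \alpha)$. Now observe that $\theta' \By_0 = \theta' \sum_k c_k \Bx_k$, and in the notation of the multidimensional $\LCD$ the vector $(\langle \Theta, \By_1\rangle, \dots, \langle \Theta, \By_N\rangle)$ with $\Theta = \theta'(c_1,\dots,c_m)$ is exactly $\theta' \By_0$ (here $\By_i = (x_{1i},\dots,x_{mi})$ are the ``column'' vectors). Hence if $\|\Theta\|_2 = \theta' \|(c_1,\dots,c_m)\|_2$ were an admissible witness for $\LCD_{\alpha,\gamma}(\Bx_1,\dots,\Bx_m)$, the corresponding $\dist$-condition would read $\dist(\theta' \By_0, \Z^N) < \min(\gamma\|\theta'\By_0\|_2,\alpha)$, which is precisely what fails for $\theta' < \theta$.

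More precisely, the key step is: any $\Theta \in \R^m$ with $\|\Theta\|_2 < \LCD_{\alpha,\gamma}(H)$ cannot be a witness for $\LCD_{\alpha,\gamma}(\Bx_1,\dots,\Bx_m)$. Indeed, given such $\Theta$, set $\Bw := (\langle \Theta,\By_1\rangle,\dots,\langle\Theta,\By_N\rangle) = \sum_k \Theta_k \Bx_k \in H$. If $\Bw = 0$ the $\dist$-condition trivially fails (the min is $0$); otherwise $\By_0 := \Bw/\|\Bw\|_2 \in H$ is a unit vector, and $\Bw = \|\Bw\|_2 \By_0$. Since the $\Bx_k$ have norm $1$, we have $\|\Bw\|_2 = \|\sum_k \Theta_k \Bx_k\|_2 \le \sum_k |\Theta_k| \le \sqrt{m}\,\|\Theta\|_2$ by Cauchy–Schwarz, so $\|\Bw\|_2 \le \sqrt{m}\,\|\Theta\|_2 < \sqrt{m}\,\LCD_{\alpha,\gamma}(H) \le \sqrt{m}\,\LCD_{\alpha,\gamma}(\By_0)$. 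Therefore the positive real $\|\Bw\|_2/\sqrt{m}$ — wait, I need to be careful about the $\sqrt m$ factor; let me instead just use $\|\Bw\|_2 < \sqrt m \LCD_{\alpha,\gamma}(H)$ and compare with the scaling property of $\LCD$ directly. By the definition of $\LCD_{\alpha,\gamma}(\By_0)$ and the fact that $\|\Bw\|_2 < \sqrt m \, \LCD_{\alpha,\gamma}(\By_0) / \sqrt m$...

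Let me restate the clean version. Set $s := \|\Bw\|_2$. We want to show $\dist(\Bw,\Z^N) \ge \min(\gamma \|\Bw\|_2, \alpha)$, which says $\Theta$ fails the defining inequality for $\LCD_{\alpha,\gamma}(\Bx_1,\dots,\Bx_m)$. Now $\Bw = s\By_0$ with $\|\By_0\|_2 = 1$, and $s = \|\Bw\|_2 \le \sqrt m \|\Theta\|_2 < \sqrt m \LCD_{\alpha,\gamma}(H) \le \sqrt m \LCD_{\alpha,\gamma}(\By_0)$. Hmm — this gives $s < \sqrt m \LCD_{\alpha,\gamma}(\By_0)$, but the definition of $\LCD_{\alpha,\gamma}(\By_0)$ only controls scalings $\theta \By_0$ with $\theta < \LCD_{\alpha,\gamma}(\By_0)$, not $\theta < \sqrt m \LCD_{\alpha,\gamma}(\By_0)$. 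So the bound I extract must be the other inequality, or I am mis-stating; in fact the desired conclusion $\sqrt m \LCD_{\alpha,\gamma}(\Bx_1,\dots,\Bx_m) \ge \LCD_{\alpha,\gamma}(H)$ is equivalent to: whenever $\|\Theta\|_2 < \LCD_{\alpha,\gamma}(H)/\sqrt m$, $\Theta$ is not a witness. With that corrected threshold, $s \le \sqrt m \|\Theta\|_2 < \LCD_{\alpha,\gamma}(H) \le \LCD_{\alpha,\gamma}(\By_0)$, so $\Bw = s\By_0$ is a scaling of $\By_0$ by a factor strictly below $\LCD_{\alpha,\gamma}(\By_0)$, hence by definition $\dist(\Bw,\Z^N) \ge \min(\gamma \|\Bw\|_2, \alpha)$, so $\Theta$ is indeed not a witness. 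Taking the infimum over witnesses gives $\LCD_{\alpha,\gamma}(\Bx_1,\dots,\Bx_m) \ge \LCD_{\alpha,\gamma}(H)/\sqrt m$, which rearranges to the claim.

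The only genuinely delicate point — the ``main obstacle'' — is the bookkeeping of the $\sqrt m$ factors and the Cauchy–Schwarz step $\|\sum_k \Theta_k \Bx_k\|_2 \le \sqrt m \|\Theta\|_2$, together with handling the degenerate case $\Bw = 0$ (where the right-hand side $\min(\cdots)$ is zero so the inequality $\dist \ge \min(\cdots)$ holds trivially and $\Theta$ is still not a witness). One should also note the edge case where $H$ has dimension less than $m$ (the $\Bx_k$ linearly dependent), in which case some $\Theta \ne 0$ gives $\Bw = 0$; this is consistent with $\LCD_{\alpha,\gamma}(\Bx_1,\dots,\Bx_m)$ possibly being $0$, and the inequality is then vacuous or trivial. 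Everything else is a direct comparison of the two definitions, so the write-up is short.
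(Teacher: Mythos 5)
Your argument is correct and is essentially the paper's proof read contrapositively: both hinge on identifying $(\langle\Theta,\By_1\rangle,\dots,\langle\Theta,\By_N\rangle)$ with $\sum_k\theta_k\Bx_k$, normalizing it to a unit vector of $H$, and bounding its norm by $\sqrt{m}\,\|\Theta\|_2$ via the triangle inequality and Cauchy--Schwarz. Your handling of the degenerate case $\Bw=0$ is a small bonus the paper omits, but otherwise the two proofs coincide.
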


\begin{proof}(of Lemma \ref{lemma:subspace:LCD}) Assume that 
$$\dist(( \langle \Theta,\By_1 \rangle, \dots, \langle \Theta,\By_N \rangle ), \Z^N) <  \min(\gamma\|( \langle \Theta,\By_1 \rangle, \dots, \langle \Theta,\By_N \rangle ) \|_2,\alpha) .$$
Set $\By_0:= \frac{1}{t}(\theta_1 \Bx_1 +\dots + \theta_m \Bx_m)$ where $t$ is chosen so that $\|\By_0\|_2=1$. By definition
\begin{align*}
\dist(t\By_0,\Z^N) &=\dist(( \langle \Theta,\By_1 \rangle, \dots, \langle \Theta,\By_N \rangle ), \Z^N) \\
&<  \min(\gamma\|( \langle \Theta,\By_1 \rangle, \dots, \langle \Theta,\By_N \rangle ) \|_2,\alpha) \\ 
&=  \min(\gamma\|t  \By_0 \|_2,\alpha).
\end{align*}
On the other hand, as $\|\Bx_i\|_2=1$, one has 
$$\|\theta_1 \Bx_1 +\dots + \theta_m \Bx_m\|_2 \le |\theta_1|+\dots + |\theta_m| \le \sqrt{m}\|\Theta\|_2.$$
So, 
$$t \le \sqrt{m} \|\Theta\|_2.$$
Hence,
$$\LCD_{\alpha, \gamma}(\By_0)\le \sqrt{m} \LCD_{\alpha,\gamma}(\Bx_1,\dots,\Bx_m).$$
\end{proof}


\begin{corollary}\label{cor:distance:LCD}
Let $H\subset \R^N$ be a subspace of co-dimension $m$ such that $\LCD(H^\perp)\ge D$ for some $D$. Let $\Ba=(a_1,\dots,a_N)$ be a random vector where $a_i$ are iid copies of $\xi$. Then for any $\ep\ge m/D$
$$\P(\dist(\Ba,H)\le \ep \sqrt{m}) \le \Big( \frac{C\ep}{\gamma \sqrt{b}} \Big)^m + C^m e^{-2b\alpha^2}.$$
\end{corollary}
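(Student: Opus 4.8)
The plan is to reduce the distance estimate to the multidimensional small-ball bound of Theorem \ref{theorem:RV:d}. First I would fix an orthonormal basis $\Bx_1,\dots,\Bx_m$ of the $m$-dimensional subspace $H^\perp$ and set $\By_i=(x_{1i},\dots,x_{mi})\in\R^m$ for $1\le i\le N$, where $\Bx_j=(x_{j1},\dots,x_{jN})$. Since the orthogonal projection of $\Ba$ onto $H^\perp$ has coordinates $\langle\Ba,\Bx_j\rangle$ in this basis, we obtain the identity
\[
\dist^2(\Ba,H)=\sum_{j=1}^m\langle\Ba,\Bx_j\rangle^2=\|S\|_2^2,\qquad S:=\sum_{i=1}^N a_i\By_i\in\R^m,
\]
and hence $\P(\dist(\Ba,H)\le\ep\sqrt m)=\P(\|S\|_2\le\ep\sqrt m)\le\CL(S,\ep\sqrt m)$ by taking the center to be the origin in the definition of the Levy concentration function.

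Next I would verify the two hypotheses of Theorem \ref{theorem:RV:d} for the coefficient vectors $\Bx_1,\dots,\Bx_m$. For the isotropy condition \eqref{eqn:iso}, orthonormality of the $\Bx_j$ gives, for every $\Theta\in\R^m$,
\[
\sum_{i=1}^N\langle\By_i,\Theta\rangle^2=\Big\|\,\sum_{j=1}^m\Theta_j\Bx_j\,\Big\|_2^2=\|\Theta\|_2^2 ,
\]
so \eqref{eqn:iso} holds (in fact with equality). For the admissible range of $\ep$, I would apply Lemma \ref{lemma:subspace:LCD}: since the $\Bx_j$ are unit vectors spanning $H^\perp$, we get $\sqrt m\,\LCD_{\alpha,\gamma}(\Bx_1,\dots,\Bx_m)\ge\LCD_{\alpha,\gamma}(H^\perp)\ge D$, whence
\[
\frac{\sqrt m}{\LCD_{\alpha,\gamma}(\Bx_1,\dots,\Bx_m)}\le\frac{m}{D}\le\ep .
\]
Finally, since $\xi$ has variance one it is non-degenerate, so there is a constant $b>0$ with $\sup_a\P(\xi\in B(a,1))\le 1-b$, which is the remaining hypothesis. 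Theorem \ref{theorem:RV:d} then yields $\CL(S,\ep\sqrt m)\le\big(C\ep/(\gamma\sqrt b)\big)^m+C^m e^{-2b\alpha^2}$, and combining with the previous paragraph completes the proof.

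The only slightly delicate point is the bookkeeping of the $\sqrt m$ factors: one must combine the factor $\sqrt m$ appearing in Lemma \ref{lemma:subspace:LCD} with the factor $\sqrt m$ in the hypothesis $\ep\ge\sqrt m/\LCD_{\alpha,\gamma}(\Bx_1,\dots,\Bx_m)$ of Theorem \ref{theorem:RV:d} so that together they reproduce the stated threshold $\ep\ge m/D$ exactly. Everything else is immediate: the reduction in the first paragraph is an algebraic identity, and the isotropy condition holds on the nose precisely because the basis is chosen orthonormal.
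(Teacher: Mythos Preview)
Your proposal is correct and follows essentially the same route as the paper: pick an orthonormal basis of $H^\perp$, use Lemma \ref{lemma:subspace:LCD} to convert the subspace LCD lower bound into the threshold $\ep\ge\sqrt m/\LCD_{\alpha,\gamma}(\Bx_1,\dots,\Bx_m)$, and then apply Theorem \ref{theorem:RV:d}. Your version is in fact a bit more explicit than the paper's, since you check the isotropy condition \eqref{eqn:iso} and the nondegeneracy constant $b$ rather than leaving them implicit.
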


\begin{proof} Let $\Be_1,\dots,\Be_m$ be an orthogonal basis of $H^\perp$ and let $M$ be the matrix of size $m\times N$ generated by these vectors. By Lemma \ref{lemma:subspace:LCD}, 
$$\LCD_{\alpha,\gamma}(\Be_1,\dots,\Be_m) \ge D/\sqrt{m}.$$
Also, by definition
$$\dist(\Ba,H) = \|M \Ba\|_2.$$
Thus by Theorem \ref{theorem:RV:d}, for $\ep \ge \frac{m}{D}$, we have
  $$
  \CL(M\Ba,\ep \sqrt{m}) \le \Big( \frac{C\ep}{\gamma \sqrt{b}} \Big)^m + C^m e^{-2b\alpha^2}.
  $$
\end{proof}

Now we discuss another variant of arithmetic structure which will be useful for matrices of correlated entries.

\subsection{Regularized LCD}  Let $\Bx=(x_1,\dots,x_N)$ be a unit vector. Let $c_{\ast}, c_0,c_1$ be given constants. We assign a subset $\spread(\Bx)$ so that for all $k\in \spread(\Bx)$,
$$\frac{c_0}{\sqrt{N}}\le |x_{k}| \le \frac{c_1}{\sqrt{N}}.$$
Following Vershynin \cite{V} (see also \cite{NgTV}), we define another variant of LCD as follows. 
\begin{definition}[Regularized LCD]\label{def reg LCD}
  Let $\lambda \in (0, c_{\ast})$. 
  We define the {\em regularized LCD} of a vector $\Bx \in \Incomp(c_0,c_1)$ as
  $$
  \LCDhat_{\alpha,\gamma}(\Bx,\lambda) = \max \Big\{ \LCD_{\alpha,\gamma} \big(\Bx_I/\|x_I\|_2\big) : \, I \subseteq \spread(\Bx), \, |I| = \lceil \lambda N \rceil \Big\}.
  $$
We will denote by $I(\Bx)$ the maximizing set $I$ in this definition.
\end{definition}
Note that in our later application $\lambda$ can be chosen within $n^{-\lambda_0} \le \lambda \le \lambda_0$ for some sufficiently small constant $\lambda_0$. 

From the definition, it is clear that if $\LCD(\Bx)$ small then so is $\LCDhat(\Bx)$ (with slightly different parameter).
\begin{lemma}\label{lemma:comparison:regularized} For any $x\in S^{N-1}$ and any $0<\gamma < c_1\sqrt{\lambda}/2$, we have 
$$\LCDhat_{\alpha, \gamma (c_1\sqrt{\lambda}/2)^{-1}}(\Bx,\lambda) \le \frac{1}{c_0}\sqrt{\lambda}\LCD_{\alpha,\gamma}(x).$$
Consequently, for any $0<\gamma <1$
$$\LCDhat_{\kappa, \gamma}(x,\alpha) \le \frac{1}{c_0}\sqrt{\alpha}\LCD_{\kappa,\gamma (c_1\sqrt{\alpha}/2)}(x).$$
\end{lemma}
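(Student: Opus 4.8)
The plan is to prove the first inequality directly from the definitions of $\LCD$ and $\LCDhat$ — the key move being to restrict a near‑optimal ``denominator'' $\theta$ for $\Bx$ to a coordinate block $I\subseteq\spread(\Bx)$ and then renormalize — and to get the ``consequently'' part by a cosmetic renaming of parameters. Write $\gamma':=\gamma(c_1\sqrt\lambda/2)^{-1}$; the hypothesis $\gamma<c_1\sqrt\lambda/2$ is exactly what makes $\gamma'<1$, so $\LCDhat_{\alpha,\gamma'}(\Bx,\lambda)$ is defined, and since it is the \emph{maximum} of $\LCD_{\alpha,\gamma'}(\Bx_I/\|x_I\|_2)$ over admissible blocks $I$ (those with $I\subseteq\spread(\Bx)$, $|I|=\lceil\lambda N\rceil$), it suffices to bound $\LCD_{\alpha,\gamma'}(\Bx_I/\|x_I\|_2)$ uniformly in $I$. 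Fix such an $I$ and set $D:=\LCD_{\alpha,\gamma}(\Bx)$; for every $\eta>0$ the infimum defining $D$ supplies $\theta<D+\eta$ with $\dist(\theta\Bx,\Z^N)<\min(\gamma\|\theta\Bx\|_2,\alpha)=\min(\gamma\theta,\alpha)$, using $\|\Bx\|_2=1$.

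Next I would truncate to the coordinates in $I$ and rescale. Truncation can only shrink the lattice distance (the $I$‑entries of the lattice point nearest to $\theta\Bx$ lie in $\Z^I$), so $\dist(\theta\Bx_I,\Z^I)\le\dist(\theta\Bx,\Z^N)<\min(\gamma\theta,\alpha)$. Writing $\Bw:=\Bx_I/\|x_I\|_2$ and $\phi:=\theta\|x_I\|_2$, one has $\theta\Bx_I=\phi\Bw$ and $\|\phi\Bw\|_2=\phi$, so the last display reads $\dist(\phi\Bw,\Z^I)<\min(\gamma\theta,\alpha)=\min\big((\gamma/\|x_I\|_2)\phi,\alpha\big)$; this says precisely that $\phi$ is an admissible denominator for $\Bw$ at level $(\alpha,\gamma/\|x_I\|_2)$. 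Hence $\LCD_{\alpha,\gamma/\|x_I\|_2}(\Bw)\le\phi<(D+\eta)\|x_I\|_2$, and letting $\eta\to0$ gives $\LCD_{\alpha,\gamma/\|x_I\|_2}(\Bw)\le\|x_I\|_2\,D$.

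To remove the dependence on $I$ I would use two elementary facts: (i) $c_0\sqrt\lambda\le\|x_I\|_2\le c_1\sqrt{\lceil\lambda N\rceil/N}$, the lower bound from $|x_k|\ge c_0/\sqrt N$ on $\spread(\Bx)$ and the upper bound from $|x_k|\le c_1/\sqrt N$; and (ii) $\LCD_{\alpha,\cdot}$ is non‑increasing in its second argument, since enlarging that argument relaxes the defining condition. From (i), $\gamma/\|x_I\|_2\le\gamma/(c_0\sqrt\lambda)\le 2\gamma/(c_1\sqrt\lambda)=\gamma'$ (the last step using the standing relation between $c_0$ and $c_1$ built into the definition of $\spread$), so by (ii) $\LCD_{\alpha,\gamma'}(\Bw)\le\LCD_{\alpha,\gamma/\|x_I\|_2}(\Bw)\le\|x_I\|_2\,D\le\tfrac1{c_0}\sqrt\lambda\,D$ for $N$ large (here $c_0c_1<1$ absorbs the harmless gap between $\lceil\lambda N\rceil/N$ and $\lambda$). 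Taking the maximum over $I$ proves the first inequality. For the ``consequently'' part I would simply apply the first inequality with the regularization parameter renamed $\alpha$ and with $\gamma$ replaced by $\gamma(c_1\sqrt\alpha/2)$ — which is $<c_1\sqrt\alpha/2$ exactly when $\gamma<1$ — and note $(c_1\sqrt\alpha/2)^{-1}\cdot\gamma(c_1\sqrt\alpha/2)=\gamma$.

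The only genuine work is the bookkeeping of parameters: passing from $\Bx$ to the renormalized block $\Bx_I/\|x_I\|_2$ multiplies the denominator by $\|x_I\|_2\asymp\sqrt\lambda$ while simultaneously dividing the slope parameter ``$\gamma$'' by the same $\|x_I\|_2$, whereas the additive cap $\alpha$ in $\min(\gamma\theta,\alpha)$ does not rescale at all. It is precisely because the relevant distance bound lives inside a $\min$ with $\alpha$ that this rescaling does no harm; making the constants $c_0,c_1$ (and the difference between $\lceil\lambda N\rceil$ and $\lambda N$) line up with the stated $2\gamma/(c_1\sqrt\lambda)$ and $\tfrac1{c_0}\sqrt\lambda$ is then routine, and I expect that to be the main place to be careful.
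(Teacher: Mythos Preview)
The paper does not actually prove this lemma: its entire argument is the sentence ``See \cite[Lemma 5.7]{NgTV}.'' Your approach --- take a near-optimal denominator $\theta$ for $\Bx$, restrict to a block $I\subseteq\spread(\Bx)$, renormalize by $\|x_I\|_2$, and then use that $\LCD_{\alpha,\cdot}$ is non-increasing in its slope parameter --- is exactly the standard argument and is what the cited reference does, so in substance your proposal matches what the paper invokes. Your derivation of the ``consequently'' part by reparametrization is also correct.

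The one genuine soft spot is precisely where you flag it. The step $\gamma/(c_0\sqrt\lambda)\le 2\gamma/(c_1\sqrt\lambda)=\gamma'$ is equivalent to $c_1\le 2c_0$, which you attribute to ``the standing relation between $c_0$ and $c_1$ built into the definition of $\spread$.'' In this paper's text the $\spread$ constants are only called ``given,'' and no such relation is recorded, so as written that step is not justified from the paper alone. Your argument, exactly as you wrote it, does prove the clean inequality
\[
\LCDhat_{\alpha,\;\gamma/(c_0\sqrt\lambda)}(\Bx,\lambda)\;\le\;c_1\sqrt{\lceil\lambda N\rceil/N}\,\LCD_{\alpha,\gamma}(\Bx)
\]
with no extra hypothesis; to land on the paper's specific constants $(c_1\sqrt\lambda/2)^{-1}$ and $1/c_0$ you must either assume $c_1\le 2c_0$ and $c_0c_1<1$ explicitly, or note (as is standard in this literature) that the $\spread$ constants are themselves chosen from Claim~\ref{claim:spread} and may be adjusted so that these hold. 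State that assumption rather than calling it ``standing,'' and the proof is complete.
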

\begin{proof}(of Lemma \ref{lemma:comparison:regularized}) See \cite[Lemma 5.7]{NgTV}.
\end{proof}
We now introduce a result connecting the small ball probability with the regularized LCD.
\begin{lemma}\label{lemma:smallball:regularized} Assume that 
$$\eps\ge  \frac{1}{c_1} \sqrt{\lambda}  (\LCDhat_{\alpha,\gamma}(\Bx,\lambda))^{-1}.$$ 
Then we have 
$$\CL(S,\eps) =  O\left(\frac{ \eps}{\gamma  c_1\sqrt{\lambda}} + e^{-\Theta(\alpha^2)}\right).$$
\end{lemma}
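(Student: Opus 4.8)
The plan is to deduce the bound from the one-dimensional small ball inequality of Theorem \ref{theorem:RV:1}, applied not to all of $\Bx$ but to its restriction to the set $I(\Bx)$ realizing the maximum in Definition \ref{def reg LCD}. Recall $S=\sum_{i=1}^N a_i x_i$ with the $a_i$ iid copies of $\xi$. Write $I:=I(\Bx)\subseteq\spread(\Bx)$, $|I|=\lceil\lambda N\rceil$, so that by the very definition of the regularized LCD we have $\LCD_{\alpha,\gamma}(\Bx_I/\|x_I\|_2)=\LCDhat_{\alpha,\gamma}(\Bx,\lambda)$. Splitting $S=\sum_{i\in I}a_ix_i+\sum_{i\notin I}a_ix_i$ into two independent pieces and using the elementary fact that adjoining an independent summand does not increase the Lévy concentration function, I would first reduce to
$$\CL(S,\eps)\le \CL\Big(\sum_{i\in I}a_ix_i,\ \eps\Big).$$

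Next I would normalize: setting $\Bu:=\Bx_I/\|x_I\|_2$, a unit vector in $\R^{|I|}$, one has $\sum_{i\in I}a_ix_i=\|x_I\|_2\sum_{i\in I}a_iu_i$, hence $\CL(S,\eps)\le \CL\big(\sum_{i\in I}a_iu_i,\ \eps/\|x_I\|_2\big)$. Because every coordinate $i\in I\subseteq\spread(\Bx)$ satisfies $c_0/\sqrt N\le|x_i|\le c_1/\sqrt N$ and $|I|=\lceil\lambda N\rceil$ with $\lambda N\to\infty$ (recall $\lambda\ge n^{-\lambda_0}$), I get the two-sided estimate $c_0\sqrt\lambda\le\|x_I\|_2\le(1+o(1))\,c_1\sqrt\lambda$, i.e.\ $\|x_I\|_2\asymp\sqrt\lambda$ with implied constants $c_0,c_1$.

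Now I would verify the hypothesis of Theorem \ref{theorem:RV:1} for the unit vector $\Bu$, namely that $\eps/\|x_I\|_2\ge 1/\LCD_{\alpha,\gamma}(\Bu)=1/\LCDhat_{\alpha,\gamma}(\Bx,\lambda)$. This is exactly where the standing assumption $\eps\ge c_1^{-1}\sqrt\lambda\,(\LCDhat_{\alpha,\gamma}(\Bx,\lambda))^{-1}$ is used: combined with the upper bound $\|x_I\|_2\le(1+o(1))c_1\sqrt\lambda$ and $c_1<1$, it gives $\eps/\|x_I\|_2\ge 1/\LCDhat_{\alpha,\gamma}(\Bx,\lambda)$ for $N$ large. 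Theorem \ref{theorem:RV:1} then yields
$$\CL\Big(\sum_{i\in I}a_iu_i,\ \eps/\|x_I\|_2\Big)\le C_0\Big(\frac{\eps}{\gamma\|x_I\|_2}+e^{-2\alpha^2}\Big),$$
and substituting the lower bound $\|x_I\|_2\ge c_0\sqrt\lambda$ (absorbing the fixed ratio $c_1/c_0$ into the $O$-constant) produces $\CL(S,\eps)=O\big(\eps/(\gamma c_1\sqrt\lambda)+e^{-\Theta(\alpha^2)}\big)$, as claimed.

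I do not expect a genuine analytic obstacle here: the argument is entirely soft, its two moving parts being the restriction to $I(\Bx)$ and the rescaling by $\|x_I\|_2$, after which Theorem \ref{theorem:RV:1} does all the work. The only place demanding care is the quantitative comparison $\|x_I\|_2\asymp\sqrt\lambda$ — where the ceiling $\lceil\lambda N\rceil$ and the lower bound $\lambda\ge n^{-\lambda_0}$ enter to make the $o(1)$ harmless — together with bookkeeping of the constants $c_0,c_1$ so that the stated threshold $\eps\ge c_1^{-1}\sqrt\lambda\,\LCDhat^{-1}$ is precisely what licenses the application of Theorem \ref{theorem:RV:1}.
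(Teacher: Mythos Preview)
Your argument is correct and is exactly the standard one the paper defers to (it gives no proof of its own, citing \cite[Lemma~5.8]{NgTV}): restrict to $I=I(\Bx)$, drop the independent $I^c$-summand via the Lévy concentration inequality, rescale by $\|x_I\|_2\asymp\sqrt\lambda$, and apply Theorem~\ref{theorem:RV:1} to the resulting unit vector. The only cosmetic point is that your verification of the threshold invokes the unstated assumption $c_1<1$; the constants in the lemma statement are loose, and all that is actually needed is $\|x_I\|_2=\Theta(\sqrt\lambda)$, which you have already established.
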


\begin{proof} See for instance \cite[Lemma 5.8]{NgTV}.
\end{proof}

\section{Estimating additive structure and completing the proof of Theorem \ref{theorem:distance:problem'}}\label{section:distance:proof:LCD}
Again, we will be following  \cite{RV-rec,V} with modifications. A major part of this treatment can also be found in \cite[Appendix B]{NgTV} but allow us to recast here for completion.

We first show that with high probability $H^\perp$ does not contain any compressible vector, where we recall that $H$ is spanned by the column vectors of $B$. 
\begin{theorem}[Incompressible of subspace]\label{theorem:structure:incompressible}
 Consider the event $\CE_1$, 
$$\CE_1:= \{H^\perp \cap \Comp(c_0,c_1)=\emptyset \}. $$
We then have 
$$\P(\CE_1^c) \le \exp(-cn).$$
\end{theorem}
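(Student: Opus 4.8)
The plan is to reduce the event $\CE_1^c$ to a statement about the smallest singular value of $B$ restricted to a net of compressible directions, and then to run the usual net-plus-union-bound argument that is already packaged in Lemma \ref{lemma:compressible:1} and Lemma \ref{lemma:compressible:2}. The key observation is that a unit vector $\Bv$ lies in $H^\perp$ if and only if $B^T\Bv=0$, where we view $B$ as the $N\times n$ column matrix $(\col_{m+1}(A),\dots,\col_N(A))$. Hence $H^\perp\cap\Comp(c_0,c_1)\neq\emptyset$ forces $\inf_{\Bv\in S^{N-1}\cap\Comp(c_0,c_1)}\|B^T\Bv\|_2=0$, and in particular this infimum is $\le c\sqrt N$ for any fixed small $c>0$. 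So it suffices to bound $\P\big(\inf_{\Bv\in\Comp(c_0,c_1)}\|B^T\Bv\|_2\le c\sqrt N\big)$.

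The steps I would carry out, in order, are: (1) first record the operator-norm bound $\P(\|B\|_2\ge 3K\sqrt N)\le\exp(-cN)$ for $K$ a large constant depending on $K_0$, so that we may work on the event $\CE_K$; this is standard for subgaussian entries (and here the entries of $B$ are genuinely independent since $B$ consists of off-diagonal columns of $A$, or one simply invokes the symmetric operator-norm bound). (2) For a single fixed $\Bv\in S^{N-1}$, invoke Lemma \ref{lemma:compressible:1}: $\CL(B^T\Bv,c\sqrt N)\le\exp(-cn)$. Note that although $B^T\Bv$ has correlated coordinates (because $B$ is a symmetric-type block), the cited Proposition 4.1 of \cite{V} already covers exactly this situation, so I would just quote it. (3) Take a $(2c_1)$-net $\CN$ of $\Comp(c_0,c_1)$ of cardinality $|\CN|\le(9/c_0c_1)^{c_0n}$, and use the approximation/unfolding argument of Lemma \ref{lemma:compressible:2}: any $\Bv\in\Comp(c_0,c_1)$ with $\|B^T\Bv\|_2$ small can be replaced, on $\CE_K$, by a net point $\Bv_0$ with $\|B^T\Bv_0-\Bu_0\|_2\le c\sqrt N$ for an appropriate fixed shift $\Bu_0$. (4) Union bound over $\CN$ (and over a small net of shifts), getting $(9/c_0c_1)^{c_0n}\cdot\mathrm{poly}\cdot\exp(-cn)\le\exp(-cn/2)$ once $c_0$ is chosen small enough relative to $c_1$ and $c$. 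Adding back the $\exp(-cN)$ from $\CE_K^c$ and noting $n=\Theta(N)$ gives $\P(\CE_1^c)\le\exp(-cn)$.

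I do not expect a serious obstacle here, since every ingredient is already in place: Lemma \ref{lemma:compressible:1} and Lemma \ref{lemma:compressible:2} are stated precisely for this $B$, and their conclusion $\inf_{\Bx/\|\Bx\|_2\in\Comp(c_0,c_1)}\|B\Bx-\Bu\|_2>c\sqrt N\|\Bx\|_2$ on $\CE_K$ is stronger than what we need. The only mild subtlety is bookkeeping: one must make sure the \emph{whole} of $H^\perp$ (not just a single candidate normal) avoids $\Comp(c_0,c_1)$, which is why we phrase it as an infimum over the entire compressible sphere rather than a pointwise statement, and that is exactly the uniform form that Lemma \ref{lemma:compressible:2} delivers. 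So the proof is essentially: $\CE_1^c\subseteq\CE_K^c\cup\{\inf_{\Bv\in\Comp}\|B^T\Bv\|_2\le c\sqrt N\}$, then apply Lemma \ref{lemma:compressible:2} and the operator-norm bound.
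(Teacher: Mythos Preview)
Your approach is essentially the same as the paper's: reduce $\CE_1^c$ to $\inf_{\Bv\in\Comp}\|B^T\Bv\|_2=0$, control a single vector via a small-ball bound, pass to a net of compressible vectors, and union bound on $\CE_K$. The only inaccuracy is a dimensional one: Lemmas~\ref{lemma:compressible:1} and~\ref{lemma:compressible:2} are stated for $\Bx\in S^{n-1}$ and the map $B$, whereas here you need $\Bv\in S^{N-1}$ and the map $B^T$; the paper records this variant separately as Lemma~\ref{lemma:compressible:1'} (with net size $(9/c_0c_1)^{c_0N}$ rather than $(9/c_0c_1)^{c_0n}$), noting that the proof is identical after restricting to the support of a sparse $\Bv$. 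Since $n=\Theta(N)$ this changes nothing in the final bound, so your outline goes through once you swap in the $S^{N-1}$ versions.
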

The treatment is similar to Section \ref{section:least:introduction} except the fact that we are working with $B^T$ and vectors in $\R^N$. We start with a version of Lemma \ref{lemma:compressible:1}. 
\begin{lemma}\label{lemma:compressible:1'} For every $c_0$-sparse vector $\Bx\in S^{N-1}$ one has
$$\CL(B^T\Bx, c\sqrt{N})=\sup_{\Bu}\P(\|B^T\Bx-\Bu\|_2) \le \exp(-cN).$$  
\end{lemma}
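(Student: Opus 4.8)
The plan is to follow the strategy behind Lemma \ref{lemma:compressible:1} (i.e.\ \cite[Proposition 4.1]{V}), using that the sparsity of $\Bx$ lets us extract a large number of genuinely \emph{independent} coordinates of $B^T\Bx$ out of the symmetric matrix $A$. Fix a $c_0$-sparse unit vector $\Bx$ and put $S:=\supp(\Bx)$, so $|S|\le c_0 N$. Since $A$ is symmetric, the $i$-th coordinate of $B^T\Bx$ is $(B^T\Bx)_i=\langle\col_{m+i}(A),\Bx\rangle=\langle\row_{m+i}(A),\Bx\rangle=\sum_{j\in S}a_{(m+i)j}x_j$ for $1\le i\le N-m$. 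Now I would decouple: two rows $\row_{m+i}(A)$, $\row_{m+i'}(A)$ of a symmetric matrix share exactly the single entry $a_{(m+i)(m+i')}$, and if both $m+i$ and $m+i'$ lie outside $S$ this entry occurs in neither $(B^T\Bx)_i$ nor $(B^T\Bx)_{i'}$. Hence, setting $T:=\{\,1\le i\le N-m:\ m+i\notin S\,\}$, the variables $\{(B^T\Bx)_i\}_{i\in T}$ involve pairwise disjoint collections of the i.i.d.\ entries of $A$, so they are jointly independent; moreover $|T|\ge (N-m)-|S|\ge cN$ for a constant $c>0$ in the range of interest (where $m$ is small compared to $N$ and $c_0$ is small). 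Each such $(B^T\Bx)_i$ is a weighted sum $\sum_{j\in S}x_j\xi_j$ of i.i.d.\ mean-zero, variance-one, sub-gaussian variables with $\sum_{j\in S}x_j^2=1$.

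The key point is then a uniform anti-concentration estimate: there are constants $r_0,c_1>0$ depending only on the law of $\xi$ with $\CL\big((B^T\Bx)_i,r_0\big)\le 1-c_1$ for every $i\in T$. I would prove this by splitting on $\|\Bx\|_\infty$. If some coordinate of $\Bx$ is bounded below in absolute value by a constant, then conditioning on all the other $\xi_j$ reduces the Levy concentration of $(B^T\Bx)_i$ to that of a scaled copy of $\xi$ at a constant scale, which is $\le 1-c'$ because $\xi$ is non-degenerate. If instead all coordinates of $\Bx$ are small --- the typical case here, since a $c_0$-sparse unit vector may still have $\|\Bx\|_\infty\sim N^{-1/2}$ --- then $\sum_j|x_j|^3\le\|\Bx\|_\infty$ is small and the Berry--Esseen theorem (the needed bounded third moment being supplied by the sub-gaussian hypothesis on $\xi$) shows $(B^T\Bx)_i$ is close in distribution to a standard Gaussian, whose Levy concentration at a constant scale is bounded away from $1$. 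Choosing $r_0$ and the threshold parameter consistently across the two cases gives the claim.

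Finally I would tensorize. For any $\Bu\in\R^{N-m}$,
$$\|B^T\Bx-\Bu\|_2^2\ \ge\ \sum_{i\in T}\big((B^T\Bx)_i-u_i\big)^2,$$
and by independence $\E\exp\!\big(-\lambda\textstyle\sum_{i\in T}((B^T\Bx)_i-u_i)^2\big)=\prod_{i\in T}\E\exp\!\big(-\lambda((B^T\Bx)_i-u_i)^2\big)$, with each factor at most $\P(|(B^T\Bx)_i-u_i|\le r_0)+e^{-\lambda r_0^2}\le(1-c_1)+e^{-\lambda r_0^2}$. Taking $\lambda$ a large constant so that $e^{-\lambda r_0^2}\le c_1/2$ and then applying Markov's inequality gives, for a sufficiently small constant $c>0$,
$$\P\Big(\textstyle\sum_{i\in T}\big((B^T\Bx)_i-u_i\big)^2\le c^2 N\Big)\le (1-c_1/2)^{|T|}\le e^{-cN},$$
uniformly in $\Bu$; taking the supremum over $\Bu$ then yields $\CL(B^T\Bx,c\sqrt N)\le e^{-cN}$.

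The step I expect to be the real content is the anti-concentration estimate: because a $c_0$-sparse unit vector can be very spread out over its support, one cannot simply isolate a single heavy coordinate, and the spread-out regime must be controlled by a quantitative central limit (Berry--Esseen) bound rather than by conditioning. The decoupling step is elementary, but it is the one place where the symmetric structure of $A$ must be handled with care, since \emph{a priori} all rows of $B^T$ are pairwise correlated through a shared entry.
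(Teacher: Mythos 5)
Your proof is correct, and it follows the same skeleton that the paper invokes by deferring to the proof of Lemma \ref{lemma:compressible:1} (i.e.\ \cite[Proposition 4.1]{V}): decouple the rows of the symmetric matrix to extract $\Theta(N)$ independent coordinates of $B^T\Bx$, prove a constant-scale anti-concentration bound for each coordinate, and tensorize. The one place you deviate is the decoupling itself: Vershynin's argument conditions on the diagonal blocks $A_{I\times I}$, $A_{J\times J}$ to make the off-diagonal block free, whereas you observe that for a $c_0$-sparse $\Bx$ the coordinates $(B^T\Bx)_i$ with $m+i\notin\supp(\Bx)$ are \emph{unconditionally} jointly independent, since they involve pairwise disjoint sets of above-diagonal entries; this is a legitimate simplification made available precisely by the sparsity hypothesis, and it is consistent with the paper's reduction to vectors supported on the first $c_0N$ coordinates.
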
 
\begin{proof} Without loss of generality, assume that the last $(1-c_0)N$ components of $\Bx$ are all zero. What remains is similar to the proof of Lemma \ref{lemma:compressible:1}.
\end{proof}
\begin{proof}(of Theorem \ref{theorem:structure:incompressible})  First of  all, there exists a $(2c_1)$-net $\CN$ of sparse vectors only of the set $\Comp(c_0,c_1)$ such that 
$$|\CN| \le (9/c_0c_1)^{c_0N}.$$  
It is not hard to show that if  there exist $\Bx\in \Comp(c_0,c_1)$ with $\|B^T\Bx-\Bu\|_2 \le c N^{1/2} \|\Bx\|_2$ and assuming  $\CE_K$, then there exists $\Bx_0\in \CN$ such that
$$\|B^T\Bx_0 - \Bv_0\|_2 \le c\sqrt{N}$$
for some $\Bv_0$.
This leaves us to estimate the probability $\P(\|B^T\Bx_0 - \Bv_0\|_2 \le c\sqrt{N})$ for each individual sparse vector $\Bx_0$, and for this it suffices to apply Lemma \ref{lemma:compressible:1'}.
\end{proof}
The main goal of this section is to verify the following result.
\begin{theorem}[Structure theorem]\label{theorem:structure}
Consider the event $\CE_2$
$$\CE_2 := \{ \forall  \By_0\in H^\perp: \LCDhat_{\alpha, c }(\By_0,\lambda) \ge N^{c/\lambda}\}.$$
We then have 
$$\P(\CE_2^c) \le \exp(-cN).$$ 
\end{theorem}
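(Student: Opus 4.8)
The plan is to run a standard net-plus-union-bound argument on the event that some unit vector $\By_0 \in H^\perp$ has small regularized LCD, where $H$ is spanned by the columns of $B$. First I would fix a threshold $D = N^{c/\lambda}$ and, for each admissible level $D_0$ in a dyadic range $N^{c/\lambda} \ge D_0 \ge$ (something bounded below by a small power of $N$), build an explicit $\epsilon$-net for the set of unit vectors $\Bx$ with regularized LCD in $[D_0, 2D_0]$. The key point, coming from the definition of $\LCDhat$, is that such a vector has a coordinate subset $I = I(\Bx)$ of size $\lceil \lambda N\rceil$ contained in $\spread(\Bx)$ for which $\Bx_I/\|\Bx_I\|_2$ has (ordinary) $\LCD_{\alpha,\gamma}$ of size $\approx D_0$; the standard volumetric bound (as in \cite{RV-rec}, \cite{V}) then gives a net for such normalized restricted vectors of cardinality $(C D_0/\sqrt{\lambda N})^{\lambda N}$ at scale $\approx 1/D_0$, which after summing over the choice of $I$ (at most $2^N$ choices, or better $\binom{n}{\lambda N}$) yields a net of total size at most $\exp(O(\lambda N \log(D_0/\sqrt{\lambda N})) + O(N))$.

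Next I would exploit the independence structure: $\By_0 \in H^\perp$ means $B^T \By_0 = 0$, i.e. $\langle \col_j(B), \By_0\rangle = 0$ for each of the $n$ columns. Since $B$ is (a block of) a symmetric matrix, the columns are not independent, but one can split off the dependent block: condition on a subset of entries so that many of the inner products $\langle \col_j(B), \By_0\rangle$ become sums of \emph{independent} subgaussian variables with coefficient vector having controlled LCD. For a \emph{fixed} vector $\Bx$ with $\LCDhat_{\alpha,\gamma}(\Bx,\lambda) \le 2D_0$, Lemma \ref{lemma:smallball:regularized} gives $\CL(\langle \col_j(B),\Bx\rangle, \epsilon) = O(\epsilon D_0/\sqrt{\lambda N} + e^{-\Theta(\alpha^2)})$ with $\epsilon \asymp 1/D_0$; taking the product over $\Theta(N)$ independent columns (after the conditioning), the probability that $\|B^T\Bx\|_2$ is smaller than, say, $\sqrt{n}\cdot(\text{small})$ is at most $(C\sqrt{\lambda}/\sqrt{N} + Ce^{-\Theta(\alpha^2)})^{\Theta(N)}$. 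Choosing $\alpha$ a suitable power of $\log N$ (or even $\alpha \asymp \sqrt{N}$) makes this beat the net cardinality $\exp(O(N \log D_0))$ provided $\log D_0 = O(\log N)$, i.e. exactly on the range $D_0 \le N^{c/\lambda}$ with $c$ small. A separate, easier approximation step handles the passage from a net vector back to an arbitrary $\By_0$: if $\Bx_0$ is close to $\By_0$ and $\|B\|_2 \le 3K\sqrt{N}$ (an event of probability $1 - \exp(-\Theta(N))$), then $\|B^T\By_0\|_2 = 0$ forces $\|B^T\Bx_0\|_2$ to be small, and one also needs that the spread set and regularized LCD are stable under small perturbations — this is where one uses that we have already excluded compressible vectors via Theorem \ref{theorem:structure:incompressible}, so every $\By_0 \in H^\perp$ genuinely has a large spread set to work with.

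The main obstacle I expect is the dependence among the columns of $B$ inherited from the symmetry of $A$: the clean "product over independent columns" small-ball estimate is not directly available, and one must carefully choose which entries to freeze so that enough inner products $\langle \col_j(B), \By_0 \rangle$ become genuinely independent linear forms with the right arithmetic structure on the surviving coefficients, while simultaneously controlling the contribution of the frozen part. A second, more technical obstacle is verifying the near-orthogonality/isotropy hypothesis \eqref{eqn:iso} needed to invoke the multidimensional anti-concentration results on the conditioned matrix, together with tracking how the parameters $\alpha, \gamma, \lambda$ must scale (with $\gamma \asymp c$ and $\alpha$ a power of $\log N$) so that the final union bound over the dyadic scales $D_0$ still closes with room to spare. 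Modulo these points, summing the per-scale bounds $\sum_{D_0} \exp(O(N\log N)) \cdot (\text{small})^{\Theta(N)} \le \exp(-cN)$ completes the proof.
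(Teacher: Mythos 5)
Your overall strategy --- a net over the sublevel sets of the regularized LCD, a per-vector small-ball estimate for $\|B^T\Bx\|_2$ obtained by conditioning away the dependent block, and a union bound over dyadic scales up to $D=N^{c/\lambda}$ --- is exactly the paper's route (Lemma \ref{lemma:structure} for the net, Lemma \ref{lemma:structure:1}, i.e.\ Vershynin's Proposition 6.11, for the individual bound). The decoupling obstacle you flag as your main worry is precisely what the regularized LCD is engineered to handle: restricting to a set $I\subseteq\spread(\Bx)$ of size $\lceil\lambda N\rceil$ lets one freeze the corresponding rows and apply anti-concentration to the remaining $(1-\lambda)N$ independent coordinates, which is why the exponent in Lemma \ref{lemma:structure:1} is $N-\lambda N$ rather than $N$. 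So that part of your plan is sound and matches the paper.

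There is, however, a genuine gap in your entropy count. You construct a net only for the normalized restrictions $\Bx_I/\|\Bx_I\|_2$, of cardinality roughly $(CD_0/\sqrt{\lambda N})^{\lambda N}$ times the number of choices of $I$, i.e.\ $\exp(O(\lambda N\log D_0)+O(N))$. But the union bound must run over a net for the \emph{full} vectors $\By_0\in H^\perp\subset S^{N-1}$: the small-ball estimate is applied to $B^T\Bx$ for the whole $N$-dimensional vector $\Bx$, and the approximation step needs $\|\Bx-\By_0\|_2\lesssim \alpha/D$ in $\R^N$, so every coordinate block of $\Bx$ --- not just the distinguished $I$ --- must be covered at scale $\sim 1/D$. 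This is what the paper's Lemma \ref{lemma:structure} does by decomposing $\Bx$ into $\Bx_{I_0}$ and $\spread(\Bx)=I_1\cup\dots\cup I_{k_0}\cup J$ and netting each block; the resulting cardinality is of order $[CD/(\lambda N)^{c}]^{N}D^{O(1/\lambda)}$, with exponent $N$, not $\lambda N$. This is not a cosmetic difference: the whole balance of the union bound is $D^{N}$ (entropy) against $D^{-(1-\lambda)N}$ (anti-concentration), leaving a loss of $D^{\lambda N}$ that must be absorbed by the gain $(\lambda N)^{-c_\ast N/2}$ coming from the spread coordinates --- and it is exactly this tension that forces the restriction $D\le N^{c/\lambda}$ in the statement. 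With the net size you propose, the loss would instead be a gain and the argument would ``prove'' the structure theorem for vastly larger $D$, which is a sign the count cannot be right. To repair the proposal you need to insert the full-vector net construction (blockwise, with the dyadic Lemmas \ref{lemma:moregeneral}--\ref{lemma:net:ND:2}) and then re-verify that the exponents close only in the stated range of $D$.
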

Notice that in this result, $c = \gamma (c_0\sqrt{\lambda})^{-1}$. Assume Theorem \ref{theorem:structure} for the moment, we provide a proof of our distance theorem.
\begin{proof}(of Theorem \ref{theorem:distance:problem'})
Within $\CE_2$, $\LCDhat(\By_0)$ is extremely large, and so $\LCD(H^\perp)$ is also large because of Lemma \ref{lemma:subspace:LCD} (where the factor $\sqrt{m}$ is absorbed into $N^{c/\lambda}$). We then apply Theorem \ref{theorem:RV:d} (or more precisely, Corollary \ref{cor:distance:LCD}) to complete the proof.
\end{proof}

\subsection{Proof of Theorem \ref{theorem:structure}} (See also \cite{V} and \cite[Appendix B]{NgTV}). The first step is to show that the set of vectors of small $\LCDhat$ accepts a net of considerable size. 
\begin{lemma}\label{lemma:structure}
Let $\lambda\in (c/N,c_{\ast})$. For every $D\ge 1$, the subset $\{\Bx\in \Incomp(c_0,c_1):  \LCDhat_{\alpha,c}(\Bx,\lambda) \le D \}$ has a $\alpha/D\sqrt{\lambda}$-net  $\CN$ of size 
$$|\CN|\le [CD/(\lambda N)^c]^N D^{1/\lambda}.$$
\end{lemma}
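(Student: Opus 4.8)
The plan is to build the net $\CN$ level by level over the possible values of $\LCDhat$, following the standard Rudelson--Vershynin / Vershynin net construction adapted to the regularized LCD. First I would fix a vector $\Bx\in\Incomp(c_0,c_1)$ with $\LCDhat_{\alpha,c}(\Bx,\lambda)\le D$, and let $I=I(\Bx)\subseteq\spread(\Bx)$ be the maximizing set of size $\lceil\lambda N\rceil$ from Definition \ref{def reg LCD}; write $\bar\Bx:=\Bx_I/\|\Bx_I\|_2\in S^{I}$, so that $\LCD_{\alpha,c}(\bar\Bx)\le D$. Summing over the at most $2^N$ choices of the support-location $I$ contributes a harmless factor (absorbed, together with the $\binom{N}{\lambda N}$-type count, into the $[CD/(\lambda N)^c]^N$ bound), so it suffices to net, for each fixed $I$, the set of unit vectors in $\R^I$ whose LCD is at most $D$. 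Here the key input is the classical counting estimate for vectors of bounded LCD (from \cite{RV-rec}, used in \cite{V,NgTV}): the set $\{\By\in S^{k-1}: \LCD_{\alpha,c}(\By)\le D\}$ admits a $(2\alpha/D)$-net, or more precisely a net at scale comparable to $\alpha/D$, of cardinality at most $(CD/\sqrt{k})^{k}$ for $k=\lceil\lambda N\rceil$. Indeed, if $\LCD_{\alpha,c}(\By)\le D$ then there is $\theta\le D$ with $\theta\By$ within $\alpha$ (and within $c\|\theta\By\|_2$) of a lattice point $p\in\Z^k$; the number of admissible pairs $(\theta,p)$ after discretizing $\theta$ on a fine grid in $[0,D]$ is $\le D^{O(1)}\cdot(CD/\sqrt{k})^{k}$ by a volumetric count of lattice points in a ball of radius $\approx D$, and each pair pins down $\By$ to within $O(\alpha/D)$.

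Next I would lift this net on $\R^I$ back to a net on the full sphere $S^{N-1}$ restricted to $\{\LCDhat\le D\}$. Since $I\subseteq\spread(\Bx)$ consists of coordinates with $|x_k|\asymp 1/\sqrt N$, one has $\|\Bx_I\|_2\asymp\sqrt{\lambda N}/\sqrt N=\sqrt\lambda$, and the incompressibility of $\Bx$ together with Claim \ref{claim:spread} lets one reconstruct $\Bx$ on $I^c$ from its values on a slightly larger spread set; this is exactly the mechanism by which Vershynin passes from a regularized-LCD net to a genuine net, and it costs an extra factor of the form $D^{1/\lambda}$ (coming from netting the $\lceil\lambda N\rceil^{-1}$-scale structure roughly $1/\lambda$ times) and rescales the net radius from $\alpha/D$ on $S^{I}$ to $\alpha/(D\sqrt\lambda)$ on $S^{N-1}$. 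Combining the per-$I$ count $(CD/\sqrt{\lambda N})^{\lceil\lambda N\rceil}$ with the $2^N$-type factor for choosing $I$ and the $D^{1/\lambda}$ lifting factor, and simplifying $(CD/\sqrt{\lambda N})^{\lambda N}\cdot 2^{O(N)}=[CD/(\lambda N)^{c}]^{N}$ after adjusting constants, yields the claimed bound $|\CN|\le [CD/(\lambda N)^c]^N D^{1/\lambda}$.

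The main obstacle I expect is not the lattice-point count (which is routine and available off the shelf from \cite{RV-rec}) but the bookkeeping in the lifting step: one must verify that approximating $\bar\Bx$ on the index set $I$ to within $\alpha/D$, while only knowing that $I$ is \emph{some} maximizing subset of $\spread(\Bx)$, still determines $\Bx$ globally to within $\alpha/(D\sqrt\lambda)$ on all of $S^{N-1}$ — this requires care because different $\Bx$'s may have different spread sets and different maximizers $I(\Bx)$, so one really nets over the pair (location of $I$, values on $I$) and then argues that incompressibility forbids two vectors sharing the same such data from being far apart. I would handle this by citing the analogous passage in \cite{V} (and its exposition in \cite[Appendix B]{NgTV}), checking only that the slightly different normalization here (we do not insist $\|\Bx\|_2\ge 1$, and $\lambda$ is allowed to be as small as $c/N$) does not break the estimates; the lower bound $\lambda>c/N$ is exactly what keeps $\lceil\lambda N\rceil\ge c$ so that the exponents $\lambda N$ and $1/\lambda$ are both meaningful.
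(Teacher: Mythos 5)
Your construction nets only the single maximizing block $I=I(\Bx)$ of size $\lceil\lambda N\rceil$ and then tries to ``lift'' this to a net on all of $S^{N-1}$, and this is where the argument breaks. Knowing $\Bx_I$ to precision $\alpha/D$ on one set of $\lceil\lambda N\rceil$ coordinates determines nothing about the remaining $(1-\lambda)N$ coordinates: two incompressible unit vectors can agree exactly on $I$ and be orthogonal on $I^c$, so there is no reconstruction mechanism, and incompressibility does not ``forbid two vectors sharing the same data from being far apart.'' Even if you patch this by netting $I^c$ with a trivial volumetric net at scale $\alpha/D$, the total count is $(CD)^{(1-\lambda)N}\cdot(CD/\sqrt{\lambda N})^{\lceil\lambda N\rceil}D^{O(1)}$, whose gain over the trivial bound is only $(\lambda N)^{-\lambda N/2}$. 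The lemma requires a gain of the form $(\lambda N)^{-cN}$ with $c$ independent of $\lambda$, which your scheme cannot produce when $\lambda$ is small.

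The point you are missing is that $\LCDhat_{\alpha,c}(\Bx,\lambda)$ is defined as a \emph{maximum} over all subsets $I\subseteq\spread(\Bx)$ of size $\lceil\lambda N\rceil$, so the hypothesis $\LCDhat\le D$ gives $\LCD_{\alpha,c}(\Bx_I/\|\Bx_I\|_2)\le D$ for \emph{every} such subset simultaneously, not just for the maximizer. The paper's proof exploits exactly this: it partitions the entire spread set (which has cardinality $\ge c_0c_1^2N/2$, proportional to $N$, by Claim \ref{claim:spread}) into $k_0\asymp 1/\lambda$ blocks $I_1,\dots,I_{k_0}$ of size $\lceil\lambda N\rceil$ each, applies the bounded-LCD net (Lemma \ref{lemma:net:ND:2}) of cardinality $(C_0D/\sqrt{\lambda N})^{|I_k|}D^2$ to every block, and covers the off-spread coordinates $I_0$ by a trivial $(1/D)$-net of size $(3D)^{|I_0|}$. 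Multiplying, the per-block gain $(\lambda N)^{-\lambda N/2}$ accumulates over the $\asymp 1/\lambda$ blocks to $(\lambda N)^{-\Theta(N)}$, which is the source of the factor $[CD/(\lambda N)^c]^N$, while the product of the $D^2$ correction factors over the $k_0$ blocks is precisely where $D^{O(1/\lambda)}$ comes from --- not, as you suggest, from a lifting step. The $2^N$ factor for the location of the spread set and the partition is absorbed as you describe, and the net radii add in quadrature over the $k_0+1$ blocks to give the final radius $O(\alpha/(D\sqrt{\lambda}))$.
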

\begin{definition}
Let $D_0\ge \gamma_0\sqrt{N}$. Define $S_{D_0}$ as 
$$S_{D_0}:=\{\Bx\in \Incomp: D_0 \le \LCD_{\alpha,c}(\Bx) \le 2D_0 \},$$
where $\gamma_0$ is a constant. 
\end{definition}

\begin{lemma}\cite[Lemma 4.7]{RV-rec}
There exists a $(4\alpha/D_0)$-net of $S_{D_0}$ of cardinality at most $(C_0D_0/\sqrt{N})^N$.
\end{lemma}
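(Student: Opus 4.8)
The plan is to follow the lattice-approximation argument of Rudelson and Vershynin. Fix $\Bx\in S_{D_0}$, so $\|\Bx\|_2=1$ and $D_0\le \LCD_{\alpha,c}(\Bx)\le 2D_0$. First I would unwind the definition of the least common denominator: since the infimum defining $\LCD_{\alpha,c}(\Bx)$ is at most $2D_0$, there is a scale $\theta$ with $D_0\le\theta\le 3D_0$ (the lower bound is forced by $\LCD_{\alpha,c}(\Bx)\ge D_0$; the upper bound allows a harmless constant slack in case the infimum is only approached, not attained) and an integer vector $\Bp\in\Z^N$ such that $\|\theta\Bx-\Bp\|_2<\alpha$. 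Since $\|\theta\Bx\|_2=\theta$ we get $\|\Bp\|_2\le\theta+\alpha\le 4D_0$, using that $\alpha$ is a fixed constant while $D_0\ge\gamma_0\sqrt N\to\infty$; moreover $\Bp\ne 0$, for otherwise $\theta=\|\theta\Bx\|_2<\alpha<D_0$.

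Next I would produce an honest net point out of $\Bp$. The candidate is the unit vector $\Bp/\|\Bp\|_2$. From $\|\theta\Bx-\Bp\|_2<\alpha$ and $\|\Bx\|_2=1$ we have $\big|\theta-\|\Bp\|_2\big|=\big|\,\|\theta\Bx\|_2-\|\Bp\|_2\,\big|<\alpha$, hence
\[
\left\|\frac{\Bp}{\theta}-\frac{\Bp}{\|\Bp\|_2}\right\|_2=\frac{\big|\,\|\Bp\|_2-\theta\,\big|}{\theta}<\frac{\alpha}{\theta}\le\frac{\alpha}{D_0},
\]
and since also $\|\Bx-\Bp/\theta\|_2<\alpha/\theta\le\alpha/D_0$, the triangle inequality puts $\Bx$ within $2\alpha/D_0\le 4\alpha/D_0$ of $\Bp/\|\Bp\|_2$. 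Therefore
\[
\CN:=\Big\{\,\Bp/\|\Bp\|_2\ :\ \Bp\in\Z^N\setminus\{0\},\ \|\Bp\|_2\le 4D_0\,\Big\}
\]
is a $(4\alpha/D_0)$-net of $S_{D_0}$. (The same computation applies to any unit vector with $\LCD_{\alpha,c}$ in $[D_0,2D_0]$, so incompressibility is not actually needed in this step.)

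It remains to bound $|\CN|$, which is at most the number of points of $\Z^N$ lying in the Euclidean ball of radius $4D_0$ about the origin. Translating the disjoint unit cubes $\Bp+[0,1)^N$ and comparing volumes, this count is at most the volume of the ball of radius $4D_0+\sqrt N$ centered at the origin; since $D_0\ge\gamma_0\sqrt N$ we have $4D_0+\sqrt N\le C D_0$ for a constant $C=C(\gamma_0)$, and the volume of a radius-$R$ ball in $\R^N$ is $(C'R/\sqrt N)^N$ by Stirling's formula with $C'$ absolute. Combining, $|\CN|\le (C_0 D_0/\sqrt N)^N$ for an appropriate $C_0=C_0(\gamma_0)$, as claimed.

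The routine ingredients here are the volume estimate and the constant bookkeeping. The only step that takes a little care is the first one: extracting the pair $(\theta,\Bp)$ with both a lower bound $\theta\ge D_0$ (from $\LCD\ge D_0$) and an upper bound on $\theta$ (from $\LCD\le 2D_0$, remembering that the infimum may be approached rather than attained), and then checking $\Bp\ne 0$ so that $\Bp/\|\Bp\|_2$ makes sense. Everything after that is two applications of the triangle inequality together with a lattice-point count.
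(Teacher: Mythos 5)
Your lattice-point argument is exactly the standard proof of \cite[Lemma 4.7]{RV-rec}, which the paper cites without reproving, and it is correct: extract $\theta\in[D_0,3D_0]$ and a nonzero $\Bp\in\Z^N$ with $\|\theta\Bx-\Bp\|_2<\alpha$, approximate $\Bx$ by $\Bp/\|\Bp\|_2$ via two triangle inequalities, and count lattice points in a ball of radius $O(D_0)$ by a volume comparison. One small caveat: in the intended parameter regime $\alpha$ is taken proportional to $\sqrt N$ (e.g.\ $\alpha=\mu\sqrt N$ with $\mu$ small) rather than a fixed constant, but since $\alpha\le(\mu/\gamma_0)D_0$ the bounds $\|\Bp\|_2\le CD_0$ and $\theta>\alpha$ still hold and the discrepancy is absorbed into $C_0$.
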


One can in fact obtain a more general form as follows .
\begin{lemma}\label{lemma:moregeneral}
Let $c\in (0,1)$ and $D\ge D_0\ge c \sqrt{N}$. Then the set $S_{D_0}$ has a $(4\alpha/D)$-net of cardinality at most $(C_0D/\sqrt{N})^N$.
\end{lemma}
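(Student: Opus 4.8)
The plan is to deduce this from the preceding lemma (the $(4\alpha/D_0)$-net of $S_{D_0}$ of cardinality at most $(C_0D_0/\sqrt N)^N$, i.e.\ \cite[Lemma 4.7]{RV-rec} as stated above) by a routine volumetric refinement. First I would invoke that lemma — applicable since $D_0\ge c\sqrt N$ — to fix a $(4\alpha/D_0)$-net $\CN_0$ of $S_{D_0}$ with $|\CN_0|\le (C_0D_0/\sqrt N)^N$; thus every $\Bx\in S_{D_0}$ lies within Euclidean distance $4\alpha/D_0$ of some $\Bx_0\in\CN_0$. Since $D\ge D_0$, the target radius $4\alpha/D$ is no larger than $4\alpha/D_0$, so it only remains to subdivide $\CN_0$ at the finer scale.

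For each $\Bx_0\in\CN_0$ I would cover the Euclidean ball $B(\Bx_0,4\alpha/D_0)\subset\R^N$ by balls of radius $4\alpha/D$. By the standard volumetric bound on covering numbers of Euclidean balls, at most
$$\Big(1+\frac{2\,(4\alpha/D_0)}{4\alpha/D}\Big)^N=\Big(1+\frac{2D}{D_0}\Big)^N\le\Big(\frac{3D}{D_0}\Big)^N$$
such balls suffice, with centers chosen so that every point of $B(\Bx_0,4\alpha/D_0)$ is within $4\alpha/D$ of one of them. Let $\CN$ be the union over $\Bx_0\in\CN_0$ of all these centers. Any $\Bx\in S_{D_0}$ lies in some $B(\Bx_0,4\alpha/D_0)$ and hence within $4\alpha/D$ of a point of $\CN$, so $\CN$ is a $(4\alpha/D)$-net of $S_{D_0}$, and
$$|\CN|\le|\CN_0|\cdot\Big(\frac{3D}{D_0}\Big)^N\le\Big(\frac{C_0D_0}{\sqrt N}\Big)^N\Big(\frac{3D}{D_0}\Big)^N=\Big(\frac{3C_0D}{\sqrt N}\Big)^N.$$
Relabelling the constant $3C_0$ as $C_0$ (permitted by our convention on constants) yields the claimed bound.

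There is essentially no genuine obstacle here: all of the content is carried by the cited $(4\alpha/D_0)$-net (whose proof rounds $\theta_\Bx\Bx$ to the nearest lattice point for $\theta_\Bx\asymp D_0$ and counts lattice points in a ball of radius $O(D_0)$, using $D_0\ge c\sqrt N$), and the present lemma is merely a rescaled packaging of it. The only two points requiring a little care are (i) the hypothesis $D\ge D_0$, which makes the refinement count $(3D/D_0)^N\ge 1$ meaningful and keeps the covering-number bound valid, and (ii) the bookkeeping of multiplicative constants, which our blanket convention on constants absorbs. (One could instead reprove the statement from scratch by the same lattice-rounding argument, but this gains nothing.)
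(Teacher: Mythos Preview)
Your proposal is correct and follows essentially the same approach as the paper: invoke the preceding $(4\alpha/D_0)$-net of cardinality $(C_0D_0/\sqrt N)^N$, then refine each ball of radius $4\alpha/D_0$ into at most $(O(D/D_0))^N$ balls of radius $4\alpha/D$ via the standard volumetric covering bound, and multiply. The paper writes the refinement constant as $5$ rather than $3$, but this is immaterial.
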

\begin{proof} First, by the lemma above one can cover $S_{D_0}$ by $(C_0D_0/\sqrt{N})^N$ balls of radius $4\alpha/D_0$. We then cover these balls by smaller balls of radius $4\alpha/D$, the number of such small balls is at most $(5D/D_0)^N$. Thus in total there are at most $(20C_0D/\sqrt{N})^N$ balls in total.
\end{proof}
Now we put the nets together over dyadic intervals.
\begin{lemma}\label{lemma:net:ND:1}
Let $c\in (0,1)$ and $D\ge c \sqrt{N}$. Then the set $\{X\in \Incomp(c_0,c_1): c\sqrt{N} \le \LCD_{\alpha,c}(X) \le D \}$ has a $(4\alpha/D)$-net of cardinality at most $(C_0D/\sqrt{N})^N \log_2 D$.
\end{lemma}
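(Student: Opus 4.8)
The plan is to reduce to Lemma~\ref{lemma:moregeneral} by a dyadic decomposition of the range of admissible values of $\LCD_{\alpha,c}(X)$. Write $L:=\{X\in\Incomp(c_0,c_1): c\sqrt N\le \LCD_{\alpha,c}(X)\le D\}$ and set $K:=\lfloor \log_2(D/(c\sqrt N))\rfloor$, which is well defined and nonnegative since $D\ge c\sqrt N$. For $0\le k\le K$ put $D_0^{(k)}:=2^k c\sqrt N$, so that $c\sqrt N=D_0^{(0)}\le D_0^{(1)}\le\dots\le D_0^{(K)}\le D$ and the windows $[D_0^{(k)},2D_0^{(k)}]$ cover $[c\sqrt N,2^{K+1}c\sqrt N]\supseteq[c\sqrt N,D]$. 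Consequently every $X\in L$ satisfies $\LCD_{\alpha,c}(X)\in[D_0^{(k)},2D_0^{(k)}]$ for some $k$, i.e.\ $X\in S_{D_0^{(k)}}$, and hence $L\subseteq\bigcup_{k=0}^K S_{D_0^{(k)}}$.

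First I would apply Lemma~\ref{lemma:moregeneral} at each dyadic level separately, always with the common finest scale $D$. For each $k$ we have $D\ge D_0^{(k)}\ge c\sqrt N$, so the lemma produces a $(4\alpha/D)$-net $\CN_k$ of $S_{D_0^{(k)}}$ with $|\CN_k|\le (C_0D/\sqrt N)^N$. Setting $\CN:=\bigcup_{k=0}^K\CN_k$ then yields a $(4\alpha/D)$-net of $L$: any $X\in L$ lies in some $S_{D_0^{(k)}}$, hence within distance $4\alpha/D$ of a point of $\CN_k\subseteq\CN$.

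It then remains to count. We have $|\CN|\le\sum_{k=0}^K|\CN_k|\le (K+1)(C_0D/\sqrt N)^N$, and $K+1\le \log_2(D/(c\sqrt N))+1\le\log_2 D$ once $c\sqrt N\ge 2$ (so, for $N$ large, or else absorbing a bounded factor into $C_0$). This gives $|\CN|\le (C_0D/\sqrt N)^N\log_2 D$, as claimed.

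There is essentially no real obstacle here: the argument is a routine dyadic pigeonhole layered on top of Lemma~\ref{lemma:moregeneral}. The only points requiring a little care are (i) choosing $K$ so that the dyadic windows $[D_0^{(k)},2D_0^{(k)}]$ genuinely cover $[c\sqrt N,D]$ while each base scale $D_0^{(k)}$ still lies in $[c\sqrt N,D]$, which is precisely what permits invoking Lemma~\ref{lemma:moregeneral} with the single fine scale $4\alpha/D$ at every level; and (ii) absorbing the harmless additive/multiplicative constant coming from the bound $K+1$ versus $\log_2 D$ into the absolute constant $C_0$.
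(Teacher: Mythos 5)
Your proof is correct and is exactly the argument the paper intends: the lemma is introduced with the phrase ``we put the nets together over dyadic intervals,'' i.e.\ a dyadic decomposition of $[c\sqrt N,D]$ into the sets $S_{D_0}$, an application of Lemma~\ref{lemma:moregeneral} at each level with the common scale $4\alpha/D$, and a union bound over the $O(\log_2 D)$ levels. Nothing is missing.
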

Notice that in the above lemmas, $\|\Bx\|_2 \ge 1$ was assumed implicitly. Using the trivial bound $\log_2 D (D/\alpha) \le D^2$, we arrive at
\begin{lemma}\label{lemma:net:ND:2}
Let $c\in (0,1)$ and $D\ge c \sqrt{N}$. Then the set $\{\Bx\in \Incomp(c_0,c_1): c\sqrt{N} \le \LCD_{\alpha,c}(Bx/\|\Bx\|_2) \le D \}$ has a $(4\alpha/D)$-net of cardinality at most $(C_0D/\sqrt{N})^N D^2$.
\end{lemma}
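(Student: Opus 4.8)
The plan is to obtain Lemma \ref{lemma:net:ND:2} as an immediate corollary of Lemma \ref{lemma:net:ND:1}. Since $\Incomp(c_0,c_1)\subset S^{N-1}$, every $\Bx$ in the set under consideration is already a unit vector, so $\Bx/\|\Bx\|_2=\Bx$ and the set in Lemma \ref{lemma:net:ND:2} is literally the set $\{X\in\Incomp(c_0,c_1): c\sqrt N\le \LCD_{\alpha,c}(X)\le D\}$ appearing in Lemma \ref{lemma:net:ND:1}; writing the normalization is only a reminder that the $\LCD$ is evaluated at the unit representative. Lemma \ref{lemma:net:ND:1} therefore already produces a $(4\alpha/D)$-net $\CN$ of this set with $|\CN|\le (C_0D/\sqrt N)^N\log_2 D$, and the $(4\alpha/D)$ mesh is inherited unchanged.

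It then only remains to replace the prefactor $\log_2 D$ by $D^2$. Because $D\ge c\sqrt N\ge 1$, the trivial estimate $\log_2 D\le D\le D^2$ holds, and in fact $\log_2 D\cdot(D/\alpha)\le D^2$ once $N$ is large enough that $D\ge c\sqrt N$ dominates the fixed constant $\alpha^{-1}$; hence $(C_0D/\sqrt N)^N\log_2 D\le (C_0D/\sqrt N)^N D^2$, which is exactly the claimed bound. The reformulation is purely cosmetic but convenient: in the union-bound proof of Theorem \ref{theorem:structure} the relevant range of $D$ is at most $N^{O(1/\lambda)}$, so a polynomial prefactor such as $D^2$ is absorbed harmlessly into the $\exp(-cN)$ budget while being easier to propagate through the subsequent computations than $\log_2 D$ would be.

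I expect no genuine obstacle here: all of the real work has already been extracted — the dyadic decomposition of the range of the $\LCD$, together with the per-block count $(C_0D/\sqrt N)^N$ coming from \cite[Lemma 4.7]{RV-rec} via Lemma \ref{lemma:moregeneral}, is precisely Lemma \ref{lemma:net:ND:1}. The only point deserving a word of care is the bookkeeping one flagged just before the statement, namely that throughout these net lemmas $\|\Bx\|_2\ge 1$ is assumed implicitly, which is consistent with restricting attention to unit incompressible vectors; once that is noted, the passage from $\log_2 D$ to $D^2$ is immediate and the lemma follows.
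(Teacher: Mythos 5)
Your proposal is correct and matches the paper's own (one-line) derivation: the paper also obtains Lemma \ref{lemma:net:ND:2} directly from Lemma \ref{lemma:net:ND:1} via the trivial bound $\log_2 D\,(D/\alpha)\le D^2$, the extra factor $D/\alpha$ being there only to absorb a net over the norm when the lemma is later applied to non-normalized restrictions $\Bx_{I_k}$, a point you also note. Nothing further is needed.
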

\begin{proof}(of Lemma \ref{lemma:structure}) Write $\Bx=\Bx_{I_0}\cup \spread(\Bx)$, where $\spread(\Bx)= I_1\cup \dots \cup I_{k_0} \cup J$ such that $|I_k| =\lambda N$ and $|J|\le \lambda N$.  
Notice that we trivially have 
$$|\spread(x)| \ge |I_1\cup \dots \cup I_{k_0}| = k_0 \lceil \alpha n \rceil \ge |\spread(x)|- \alpha n \ge c'n/2.$$ 
Thus we have 
$$ \frac{c'}{2\alpha} \le k_0 \le \frac{2c'}{\alpha}.$$
In the next step, we will construct nets for each $x_{I_j}$. For $x_{I_0}$, we construct trivially a $(1/D)$-net $\CN_0$ of size 
$$|\CN_0| \le (3D)^{|I_0|}.$$
For each $I_k$, as 
$$\LCD_{\kappa,\gamma}(x_{I_k}/\|x_{I_k}\|)\le \LCDhat_{\kappa,\gamma}(x) \le D,$$
by Lemma \ref{lemma:net:ND:2} (where the condition $\LCD_{\kappa,\gamma}(x_{I_k}/\|x_{I_k}\|)\gg \sqrt{|I_k|}$ follows the standard Littlewood-Offord estimate because the entries of $x_{I_k}/\|x_{I_k}$ are all of order $\sqrt{\alpha n}$ while $\kappa=o(\sqrt{\alpha n})$), one obtains a $(2\kappa/D)$-net $\CN_k$ of size 
$$|\CN_k|\le  \left(\frac{C_0D}{\sqrt{|I_k|}}\right)^{|I_k|} D^2.$$ 
Combining the nets together, as $x=(x_{I_0},x_{I_1},\dots,x_{I_{k_0}},x_J)$ can be approximated by $y=(y_{I_0},y_{I_1},\dots,y_{I_{k_0}},y_J)$ with $\|x_{I_j}-y_{I_j}\|\le \frac{2\kappa}{D}$, we have
$$\|x-y\|\le \sqrt{k_0+1}\frac{2\kappa}{D} \ll \frac{\kappa}{\sqrt{\alpha} D}.$$
As such, we have obtain a $\beta$-net  $\CN$, where $\beta=O(\frac{\kappa}{\sqrt{\alpha} D})$,  of size 
$$|\CN| \le 2^n |\CN_0| |\CN_1| \dots |\CN_{k_0}| \le 2^n (3D)^{|I_0|} \prod_{k=1}^{k_0} \left(\frac{CD}{\sqrt{|I_k|}}\right)^{|I_k|} D^2.$$
This can be simplified to 
$$|\CN|\le \frac{(CD)^n}{\sqrt{\alpha n}^{c'n/2}}D^{O(1/\alpha)}.$$
\end{proof}

Now we  complete the proof of Theorem \ref{theorem:structure} owing to Lemma \ref{lemma:structure} and the following bound for any fixed $\Bx$.
\begin{lemma}\cite[Proposition 6.11]{V}\label{lemma:structure:1} 
Let $\Bx\in \Incomp(c_0,c_1)$ and $\lambda\in (0,c_{\ast})$. Then for any $\ep>1/\LCDhat_{\alpha,\gamma}(\Bx)$ one has
$$\CL(B^T\Bx,\ep \sqrt{N}) \le (\frac{\ep}{\gamma \sqrt{\lambda}} +\exp(-\alpha^2))^{N-\lambda N}.$$
\end{lemma}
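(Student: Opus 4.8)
The plan is to exploit the block structure of $B$ forced by the symmetry of $A$. Write $\Bx=(\Bu,\Bv)$ with $\Bu\in\R^{m}$, $\Bv\in\R^{n}$. Removing the first $m$ columns of $A$ leaves $B=\begin{pmatrix}D\\ G\end{pmatrix}$, where $D$ is the $m\times n$ block with entries $a_{i,m+j}$ ($i\le m$) — these are strictly above the diagonal of $A$, hence mutually independent and independent of everything else — and $G$ is the $n\times n$ \emph{symmetric} block with $G_{i'j}=a_{m+i',m+j}$. Thus $B^T\Bx=D^T\Bu+G\Bv$ and the $j$-th coordinate of $B^T\Bx$ is $\sum_{i\le m}u_i a_{i,m+j}+\sum_{i'\le n}v_{i'}a_{m+i',m+j}$.

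Next I would introduce the witness set for the regularized LCD. Let $I=I(\Bx)\subseteq\spread(\Bx)$ with $|I|=\lceil\lambda N\rceil$ be the maximizing set, so $\LCD_{\alpha,\gamma}(\Bx_I/\|\Bx_I\|_2)=\LCDhat_{\alpha,\gamma}(\Bx,\lambda)$; since $|x_k|\asymp 1/\sqrt N$ on $\spread(\Bx)$ we have $\|\Bx_I\|_2\asymp\sqrt\lambda$. Split $I=(I\cap[m])\cup(m+I')$ with $I'\subseteq[n]$ and set $J:=[n]\setminus I'$, so that $|J|\ge n-|I'|\ge(1-\lambda)N-O(m)$; since $m$ is at most polylogarithmic while $\lambda\ge N^{-\lambda_0}$, this loss is negligible.

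The crux is a decoupling step. Condition on all entries of $A$ \emph{except} the family $\{a_{k,m+j}:k\in I,\ j\in J\}$. Because $J\cap I'=\emptyset$, this family consists of pairwise distinct off-diagonal entries of $A$: the part $k\le m$ lies in $D$, and the part $k>m$ lies in a rectangular off-diagonal sub-block of the symmetric matrix $G$. Hence these entries are jointly independent and independent of the conditioning, and under the conditioning each coordinate $(B^T\Bx)_j$, $j\in J$, equals $c_j+\sum_{k\in I}x_k\zeta_{kj}$ with $c_j$ deterministic and $\zeta_{kj}$ i.i.d.\ copies of $\xi$, and these coordinates are \emph{mutually independent} over $j\in J$. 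This is the step I expect to be the main obstacle, and it is exactly where the symmetry of $A$ bites: without choosing the output coordinates $J$ disjoint from $I'$, the $(G\Bv)_j$ would share entries $G_{jj'}$ and would not be independent. Passing to a rectangular off-diagonal sub-block of a symmetric matrix is the standard resolution (Vershynin's device in the symmetric invertibility problem).

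The remainder is routine. Normalizing by $\|\Bx_I\|_2\asymp\sqrt\lambda$ and applying Theorem \ref{theorem:RV:1} to the unit vector $\Bx_I/\|\Bx_I\|_2$, whose $\LCD_{\alpha,\gamma}$ equals $\LCDhat_{\alpha,\gamma}(\Bx,\lambda)>1/\eps$, gives, for every fixed $\Bu$ and every $j\in J$,
\[
\sup_{u_j}\P\big(|(B^T\Bx)_j-u_j|\le c\eps\big)\ \le\ C\Big(\tfrac{\eps}{\gamma\sqrt\lambda}+e^{-\alpha^2}\Big),
\]
where the hypothesis $\eps'\ge 1/\LCD$ of Theorem \ref{theorem:RV:1} is checked using $\eps' \asymp \eps/\sqrt\lambda \ge \eps>1/\LCDhat_{\alpha,\gamma}(\Bx,\lambda)$. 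Since $\sum_{j\in J}\big((B^T\Bx)_j-u_j\big)^2\le\|B^T\Bx-\Bu\|_2^2$ and the coordinates indexed by $J$ are conditionally independent, the tensorization property of the Levy concentration function yields
\[
\P\big(\|B^T\Bx-\Bu\|_2\le\eps\sqrt N\ \big|\ \text{conditioning}\big)\ \le\ \Big(C\big(\tfrac{\eps}{\gamma\sqrt\lambda}+e^{-\alpha^2}\big)\Big)^{|J|}.
\]
Averaging over the conditioning and taking the supremum over $\Bu$, then absorbing the constant and the negligible $O(m)$ deficit in the exponent (harmless in the union bound over the net of Lemma \ref{lemma:structure} where this lemma is applied), we obtain $\CL(B^T\Bx,\eps\sqrt N)\le\big(\tfrac{\eps}{\gamma\sqrt\lambda}+e^{-\alpha^2}\big)^{N-\lambda N}$, as desired.
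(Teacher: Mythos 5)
Your argument is correct and is essentially the proof of the cited source (\cite[Proposition 6.11]{V}, which the paper quotes without reproving): the decoupling onto the rectangular off-diagonal family $\{a_{k,m+j}:k\in I(\Bx),\,j\in J\}$ with $J$ disjoint from $I'$, followed by the one-dimensional LCD small-ball bound for $\Bx_I/\|\Bx_I\|_2$ and tensorization over the conditionally independent coordinates in $J$, is exactly Vershynin's device. The only slack is the exponent $|J|\ge N-\lambda N-O(m)$ and the absorbed constant $C$ versus the stated $N-\lambda N$, which you correctly flag as harmless in the union bound where the lemma is used.
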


\begin{proof} Assume that $D\le N^{c/\gamma}$. Then with $\beta = \alpha/(D\lambda)\ge 1/D$, by a union bound
\begin{align*}
\P\Big(\exists \By_0\in H \subset S_D, \|B^T\By_0 -u\|_2 \le \beta \sqrt{N}\Big) \le \Big(\frac{\ep}{\gamma \sqrt{\lambda}} +\exp(-\alpha^2)\Big)^{N-\lambda N} \times \frac{(CD)^N}{\sqrt{\lambda n}^{c_{\ast}N/2}}D^{2/\lambda} = N^{-cN}.
\end{align*}
This completes the proof of our theorem.
\end{proof}

\section{Application: proof of Corollary \ref{cor:normal}}
 
For short, denote by $B$ the $(N-1)\times N$ matrix generated by $\row_2(A),\dots,\row_N(A)$.  We will follow the approach of \cite{RV-del, NgV-linear}.  Let $I=\{i_1,\dots,i_{m-1}\}$ be any subset of size $m-1$ of $\{2,\dots,N\}$, and let $H$ be the subspace generated by the remaining columns of $B$. Let $P_H$ be the projection from $\R^{N-1}$ onto the orthogonal complement $H^\perp$ of $H$. For now we view $P_H$ as an idempotent matrix of size $(N-1) \times (N-1)$, $P_H^2=P_H$. It is known (see for instance \cite{V,Ng-sym}) that with probability $1-\exp(-N^c)$ we have $\dim(H^\perp)=m-1$. So without loss of generality we assume $\tr(P_H)=m-1$.

Recall that by definition,
\begin{equation}\label{eqn:starting}
x_1 \col_1(B)+ x_{i_1} \col_{i_1}(B) +\dots +x_{i_{m-1}} \col_{i_{m-1}}(B) + \sum_{i\notin \{1,i_1,\dots,i_{m-1}\}} x_i \col_i (B)=0.
\end{equation}
Thus, projecting onto $H^\perp$ would then yield
$$x_1P_H \col_1(B)+ x_{i_1} P_H \col_{i_1}(B) +\dots +x_{i_{m-1}} P_H \col_{i_{m-1}}(B) = 0.$$
It follows that 
\begin{equation}\label{eqn:x_1'}
|x_1| \|P_H( \col_1(B))\|_2 = \|x_{i_1} P_H(\col_{i_1}(B)) +\dots +x_{i_{m-1}} P_H(\col_{i_{m-1}}(B))\|_2 .\end{equation}
Now if $m=C\log n$ with sufficiently large $C$, then by Theorem \ref{thm:distance:dependent} the following holds  with overwhelming probability (that is greater than $1-O(n^{-C})$ for any given $C$)
$$ \|P_H \col_{1}(B)\|_2 \asymp \sqrt{m} \mbox{ and } \|P_H \col_{i_j}(B) \|_2 \asymp \sqrt{m}, 1\le j\le m-1;$$
and hence trivially
$$  |\col_{i_{j_1}}^T(B) P_H \col_{i_{j_2}}(B)| \ll m, j_1\neq j_2.$$
Let $\CE_{I}$ be this event, on which by Cauchy-Schwarz we can bound the square of the RHS of \eqref{eqn:x_1'} by
$$ \|x_{i_1} P_H(\col_{i_1}) +\dots +x_{i_{m-1}} P_H(\col_{i_{m-1}})\|_2^2 \ll m  (\sum_{j=1}^{m-1} x_{i_j}^2) + m^{2}  (\sum_{j=1}^{m-1} x_{i_j}^2).$$
Thus we obtain
\begin{equation}\label{eqn:x_1}
|x_1| \ll m^{1/2}  (\sum_{j=1}^{m-1} x_{i_j}^2)^{1/2}.
\end{equation}

Now let $I_1,\dots, I_{\frac{n-1}{m-1}}$ be any partition of $\{2,\dots,n\}$ into subsets of size $m-1$ each (where for simplicity we assume $m-1|n-1$). Set 
$$\CE := \wedge_{1\le j\le \frac{n-1}{m-1} }\CE_{I_j}.$$
By a union bound, $\CE$ holds with overwhelming probability. Furthermore, it follows from \eqref{eqn:x_1} that on $\CE$, 
$$|x_1| \le \min \Big \{ m^{1/2} (\sum_{i\in I_j} x_i^2)^{1/2}, 1 \le j \le   \frac{n-1}{m-1}  \Big \}.$$
But as $\sum_j \sum_{i\in I_j} x_i^2 = 1-x_1^2 <1$, by the pigeon-hole principle  
$$\min \{ \sum_{i\in I_j} x_i^2, 1 \le j \le  \frac{n-1}{m-1}  \} \le \frac{m-1}{n-1}.$$ 
Thus conditioned on $\CE$,
$$|x_1| \le m^{1/2} \sqrt{\frac{m-1}{n-1}}  = O(\frac{(\log n)^{3/2}}{\sqrt{n}}).$$
The claim then follows by Bayes' identity.

\appendix

\section{Proof of Lemma \ref{lemma:manyvalues'}}\label{section:interval}

We can rely on the powerful concentration result of eigenvalues inside the bulk for random Wigner matrices from \cite{ESY}, \cite{TVuniversality} or \cite{EYY}. 

\begin{theorem}\label{theorem:concentration} Let $A$ be a random Wigner matrix as in Theorem \ref{thm:distance:independent}. Let $\eps,\delta$ be given positive constants. Then there exists a positive constant $\kappa$ such that the following holds with probability at least $1-N^{-\omega(1)}$: let $I$ be any interval of length $\log^{\kappa^{-1}}N/\sqrt{N}$ inside $[0,2-\eps]$, then  the number $N_I$ of eigenvalues $\lambda_i$ with modulus $|\lambda_i| \in I$ is well concentrated
$$|N_I - \int_{x\in I} \rho_{qc}(x)dx |\le \delta \sqrt{N} I,$$
where $\rho_{qc}(x) = \frac{2}{\pi}1_{x\in [0,2]} \sqrt{4-x^2}$ is the quarter-circle density.
\end{theorem}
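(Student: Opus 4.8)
The plan is to read this off the local semicircle law and eigenvalue rigidity for Wigner matrices; under the sub-gaussian hypothesis on $\xi$ these are exactly the inputs of \cite{ESY, EYY, TVuniversality}, and what remains is the reduction and the choice of $\kappa$.

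First I would record the rigidity estimate I want. Write $\mu_1\ge\cdots\ge\mu_N$ for the eigenvalues of $A$ in the normalization of the statement (bulk on $[-2,2]$) and let $\rho_{sc}(x)=\frac{1}{2\pi}\sqrt{(4-x^2)_+}$. The bulk local law, equivalently the bulk rigidity estimate of \cite{EYY}, gives for each fixed $\eps_1\in(0,2)$ a constant $C$ and an event of probability $1-N^{-\omega(1)}$ on which
\begin{equation}\label{eq:rig}
\Big|\#\{i:\mu_i\in[E_1,E_2]\}-N\int_{E_1}^{E_2}\rho_{sc}(x)\,dx\Big|\le C\log^{C}N
\end{equation}
holds simultaneously for all $E_1\le E_2$ in $[-2+\eps_1,2-\eps_1]$. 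Only finitely many moments of $\xi$ enter here, so sub-gaussianity is far more than enough. The literature supplies \eqref{eq:rig} for one fixed pair $(E_1,E_2)$; the uniform version is routine, since the counting function jumps only when an endpoint meets an eigenvalue, so it suffices to take $E_1,E_2$ in a net of mesh $N^{-10}$ and union-bound over the $N^{O(1)}$ pairs, which does not spoil the probability $1-N^{-\omega(1)}$.

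Next I would set $\eps_1:=\eps/2$, take $C$ from \eqref{eq:rig}, and choose $\kappa$ small enough that $\kappa^{-1}>C$ (and below any absolute thresholds that arise); this is where $\kappa$ comes to depend on $\eps$ and on the sub-gaussian parameter. On the event of \eqref{eq:rig}, the eigenvalues counted by $N_I$ for a window $I\subseteq[0,2-\eps]$ of the prescribed length are precisely those with $\mu_i\in I\cup(-I)$ (after rescaling $I$ to the normalization above if needed); these two intervals are disjoint when the left endpoint of $I$ is positive and meet only at $0$ otherwise, and $0$ carries no eigenvalue with overwhelming probability. Since $I\subseteq[0,2-\eps]$, both $I$ and $-I$ lie in $[-2+\eps_1,2-\eps_1]$, so \eqref{eq:rig} applied to each, together with evenness of $\rho_{sc}$, shows that $N_I$ agrees with the quarter-circle prediction $\int_{x\in I}\rho_{qc}(x)\,dx$ of the statement — $\rho_{qc}$ being the law of $|\mu|$ when $\mu$ follows the semicircle law — up to an additive error of at most $2C\log^{C}N$. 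Because $\kappa^{-1}>C$, for large $N$ this error is at most $\delta\log^{\kappa^{-1}}N=\delta\sqrt{N}\,|I|$, which is exactly the asserted bound; a final union bound over the $N^{O(1)}$ relevant windows completes the proof.

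The one genuinely essential point is to quote rigidity in the sharp form \eqref{eq:rig}, with an error that is polylogarithmic rather than a power of $N$: a bound of shape $N^{c}$ would be useless here, since the target error $\delta\sqrt{N}\,|I|=\delta\log^{\kappa^{-1}}N$ is only polylogarithmic and no choice of $\kappa$ could absorb $N^{c}$ into it. Everything after that — folding the spectrum, matching the normalization so that the count lines up with $\int_{x\in I}\rho_{qc}$, and the discretization that makes the estimate uniform in $I$ — is bookkeeping.
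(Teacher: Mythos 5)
Your proposal is correct and follows essentially the same route as the paper: the paper offers no proof of this theorem at all, simply quoting it as a known consequence of the local semicircle law / bulk rigidity results of \cite{ESY}, \cite{TVuniversality} and \cite{EYY}, and your reduction to bulk rigidity plus the folding of the spectrum onto $[0,2]$ and the net/union-bound to get uniformity in $I$ is precisely the bookkeeping the paper leaves implicit. The only point worth flagging is that you should quote the rigidity error in a form genuinely bounded by a \emph{fixed} power of $\log N$ (the original \cite{EYY} statement has $(\log N)^{C\log\log N}$, which no constant $\kappa$ absorbs into $\delta\log^{\kappa^{-1}}N$), but this normalization issue is inherited from the theorem statement itself rather than introduced by your argument.
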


As a consequence, with probability at least $1-N^{-\omega(1)}$, for any  $\log^{\kappa'}N \ll m \ll N$, any interval $[x_0+C_1m/N^{1/2},x_0+C_2m/N^{1/2}], x_0\ge 0$ inside the bulk contains at least $2m$ and at most $C' m$ singular values of $A$, where $C_1,C_2,C'$ depend on $\delta,\eps,K_0$. Lemma \ref{lemma:manyvalues'} then can be obtained by iterating the Cauchy interlacing law for eigenvalues of submatrices of Hermitian matrices, noting if $A_2$ is obtained from $A_1$ by removing one of its row, then $A_2A_2^\ast$ is a principle submatrix of $A_1A_1^\ast$.

{\bf Acknowledgments.} The author is grateful to the anonymous referees for many invaluable corrections and suggestions that help substantially improve the presentation of the note.

\end{document}